\newtheorem{ex}{Example}
\newtheorem{thrm}{Theorem}[section]
\newtheorem{cor}[thrm]{Corollary}
\newtheorem{lemma}[thrm]{Lemma}
\newtheorem{prop}[thrm]{Proposition}
\newtheorem{question}[thrm]{Question}
\newtheorem*{claim}{Claim}
\theoremstyle{definition}
\newtheorem{defin}[thrm]{Definition}
\newtheorem{rem}[thrm]{Remark}
\newtheorem*{xrem}{Remark}
\DeclareMathOperator{\Int}{Int}
\DeclareMathOperator{\supp}{supp}
\DeclareMathOperator{\rng}{rng}
\newcommand{\RR}{\mathbb{R}}
\newcommand{\la}{\langle}
\newcommand{\ra}{\rangle}
\begin{document}

\title{Some examples concerning $L\Sigma(\leq\omega)$ and metrizably fibered compacta}
\author{Antonio Avil\'es}
\address{Universidad de Murcia, Departamento de Matem\'{a}ticas, Campus de Espinardo 30100 Murcia, Spain.} \email{avileslo@um.es}
\author{Miko\l aj Krupski}
\address{Universidad de Murcia, Departamento de Matem\'{a}ticas, Campus de Espinardo 30100 Murcia, Spain\\ and \\ Institute of Mathematics\\ University of Warsaw\\ ul. Banacha 2\\
02--097 Warszawa, Poland }
\email{mkrupski@mimuw.edu.pl}

\begin{abstract}
The class of $L\Sigma(\leq\omega)$-spaces was introduced in 2006 by Kubi\'s, Okunev and Szeptycki as a natural refinement of the classical and important notion of Lindel\"of $\Sigma$-spaces. Compact $L\Sigma(\leq\omega)$-spaces were considered earlier, under different names, in the works of Tkachuk and Tkachenko in relation to metrizably fibered compacta. In this paper we give counterexamples to several open questions about compact $L\Sigma(\leq\omega)$-spaces that are scattered in the literature. Among other things, we refute a conjecture of Kubi\'s, Okunev and Szeptycki by constructing a separable Rosenthal compactum which is not an $L\Sigma(\leq\omega)$-space. We also give insight to the structure of first-countable $(K)L\Sigma(\leq\omega)$-compacta.
\end{abstract}

\subjclass[2010]{54D30, 26A21 54F05, 54C60}

\keywords{$L\Sigma(\leq\omega)$, $KL\Sigma(\leq\omega)$, metrizably fibered, weakly metrizably fibered, Rosenthal compact, Baire class one functions, Corson compact, Gul'ko compact, $\mathcal{E}_2(\aleph_1)$}

\maketitle

\section{Introduction}

All spaces under consideration are assumed to by Tychonoff.
Given a class $\mathcal{K}$ of compact spaces, Kubi\'s, Okunev and Szeptycki introduced in \cite{KOS} the following refinement of the classical notion of a Lindel\"of $\Sigma$-space: We say that a space $X$ is an \textit{$L\Sigma(\mathcal{K})$-space} if there is a separable metrizable space $M$ and
a compact-valued upper semicontinuous onto map $p:M\to X$ with $p(z)\in \mathcal{K}$, for all $z\in M$.
It is well known that $X$ is a Lindel\"of $\Sigma$-space if and only if $X$ is an $L\Sigma(\mathcal{K})$-space, where $\mathcal{K}$ is the class of all compact spaces (see \cite{Tkachuk1}).

In this paper we are mainly concerned with the case when $\mathcal{K}$ consists of compact metrizable spaces, i.e. compact spaces of countable weight. Following \cite{KOS}, we write $L\Sigma(\leq\omega)$ to denote this particular subclass of Lindel\"of $\Sigma$-spaces. More specifically, we say that $X$ is an \textit{$L\Sigma(\leq\omega)$-space} if $X$ is an $L\Sigma(\mathcal{K})$-space where $\mathcal{K}$ is the class of compact metrizable spaces.

Although the systematic study of $L\Sigma(\leq\omega)$-spaces was initiated in \cite{KOS}, compact $L\Sigma(\leq\omega)$-spaces were investigated earlier, independently by Tkachuk \cite{Tkachuk} (under the name \textit{weakly metrizably fibered spaces})  and Tkachenko \cite{Tkachenko} (under the name \textit{metrizably-approximable spaces}), as a natural generalization of so-called metrizably fibered spaces.

The aim of the present paper is to give counterexamples to several open question about compact $L\Sigma(\leq\omega)$-spaces scattered in the literature.
Let us describe our main results along with some motivations behind them.

\begin{ex}\label{Example A}
There is a separable Rosenthal compact space $KA$ which is not an $L\Sigma(\leq\omega)$-space.
\end{ex}

A compact space $K$ is a \textit{Rosenthal compact space} if $K$ is homeomorphic to a subspace of the space $B_1(X)$ of Baire class one functions on a Polish space $X$, equipped with the pointwise topology. It is was proved in \cite[Proposition 2.14]{KOS}  that if $K$ is homeomorphic to a subspaces of $B_1(X)$ consisting of functions with countably many discontinuities, then $K$ is an $L\Sigma(\leq\omega)$-space. This result motivated Kubi\'s, Okunev and Szeptycki to ask whether analogous assertion holds for all Rosenthal compacta (see \cite[Question 7.6]{KOS}, \cite[Problem 6]{O} or \cite[p. 24]{KM}). Example \ref{Example A} provides a negative answer to this question. Moreover, since the class of $L\Sigma(\leq\omega)$-spaces is stable under continuous images, the space $KA$ from Example A is a Rosenthal compactum which is not a continuous image of any compact subset of $B_1(\omega^\omega)$ consisting of functions with countably many discontinuities. As far as we know this is the first example of that sort (see \cite{AT} and Remark \ref{remark_CD} below).

We also give another counterexample to the question of Kubi\'s, Okunev and Szeptycki mentioned above. It is non-separable but has other interesting features. We need some notation first. For a set $\Gamma$ by $\Sigma(\mathbb{R}^\Gamma)$ we denote the following subset of the product $\RR^\Gamma$
$$\Sigma(\RR^\Gamma)=\{x\in \RR^\Gamma:|\{\gamma\in \Gamma:x(\gamma)\neq0\}|\leq \omega\}.$$
Put
$$\Sigma(\{0,1\}^\Gamma)=\Sigma(\RR^\Gamma)\cap \{0,1\}^\Gamma.$$

Recall that a compact space which, for some $\Gamma$, is homeomorphic to a subspace of $\Sigma(\RR^\Gamma)$ is called \textit{Corson compact}. A compact space which is homeomorphic to a weakly compact subset of a Banach space is called an \textit{Eberlein compact space}. A compact space $K$ is \textit{Gul'ko compact} if the space $C_p(K)$ of continuous functions on $K$ equipped with the pointwise topology, is a Lindel\"of $\Sigma$-space. It is well known that the class of Gul'ko compacta lies strictly between the class of Eberlein compacta and the class of Corson compact spaces.

\begin{ex}\label{Example B}
There is a compact space $KB\in B_1(\RR^2)\cap \Sigma(\{0,1\}^{\RR^2})$ such that $KB$ is not an $L\Sigma(\leq\omega)$-space yet it is an $L\Sigma(\mathcal{E})$-space, where $\mathcal{E}$ is the class of Eberlein compact spaces of cardinality not exceeding continuum.
\end{ex}

It was proved by Tkachuk in \cite{Tkachuk} that any Eberlein compact space of cardinality at most continuum is an $L\Sigma(\leq\omega)$-space. Later Molina Lara and Okunev \cite{MLO} generalized this to the class of Gul'ko compacta (see Section 5 below for a further generalization).
However, similar result for the class of Corson compact space is no longer true; a suitable example of a Corson compact space which is not $L\Sigma(\leq\omega)$ was given in \cite{KOS}. Example \ref{Example B} is a different space of that sort and has additional property of being Rosenthal. Moreover, since it is an
$L\Sigma(\mathcal{E})$-space, it can serve as a counterexample to Problems 4.10 and 4.11 in \cite{MLO}.

The next example provides a negative answer to Problems 4.12--4.14 in \cite{MLO}.

\begin{ex}\label{Example C}
There is a Corson compact $L\Sigma(\leq\omega)$-space which is not Gul'ko compact.
\end{ex}

Let $n$ be a positive integer. We say that a space $X$ is \textit{metrizably fibered} (\textit{$n$-fibered}) if there is a metrizable space $M$ and a continuous map $f:X\to M$ all of whose fibers $f^{-1}(z)$ are metrizable (have cardinality at most $n$).
Every metrizably fibered compact space $X$ is first-countable (see \cite{Tkachuk}). Let $D$ be the Alexandroff double circle space (see \cite[Example 3.1.26]{Eng}). Clearly, $D$ is $2$-fibered. By identifying all of the nonisolated points of $D$ we obtain a map of $D$ onto $A(\mathfrak{c})$, the one point compactification of a discrete set of size continuum.
Since $A(\mathfrak{c})$ is not first-countable, it is not metrizably fibered. Hence, the class of metrizably fibered compacta is not invariant under continuous images.
On the other hand, the class of compact $L\Sigma(\leq\omega)$-spaces is invariant under continuous images and every  metrizably fibered compact space is $L\Sigma(\leq\omega)$.
In view of the above Tkachuk asked in \cite{Tkachuk} the following two questions:

\begin{question}\cite[Problem 3.3]{Tkachuk}\label{question1}
Let $K$ be a metrizably fibered compact space. Is it true that every first-countable continuous image of $K$ is metrizably fibered?
\end{question}

\begin{question}\cite[Problem 3.4]{Tkachuk}\label{question2}
Is any first-countable compact $L\Sigma(\leq\omega)$-space a continuous image of a metrizably fibered space?
\end{question}
Regarding Question \ref{question1}, we prove that a lexicographic product of countably many intervals is a continuous image of a metrizably fibered compact space. In consequence the lexicographic product of three intervals can serve as a counterexample to Question \ref{question1}, as this space is not metrizably fibered (cf. \cite[Example 2.4]{Tkachuk}).
Actually, we have the following:

\begin{ex}\label{Example D}
 There is a compact first-countable space $KD$ which is not metrizable fibered yet $KD$ is a continuous image of a compact $3$-fibered space.
\end{ex}

It turns out however that we cannot replace 3-fibered by 2-fibered above. Namely, we shall prove (see Corollary \ref{theorem1.3} below):

\begin{thrm}
Let $K$ be a first-countable space. If $K$ is a continuous image of a 2-fibered compact space, then $K$ is metrizably fibered.
\end{thrm}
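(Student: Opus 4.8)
The plan is to put the data into a normal form and then produce a metrizable fibration of $K$ from a countable family of continuous functions. Let $g\colon L\to N$ witness that $L$ is $2$-fibered ($N$ compact metrizable, $|g^{-1}(z)|\le 2$) and let $f\colon L\to K$ be the given continuous surjection. First I would reduce: the relation $x\approx y\iff\big(g(x)=g(y)\text{ and }f(x)=f(y)\big)$ is a closed equivalence relation on $L$ refining the fibers of $g$, so $\tilde L:=L/\!\approx$ is again a compact $2$-fibered space over $N$ and $f$ factors through a continuous surjection $\tilde L\to K$; hence it suffices to treat $\tilde L\to K$. After this reduction we may assume that $f$ is injective on every fiber of $g$. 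Then $g$ is injective on every fiber of $f$, so $g$ maps $f^{-1}(k)$ homeomorphically onto a compact set $S_k\subseteq N$ — in particular \emph{every fiber of $f$ is metrizable} — and $(g,f)$ embeds $L$ into $N\times K$ in such a way that $\{S_k:k\in K\}$ covers $N$ with multiplicity at most $2$ (for each $z$, the set $\{k:z\in S_k\}=f(g^{-1}(z))$ has at most two elements).

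Next I would reduce the goal to a statement about functions on $L$. It is enough to find countably many continuous $\psi_i\colon L\to\RR$, each constant on the fibers of $f$, such that every common level set $Z=\bigcap_i\psi_i^{-1}(c_i)$ has metrizable image $f(Z)$: for then the induced functions $\hat\psi_i$ on $K$ define a continuous surjection $h=(\hat\psi_i)_i\colon K\to M:=h(K)\subseteq\RR^{\NN}$ onto a metrizable compactum, and $h^{-1}(m)=f\big(\bigcap_i\psi_i^{-1}(m_i)\big)$. Since each such $Z$ is closed in $L$, hence $2$-fibered over $\pi_N(Z)\subseteq N$, its image $f(Z)$ is metrizable as soon as $Z$ itself is; and $Z$ is metrizable as soon as countably many continuous functions separate the ``doubled pairs'' (points $x\ne y$ with $g(x)=g(y)$) lying in $Z$. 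So everything comes down to choosing the $\psi_i$ so that the doubled pairs they fail to separate are confined to level sets that are metrizable anyway — either because such a level set has countable $\pi_N$-image (so is a countable compactum), or because it reduces to a single fiber $f^{-1}(k)$, which we already know is metrizable. Note that one cannot hope to separate \emph{all} doubled pairs of $L$ by countably many $f$-fiber-constant functions (already when $L$ is the double arrow line this is impossible, since then $K$ would be metrizable), so leaving some unseparated is unavoidable.

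To build such $\psi_i$ I would fix a countable base $\mathcal B$ of $N$ and use first-countability of $K$ — which makes every point of $K$, hence every fiber of $f$, a zero-set of $L$ — to approximate the assignment $k\mapsto S_k$ by countably many $f$-fiber-constant continuous functions closely enough to separate all $k$ with distinct $S_k$ and to ``isolate'' the fibers $f^{-1}(k)$ whose $S_k$ is uncountable, and then run a closing-off (elementary submodel) argument to gather everything into a single countable family $\{\psi_i\}$. The hard part — and the step where both hypotheses are used in an essential, non-formal way — will be showing that the closing-off terminates: one must rule out the possibility that the still unseparated doubled pairs chain together across uncountably many fibers of $f$. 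This is exactly where the multiplicity-$2$ bound is needed (no fiber of $g$ is only \emph{partially} collapsed by $f$, which excludes the ``partial-collapse'' chains responsible for the failure of the $3$-fibered analogue, cf.\ Example~\ref{Example D}) and where first-countability of $K$ is needed (fibers of $f$ being zero-sets lets them be captured one at a time). Once the closing-off is shown to terminate, the second paragraph completes the proof.
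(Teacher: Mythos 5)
Your write-up is a plan, not a proof, and the decisive step is exactly the one you leave open. The reductions in your first two paragraphs are sound (passing to the quotient so that $f$ is injective on $g$-fibers, noting that then each fiber of $f$ embeds in $N$, and reducing the problem to finding countably many $f$-fiber-constant functions $\psi_i$ whose common level sets have metrizable $f$-image), but they are the routine part. The substance of the theorem is precisely the claim you defer to ``the hard part'': that one can actually produce such a countable family and show the ``closing-off terminates,'' i.e.\ that unseparated doubled pairs cannot chain across uncountably many fibers of $f$. Nothing in your text proves this, and this is the only place where either hypothesis ($2$-fiberedness and first countability of $K$) is used, so as it stands the argument establishes nothing beyond the easy reformulation. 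Moreover, one of your stated intermediate goals --- separating, by countably many $f$-fiber-constant continuous functions, \emph{all} pairs $k\neq k'$ with $g(f^{-1}(k))\neq g(f^{-1}(k'))$ --- is stronger than what is available: the assignment $k\mapsto g(f^{-1}(k))$ is only upper semicontinuous, and there is no reason a countable family can distinguish all such pairs; any correct argument has to live with a weaker separation property, which is where the real work starts.

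For comparison, the paper reaches the statement through the invariant $odeg$: continuous images of $n$-fibered compacta are characterized as the compacta with $odeg\leq n$ (Theorem \ref{odeg_characterization}), and then, for first-countable $K$, one builds the metrizable quotient from the \emph{concrete} countable family of Urysohn functions $\phi_{(A,B)}$ separating $f(\pi^{-1}(\overline{A}))$ from $f(\pi^{-1}(\overline{B}))$ over pairs $(A,B)$ of basic open sets of $M$ with disjoint images. Unseparated pairs are controlled not by full separation but by the Claim that $\phi(f(x))=\phi(f(y))$ forces points $\tilde{x},\tilde{y}$ in the same $\pi$-fibers with $f(\tilde{x})=f(\tilde{y})$ (an ultrafilter-limit argument), and first countability enters only through Lemma \ref{finiteremovelemma}: the finite set $F=f(\pi^{-1}(s))$ is a $G_\delta$, and removing it drops the fiber count by one, so for $n=2$ the fibers of $\phi$ become metrizable. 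If you want to salvage your approach, you need an analogue of that Claim plus the finite-$G_\delta$-removal step; without them your proposal has a genuine gap.
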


Regarding Question \ref{question2}, we obtain a partial solution given by the following:

\begin{ex}\label{Example E}
There is a compact first-countable $L\Sigma(\leq\omega)$-space $KE$ which is is not a continuous image of any compact metrizably fibered space.
\end{ex}


\section{Preliminaries}

In this section we collect basic definitions and facts that are used throughout the paper.

A \textit{set-valued map} from a space $X$ to a space $Y$ is a function that assigns to every point of $X$ a subset of $Y$. We say that a set-valued map $p:X\to Y$ is:
\begin{itemize}
 \item \textit{onto} if $\bigcup\{p(x):x\in X\}=Y$;
 \item \textit{compact-valued} if $p(x)$ is compact for all $x\in X$;
 \item \textit{upper semicontinuous} if for every open subset $U$ of $Y$, the set $\{x\in X:p(x)\subseteq U\}$ is open in $X$.
\end{itemize}

\begin{defin}
A space $X$ is an \textit{$L\Sigma(\leq\omega)$-space} if there is a separable metrizable space $M$ and
a compact-valued upper semicontinuous onto map $p:M\to X$ such that $p(z)$ is metrizable for all $z\in M$.
\end{defin}

The following fact is a part of folklore (cf. \cite[p. 2576]{KOS}):

\begin{prop}\label{LSigma-characterization}
Let $K$ be a compact space. The following two conditions are equivalent:
\begin{enumerate}[(A)]
 \item $K$ is an $L\Sigma(\leq\omega)$-space
 \item There is a countable cover $\mathcal{C}$ of $K$ consisting of closed subsets of $K$ such that for every $x\in K$ the intersection $\bigcap\{C\in \mathcal{C}:x\in C\}$ is metrizable.
\end{enumerate}
\end{prop}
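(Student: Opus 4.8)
The plan is to prove the two implications separately, using the standard definition of an $L\Sigma(\leq\omega)$-space via a compact-valued upper semicontinuous onto map $p\colon M\to K$ from a separable metrizable space $M$.

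First I would prove $(A)\Rightarrow(B)$. Assume $p\colon M\to K$ witnesses that $K$ is an $L\Sigma(\leq\omega)$-space, with each $p(z)$ metrizable. Since $M$ is separable metrizable, it is second countable; fix a countable base $\{U_n:n\in\NN\}$ for $M$. For each $n$ put $C_n=K\setminus\bigcup\{p(z):z\in M\setminus U_n\}$; equivalently, $C_n=\{x\in K: p^{-1}(x)\subseteq U_n\}$ where $p^{-1}(x)=\{z\in M: x\in p(z)\}$. I would check that each $C_n$ is closed in $K$: its complement is the union of the sets $p(z)$ over $z\notin U_n$, and upper semicontinuity together with compactness of the fibers and a standard argument shows this union is open (this is exactly the kind of fact recorded as folklore in \cite{KOS}; if a direct verification is preferred, one notes that $K\setminus C_n = \bigcup_{z\notin U_n}p(z)$ is the image under the usc compact-valued map of the closed — hence, modulo properness, suitably behaved — set $M\setminus U_n$, and usc images of closed sets under compact-valued maps into a compact Hausdorff space are closed when the domain piece is closed). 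Now $\CC=\{C_n:n\in\NN\}$ is a countable family of closed subsets of $K$. It covers $K$: given $x\in K$, the fiber $p^{-1}(x)$ is a nonempty subset of $M$; pick any $z_0\in p^{-1}(x)$ and then some $U_n$ with $z_0\in U_n$; hmm, this only gives $z_0\in U_n$, not $p^{-1}(x)\subseteq U_n$. Instead, use that $p^{-1}(x)$ need not be compact, so I would argue differently: for $x\in K$, $\bigcap\{C_n: x\in C_n\}=\bigcap\{C_n: p^{-1}(x)\subseteq U_n\}$. The set of indices $n$ with $p^{-1}(x)\subseteq U_n$ may be empty, which is a genuine issue; the correct move is to add to $\CC$ the single set $K$ itself (or to observe that for $x$ with no such $n$, the intersection is all of $K$), and the real content is the metrizability claim, which I address next — so $\CC$ covering $K$ is automatic once $K\in\CC$.

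The heart of $(A)\Rightarrow(B)$ is: for every $x\in K$, the intersection $D_x=\bigcap\{C\in\CC:x\in C\}$ is metrizable. I claim $D_x\subseteq p(z)$ for a suitable $z\in M$, whence $D_x$ is a subspace of a metrizable compactum and hence metrizable. Indeed, let $z\in p^{-1}(x)$, so $x\in p(z)$. The base sets $U_n$ containing $z$ have intersection $\{z\}$ (since $M$ is $T_1$ and second countable, points are $G_\delta$, but more is needed — actually $\{U_n: z\in U_n\}$ is a neighborhood base at $z$). For each such $n$, upper semicontinuity does not immediately give $p(z)\subseteq$ something, but I would instead use the other description: $y\in C_n$ iff $p^{-1}(y)\subseteq U_n$. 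I want to conclude $y\in C_n$ for all $n$ with $z\in U_n$ forces $y\in p(z)$. Suppose not: $y\notin p(z)$. Then $z\notin p^{-1}(y)$, and since $p$ is usc with compact values, $p^{-1}(y)$ is closed in $M$; as $M$ is metrizable (hence regular), there is a basic $U_n$ with $z\in U_n$ and $U_n\cap p^{-1}(y)=\emptyset$, i.e. $p^{-1}(y)\subseteq M\setminus U_n$; but then it is not true that $p^{-1}(y)\subseteq U_n$ provided $U_n\neq M$, so $y\notin C_n$ while $z\in U_n$ — wait, I need $x\in C_n$ too, to have $C_n\in$ the indexing family for $D_x$. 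Since $p^{-1}(x)\ni z$, having $p^{-1}(x)\subseteq U_n$ is what puts $x\in C_n$; this may fail, so one must take $U_n$ small around $z$ yet still containing all of $p^{-1}(x)$, which is generally impossible. The fix, which I expect to be the technically delicate point, is to replace $M$ by the (still separable metrizable) image and restrict attention near $x$: work with $M_x=\bigcap\{U_n: p^{-1}(x)\subseteq U_n\}$ is wrong; rather, fix $z\in p^{-1}(x)$ and intersect only over $n$ with $z\in U_n$ AND note that $x\in C_n$ iff $p^{-1}(x)\subseteq U_n$ — so restrict to the cofinal (in the neighborhood filter at $z$) family of $U_n$ that happen to contain $p^{-1}(x)$; the subtlety is whether such $U_n$ form a neighborhood base at $z$, which they need not. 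I would resolve this by using that $K$ is compact and choosing $\CC$ more cleverly: refine $\{U_n\}$ to a countable family closed under finite intersections, and use normality of $M$.

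Second, $(B)\Rightarrow(A)$. Given such a countable closed cover $\CC=\{C_n:n\in\NN\}$, I would build $M$ and $p$ directly. Let $M=\{0,1\}^{\NN}$ (a compact metrizable space), and to $s\in\{0,1\}^\NN$ assign $p(s)=\bigcap\{C_n: s(n)=1\}$ (with $p(s)=K$ if $s\equiv 0$, or one may restrict $M$ to sequences with $s\equiv 0$ excluded after adding $K$ to $\CC$). Each $p(s)$ is a closed, hence compact, subset of $K$. Surjectivity: given $x\in K$, let $s_x$ be the characteristic function of $\{n: x\in C_n\}$; then $x\in p(s_x)$ by hypothesis, and moreover $p(s_x)=D_x$ is metrizable — though we do not need metrizability of the values $p(s)$ for arbitrary $s$, and indeed $p(s)$ need not be metrizable in general. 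So I must restrict the domain: let $M=\{s_x: x\in K\}$ with the subspace topology from $\{0,1\}^\NN$ — this is separable metrizable — and define $p(s_x)=D_x$; then every value is metrizable by hypothesis. Surjectivity and compact-valuedness are clear. For upper semicontinuity of $p\colon M\to K$: given open $U\supseteq p(s_x)=D_x$, I need a neighborhood of $s_x$ in $M$ all of whose points $s_y$ satisfy $D_y\subseteq U$. Since $D_x$ is compact and $D_x\subseteq U$, and $D_x=\bigcap_{x\in C_n}C_n$, compactness gives a finite $F\subseteq\{n: x\in C_n\}$ with $\bigcap_{n\in F}C_n\subseteq U$. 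The basic clopen neighborhood $V=\{s\in\{0,1\}^\NN: s(n)=1 \text{ for all }n\in F\}$ of $s_x$ works: if $s_y\in V\cap M$ then $y\in C_n$ for all $n\in F$, so $D_y\subseteq\bigcap_{n\in F}C_n\subseteq U$. This establishes $(B)\Rightarrow(A)$ cleanly.

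The main obstacle, as flagged, is the metrizability half of $(A)\Rightarrow(B)$ — specifically, extracting from an arbitrary countable base of $M$ a countable closed cover of $K$ for which $\bigcap\{C\in\CC: x\in C\}$ is contained in some single fiber $p(z)$. I expect the clean route is: since $p$ is usc and compact-valued and $M$ is second countable, for each pair of basic open sets $U_n\subseteq U_m$ with $\cl(U_n)\subseteq U_m$, set $C_{n,m}=\{x\in K: p^{-1}(x)\cap \cl(U_n)=\emptyset\}\cup(\text{nothing})$ — more precisely one builds $C$'s from the sets $E_n=\overline{\bigcup\{p(z): z\in M\setminus U_n\}}$, shows $E_n$ is closed (usc), notes $K\setminus E_n$ is covered by the $p(z)$ with $z\in U_n$, and verifies $\bigcap\{E_n : x\notin E_n\}^c$-type manipulations land $D_x$ inside $p(z_0)$ for any $z_0\in p^{-1}(x)$, using that $\{U_n: z_0\in U_n\}$ is a neighborhood base and $p^{-1}(y)$ closed for each $y$. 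I would organize the argument around the two closed sets $p^{-1}(x)$ (in $M$) and the usc image construction, and this bookkeeping — rather than any deep idea — is where the care is needed.
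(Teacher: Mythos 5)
Your $(B)\Rightarrow(A)$ argument is fine: taking $M=\{s_x:x\in K\}\subseteq\{0,1\}^{\NN}$, where $s_x$ is the characteristic function of $\{n:x\in C_n\}$, setting $p(s_x)=\bigcap\{C_n:x\in C_n\}$, and using the finite-subfamily compactness argument for upper semicontinuity is exactly what is needed. The genuine gap is in $(A)\Rightarrow(B)$. Your sets $C_n=\{x\in K:p^{-1}(x)\subseteq U_n\}$ need not be closed: the complement is $\bigcup\{p(z):z\in M\setminus U_n\}$, and the image of a \emph{closed} subset of $M$ under a compact-valued upper semicontinuous map need not be closed unless that subset is compact (take $M=\omega$ discrete, where every set-valued map is upper semicontinuous, and $p(n)=\{x_n\}$ with $x_n$ converging in $K$ to a point not of this form). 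So the principle you invoke in passing (``usc images of closed sets \dots are closed when the domain piece is closed'') is false as stated. Beyond that, you yourself flag that the covering property fails for these sets and, more importantly, that the metrizability of $\bigcap\{C\in\CC:x\in C\}$ is never established; the closing sketch involving $E_n=\overline{\bigcup\{p(z):z\in M\setminus U_n\}}$ remains a gesture. As written, one half of the equivalence is unproved.

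The missing idea is to work with images of basic open sets rather than with preimages or complements. Fix a countable base $\{U_n:n\in\NN\}$ of $M$ and put $C_n=\overline{\bigcup\{p(z):z\in U_n\}}$ (closure in $K$). These sets are closed and cover $K$: if $x\in p(z)$, choose any $U_n\ni z$; then $x\in C_n$. For metrizability, fix $x\in K$ and $z\in M$ with $x\in p(z)$, and let $y\notin p(z)$. Since $K$ is compact Hausdorff and $p(z)$ is compact, there are disjoint open sets $V\supseteq p(z)$ and $W\ni y$; upper semicontinuity gives a basic $U_n$ with $z\in U_n$ and $p(w)\subseteq V$ for every $w\in U_n$, hence $x\in C_n\subseteq\overline{V}\subseteq K\setminus W$, so $y\notin C_n$ while $x\in C_n$. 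Therefore $\bigcap\{C_n:x\in C_n\}\subseteq p(z)$, a compact metrizable space, so this intersection is metrizable. (The paper states the proposition as folklore without proof, but this image-based construction is the standard argument and is the same idea used in the paper's proof of Proposition~\ref{KLSigma_characterization}, where one takes $C_s=\bigcup\{p(x):x\in F_s\}$ over pieces of the domain.)
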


\begin{xrem}
Spaces (not necessarily compact) satisfying condition $(B)$ above were first considered independently by Tkachuk in \cite{Tkachuk} and Tkachenko in
 \cite{Tkachenko}. In \cite{Tkachuk} they are called \textit{weakly metrizably fibered spaces}, whereas in \cite{Tkachenko} they are considered under the name \textit{metrizably-approximable spaces}. They were introduced and studied as a natural generalization of an important class of metrizably fibered spaces defined below.
\end{xrem}

\begin{defin}
 Let $n$ be a positive integer. A space $X$ is \textit{metrizably fibered} (\textit{$n$-fibered}) if there is a metrizable space $M$ and a continuous map $f:X\to M$ such that $f^{-1}(z)$ is metrizable (have cardinality at most $n$) for all $z\in M$.
\end{defin}

The following notion was introduced in \cite{KOS}:

\begin{defin}
 A space $X$ is a \textit{$KL\Sigma(\leq\omega)$-space} if there is a compact metrizable space $M$ and
a compact-valued upper semicontinuous onto map $p:M\to X$ such that $p(z)$ is metrizable for all $z\in M$.
\end{defin}

\begin{prop}\label{KLSigma_characterization}
 The following conditions are equivalent for any space $K$:
 \begin{enumerate}[(A)]
  \item $K$ is a $KL\Sigma(\leq\omega)$-space.
  \item $K$ is a continuous image of a compact metrizably fibered space.
  \item There is a family $\{C_s:s\in 2^{<\omega}\}$ of closed subsets of $K$ satisfying the following conditions:
\begin{enumerate}[(i)]
\item $C_\emptyset =K$
\item $C_s=C_{s\frown 0}\cup C_{s\frown 1}$, for every $s\in 2^{<\omega}$
\item for every $\sigma\in 2^\omega$ the set $\bigcap_{n}C_{\sigma|n}$ is metrizable.
\end{enumerate}
 \end{enumerate}
\end{prop}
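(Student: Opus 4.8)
The plan is to establish the cycle $(A)\Rightarrow(C)\Rightarrow(B)\Rightarrow(A)$, with $(C)$ serving as the combinatorial bridge that makes the equivalences transparent. For $(B)\Rightarrow(A)$: if $K$ is a continuous image of a compact metrizably fibered space $L$ under $q\colon L\to K$, and $f\colon L\to M$ witnesses that $L$ is metrizably fibered, then $M$ is compact metrizable (being a continuous image of the compact space $L$), and the set-valued map $p\colon M\to K$ defined by $p(z)=q(f^{-1}(z))$ is onto, compact-valued, and upper semicontinuous (the latter is a routine check using compactness of fibers and continuity of $q$ and $f$), and each $p(z)$ is metrizable as a continuous image of the metrizable compactum $f^{-1}(z)$. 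So $K$ is a $KL\Sigma(\leq\omega)$-space.

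For $(A)\Rightarrow(C)$: let $M$ be a compact metrizable space with a compact-valued u.s.c.\ onto map $p\colon M\to K$ with metrizable values. Fix a countable base $\{U_n:n\in\omega\}$ for $M$ closed under finite unions, or better, build a Cantor scheme $\{V_s:s\in 2^{<\omega}\}$ of closed subsets of $M$ with $V_\emptyset=M$, $V_s=V_{s\frown 0}\cup V_{s\frown 1}$, and $\diam V_s\to 0$ along every branch (this uses that $M$ is compact metrizable, though one must allow the scheme to be non-injective since $M$ need not be zero-dimensional — one simply covers each $V_s$ by two closed sets of half the diameter). Then set $C_s=p(V_s)$. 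Conditions (i) and (ii) are immediate from the corresponding properties of the $V_s$ together with $p$ being onto and the elementary identity $p(A\cup B)=p(A)\cup p(B)$. Each $C_s$ is closed in $K$ because $p$ is u.s.c.\ and compact-valued, hence maps closed (equivalently compact) sets to closed sets. For (iii): along a branch $\sigma\in 2^\omega$, $\bigcap_n V_{\sigma|n}$ is a single point $z_\sigma$ (by the vanishing diameters), and one checks $\bigcap_n C_{\sigma|n}=\bigcap_n p(V_{\sigma|n})=p(z_\sigma)$ — the nontrivial inclusion $\supseteq$ is clear, and $\subseteq$ follows from u.s.c.\ and compactness: if $y\in p(V_{\sigma|n})$ for all $n$, pick $x_n\in V_{\sigma|n}$ with $y\in p(x_n)$; then $x_n\to z_\sigma$, and u.s.c.\ forces $y\in p(z_\sigma)$ (if $y\notin p(z_\sigma)$, separate $p(z_\sigma)$ from $y$ by an open $U$ with $p(z_\sigma)\subseteq U$ and $y\notin \cl U$, then $p(x_n)\subseteq U$ eventually, a contradiction). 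Since $p(z_\sigma)$ is metrizable by hypothesis, (iii) holds.

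For $(C)\Rightarrow(B)$: this is the step I expect to be the main obstacle, as it requires manufacturing an actual metrizably fibered compact preimage out of the abstract tree of closed sets. The natural candidate is the subspace $L=\{(\sigma,x)\in 2^\omega\times K : x\in\bigcap_n C_{\sigma|n}\}$ with the projections $q=\pi_K\colon L\to K$ and $f=\pi_{2^\omega}\colon L\to 2^\omega$. The map $q$ is onto: given $x\in K$, conditions (i) and (ii) let one inductively choose $\sigma$ so that $x\in C_{\sigma|n}$ for every $n$ (at each stage $x\in C_{\sigma|n}=C_{\sigma|n\frown 0}\cup C_{\sigma|n\frown 1}$, so $x$ lies in at least one half). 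The fiber $f^{-1}(\sigma)=\{\sigma\}\times\bigcap_n C_{\sigma|n}$ is metrizable by (iii), and $2^\omega$ is compact metrizable, so it remains only to verify that $L$ is closed in $2^\omega\times K$, hence compact. Closedness follows because each $C_s$ is closed: if $(\sigma,x)\notin L$, then $x\notin C_{\sigma|m}$ for some $m$, and then $\{\tau:\tau|m=\sigma|m\}\times(K\setminus C_{\sigma|m})$ is an open neighborhood of $(\sigma,x)$ disjoint from $L$ (using that $\tau|m=\sigma|m$ implies $C_{\tau|m}=C_{\sigma|m}$). Thus $L$ is compact metrizably fibered and $q$ witnesses $(B)$. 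The only delicate point throughout is being careful that the Cantor scheme in $(A)\Rightarrow(C)$ need not be disjoint or injective, which is harmless for all the arguments above.
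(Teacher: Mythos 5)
Your proposal is correct, and most of its ingredients coincide with the paper's: your $(B)\Rightarrow(A)$ is the paper's argument (the set-valued map $z\mapsto q(f^{-1}(z))$), and your $(A)\Rightarrow(C)$ is the paper's pushforward of a Cantor scheme on $M$ along $p$, with essentially the same compactness/upper-semicontinuity argument identifying $\bigcap_{n}C_{\sigma|n}$ with $p(z_\sigma)$. The genuinely different piece is the third implication: the paper closes the equivalences by proving $(C)\Rightarrow(A)$, defining $p(\sigma)=\bigcap_n C_{\sigma|n}$ on $2^\omega$ and checking it is compact-valued and upper semicontinuous (and it proves $(A)\Rightarrow(B)$ separately via the graph of $p$), whereas you prove $(C)\Rightarrow(B)$ directly by exhibiting the closed set $L=\{(\sigma,x)\in 2^\omega\times K: x\in\bigcap_n C_{\sigma|n}\}$ with its two projections; this $L$ is exactly the graph of the paper's set-valued map, so your route merges the paper's two steps and avoids verifying upper semicontinuity explicitly, at the (shared) cost of needing $K$ compact so that $L$ is compact --- note that both your argument and the paper's tacitly use compactness of $K$ (or at least compactness of the sets $\bigcap_n C_{\sigma|n}$), which is harmless since the proposition is only applied to compacta. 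Two small touch-ups: in $(A)\Rightarrow(C)$ the parenthetical recipe ``cover each $V_s$ by two closed sets of half the diameter'' is not literally achievable (a circle cannot be split into two closed pieces of half its diameter), but the scheme you need does exist --- either binarize finite closed covers of small mesh over several levels, so that diameters vanish along every branch, or, as the paper does, pull back the standard clopen scheme along a continuous surjection $2^\omega\to M$ and use that a decreasing sequence of compacta with singleton intersection automatically has vanishing diameters; and in $(B)\Rightarrow(A)$ the fibering map $f:L\to M$ need not be onto, so one should replace $M$ by $f(L)$ to get a compact metrizable base.
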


\begin{proof}
Suppose that $K$ is a $KL\Sigma(\leq \omega)$-space. Fix a compact metrizable space $M$ and an upper semicontinuous onto map $p:M\to K$ so that $p(z)$ is metrizable for each $z\in M$. Let
$$L=\{\la z,x \ra\in M\times K:x\in p(z)\}$$
be the graph of $p$. Since $p$ is upper semicontinuous, the set $L$ is closed in $M\times K$ and hence it is compact. The projection onto the second coordinate maps $L$ onto $K$ (because $p$ is onto), whereas the projection onto the first coordinate maps $L$ onto a (metrizable) subspace of $M$ and its fibers are of the form $p(z)$, so they are metrizable. This proves $(A)\Rightarrow (B)$.

To prove $(B)\Rightarrow (A)$ fix a compact space $L$, a compact metrizable space $M$, a continuous surjection $f:L\to K$ and a continuous onto map $g:L\to M$ such that $g^{-1}(z)$ is metrizable for every $z\in M$. It is easy to check that the assignment
$$z\mapsto f(g^{-1}(z))$$
is an upper semicontinuous compact-valued map from $M$ onto $K$ and $f(g^{-1}(z))$ is metrizable being a continuous image of a compact metrizable space $g^{-1}(z)$.

To show $(A)\Rightarrow (C)$, fix a compact metric space $M$ and a compact valued upper semicontinuous map $p:M\to K$ such that $p(M)=K$ and $p(x)$ is metrizable, for all $x\in M$. The space $M$ is a continuous image of the Cantor set $2^\omega$. Hence, that there is a family $\{F_s:s\in 2^{<\omega}\}$ of closed subsets of $M$ such that $F_{\emptyset}=M$, $F_s=F_{s\frown 0}\cup F_{s\frown 1}$ and for every $\sigma\in 2^\omega$ the set $\bigcap_n F_{\sigma|n}$ is a singleton.
For $s\in 2^{<\omega}$, we define $$C_s=\bigcup\{p(x):x\in F_s\}.$$
Since
$p$ is compact-valued and upper semicontinuous, the set $C_s$ is compact, for every $s\in 2^{<\omega}$. Condition (i) follows from the fact that $p$ is onto, and condition (ii) is clear.
To show (iii), observe that if $\sigma\in 2^\omega$, then
$\bigcap_{n}F_{\sigma|n}=\{a\}$ for some $a\in M$
and $p(a)=\bigcap_{n} C_{\sigma|n}$ by compactness and upper semicontinuity of $p$. Indeed, for any open set $U$ in $K$, if $p(a)\subseteq U$, then the set $V=\{x\in M: p(x)\subseteq U\}$ is an open neighborhood of $a$ in $M$. Hence, $F_{\sigma|n}\subseteq V$, for all but finitely many $n$'s. This gives $C_{\sigma|n}\subseteq U$ for all but finitely many $n$'s so $p(a)=\bigcap_{n\in \omega} C_{\sigma|n}$. But $p(a)$ is metrizable by our assumption on $p$.

For $(C)\Rightarrow (A)$, fix a family $\{C_s:s\in 2^{<\omega}\}$ as in condition $(C)$. It can be easily verified that the multivalued map $p:2^\omega\to K$ given by
$p(\sigma)=\bigcap_{n}C_{\sigma|n}$ is is compact-valued upper semicontinuous and $p(\sigma)$ is metrizable, for every $\sigma\in 2^\omega$, by (iii).
\end{proof}

For a subset $A$ of a space $X$, we denote by $\chi_A:X\to\{0,1\}$ the characteristic function of the set $A$, given by the formula:
\begin{equation*}
\chi_A(x)=
  \left\{\begin{aligned}
  &   1 &\mbox{if }x\in A\\
&0  &\mbox{if }x\notin A
\end{aligned}
 \right.
\end{equation*}
It follows from the Baire criterion that if $X$ is a Polish space and $A\subseteq X$, then $\chi_A$ is a Baire class one function if and only if the set $A$ is both $G_\delta$ and $F_\sigma$.

\medskip

By $\mathbb{S}$ we will denote the \textit{split interval}, i.e. the space $((0,1]\times\{0\})\cup ([0,1)\times \{1\})$ endowed with the lexicographic order topology. To simplify notation, for $x\in [0,1]$, the points $\la x,0 \ra, \la x,1\ra\in \mathbb{S}$ will be denoted by $x^-$ and $x^+$ respectively.

\medskip

For a subset $A$ of a topological space we denote by $\overline{A}$ the closure of $A$. The interior of $A$ is denoted by $\Int A$.

\section{Example A: A Separable Rosenthal compact that is not an $L\Sigma(\leq\omega)$-space}

In this section we will construct a space $KA$ from Example \ref{Example A}.
For $a\in \RR^2$ let $D(a)$ be the open disk of radius 1 centered at $a$. By $\partial D(a)$ we denote the boundary (i.e. the circumference) of $D(a)$. For $a=\la a_1,a_2\ra\in \RR^2$ and $\theta\in \RR$ we set
\begin{align*}
&L_0(a,\theta)=D(a)\cup\{\la a_1+\cos\varphi,a_2+\sin\varphi\ra:\theta<\varphi<\theta+\pi\}\\
&L_1(a,\theta)=D(a)\cup\{\la a_1+\cos\varphi,a_2+\sin\varphi\ra:\theta<\varphi\leq\theta+\pi\}\\
&L_2(a,\theta)=D(a)\cup\{\la a_1+\cos\varphi,a_2+\sin\varphi\ra:\theta\leq \varphi<\theta+\pi\}\\
\end{align*}
Note that $L_i(a,\theta)$ is the open disk of radius 1 centered at $a$ with a half of its circumference attached (without endpoints if $i=0$ or with precisely one endpoint if $i=1,2$, see Figure \ref{figure1} below).


\begin{figure}[H]
\caption{Examples of $L_i(a,\theta)$ sets. For $i=0,1,2$, the set $L_i(a,\theta)$ consists of the shaded area together with the half of the circumference marked in black; The points $\alpha,\beta$ do not belong to $L_0(a,\theta)$; $\alpha\notin L_1(a,\theta)$, $\beta\in L_1(a,\theta)$; $\alpha\in L_2(a,\theta)$, $\beta\notin L_2(a,\theta)$.}
\label{figure1}

\medskip

\includegraphics{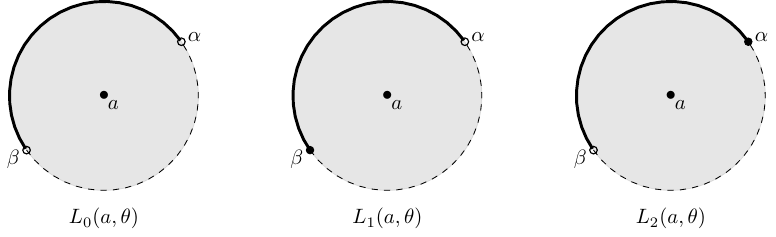}
\end{figure}


Let
$\mathcal{D}=\{D(p):p\in \mathbb{Q}^2\}$ and  let $$Y=\{\chi_D:D\in \mathcal{D}\}$$ be
considered as a subspace of the Cantor cube $\{0,1\}^{\mathbb{R}^2}$. Our compact space $KA$ is the closure of $Y$ in $\{0,1\}^{\mathbb{R}^2}$.

Let $$\mathcal{B}=\{L_i(a,\theta):i\in\{0,1,2\},\; a\in\RR^2,\; \theta\in \RR\}$$

We have the following:

\begin{lemma}\label{one inclusion}
$Y\cup\{\chi_B:B\in\mathcal{B}\}\cup\{\chi_\emptyset\}\subseteq KA$.
\end{lemma}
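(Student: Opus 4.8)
The plan is to show that every characteristic function $\chi_B$ with $B\in\mathcal{B}$, as well as $\chi_\emptyset$ and each $\chi_D$ with $D\in\mathcal D$, lies in the closure of $Y$. Since $Y\subseteq KA$ trivially and $KA$ is by definition the closure of $Y$, the only real content is approximating $\chi_B$ for $B=L_i(a,\theta)$ and approximating $\chi_\emptyset$. The topology on $\{0,1\}^{\RR^2}$ is the product topology, so a basic neighborhood of a point $f$ is determined by finitely many coordinates $x_1,\dots,x_k\in\RR^2$: I need, given such a finite set of points and the target values $\chi_B(x_1),\dots,\chi_B(x_k)$, to produce a rational-center open unit disk $D(p)$ ($p\in\mathbb Q^2$) with $\chi_{D(p)}(x_j)=\chi_B(x_j)$ for every $j$.

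First I would handle $\chi_\emptyset$: given finitely many points $x_1,\dots,x_k$, I must find $p\in\mathbb Q^2$ with $D(p)$ missing all of them, which is immediate by translating the center far away (the complement of finitely many closed unit disks is nonempty and open, hence meets $\mathbb Q^2$). Next, for $\chi_{L_i(a,\theta)}$, partition the given finite set into those $x_j$ lying in $L_i(a,\theta)$ and those not. Points in the \emph{open} disk $D(a)$ are at distance strictly less than $1$ from $a$; points on the relevant half-circumference are at distance exactly $1$. The key geometric observation is that by moving the center slightly from $a$ to a nearby rational point $p$, one can simultaneously (a) keep every point of $D(a)\cap\{x_1,\dots,x_k\}$ inside $D(p)$, since strict inequalities are stable under small perturbations, (b) pull each of the finitely many boundary points that belong to $B$ into the open disk $D(p)$, by shifting $p$ slightly in the direction of those points, and (c) keep every point of $\{x_1,\dots,x_k\}$ not belonging to $B$ outside the closed disk $\overline{D(p)}$ — here one uses that such a point is either at distance $>1$ from $a$ (stable) or on the boundary circle but on the ``wrong'' half or wrong endpoint, in which case it is at distance exactly $1$ from $a$ and a small push of the center \emph{away} from it makes the distance exceed $1$. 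The directions required in (b) (toward points of $B$ on the boundary) and (c) (away from points not in $B$ on the boundary) are all contained in the same open half-plane of directions (roughly, the half-plane of the diameter determined by $\theta$), so a single small displacement of the center works for all finitely many constraints at once; then perturb to a rational center, which only changes distances by an arbitrarily small amount and preserves all the (now strict) inequalities. Finally $\chi_D$ for $D\in\mathcal D$ is already in $Y$, so there is nothing to prove, and one notes $Y\subseteq KA$ by definition.

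The main obstacle is the case analysis at the boundary circle: one must argue carefully that the finitely many ``attract'' directions (boundary points of $x_j$ that lie in $B$) and ``repel'' directions (boundary points of $x_j$ outside $B$) can be satisfied by one common small center-displacement, exploiting that $L_i(a,\theta)$ contains exactly the points at distance $\le 1$ lying in a closed/open half-plane through $a$, so the attract points sit in the closed half-plane of ``$\varphi\in[\theta,\theta+\pi]$'' directions while the repel points that are on the circle sit in the complementary open half-plane (together with the one or two excluded endpoints, handled by the $i\in\{0,1,2\}$ bookkeeping). Once the direction of displacement is pinned down and chosen small enough relative to the finite margins coming from the strict inequalities, everything is routine. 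I would organize the write-up as: (1) reduce to approximating $\chi_\emptyset$ and each $\chi_B$ by elements of $Y$ using the product-topology basis; (2) dispose of $\chi_\emptyset$; (3) fix $B=L_i(a,\theta)$ and a basic neighborhood, split the finite coordinate set, identify the safe displacement direction, choose its magnitude, and rationalize the center.
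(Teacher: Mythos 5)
Your proposal is correct, but it takes a genuinely different route from the paper. You work directly with the product-topology basis: given finitely many coordinates, you perturb the center $a$ by a small displacement (tilted slightly off the perpendicular to the diameter determined by $\theta$, with the tilt and magnitude chosen against the finite angular margins) and then rationalize, so that all three types $L_0, L_1, L_2$ are approximated by elements of $Y$ in one step. The paper instead proves only $\chi_{L_0(x,\theta)}\in KA$ directly, by choosing a specific open region $U$ (inside $D(x)$, in the correct half-plane, away from the disks around the endpoints $\alpha,\beta$), taking a sequence of rational centers $p_n\in U$ converging to $x$, and verifying pointwise convergence of $\chi_{D(p_n)}$ via the two arcs $\Gamma_n,\Delta_n$; the points $\chi_{L_1(x,\theta)}$ and $\chi_{L_2(x,\theta)}$ are then obtained as limits of $\chi_{L_0(x,\theta\pm 1/n)}$, using that $KA$ is closed. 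Your one-step argument is more elementary in that it needs no second limiting pass and no auxiliary region $U$, but it puts all the weight on the simultaneous-constraint analysis at the circle, which you correctly flag: note that the attract direction of the included endpoint (for $i=1,2$) lies on the \emph{boundary} of the half-plane of directions, not in its interior, so the common displacement must be tilted by a small $\varepsilon$ depending on the finite coordinate set, and the repel points whose direction is orthogonal to the displacement are kept out only by a second-order (order $t^2$) margin, which constrains how finely you must rationalize the center. With those two quantitative points made explicit, your write-up plan goes through; the paper's sequential scheme buys a cleaner verification (each $p_n$ lies in a single fixed region and the arc picture does the bookkeeping) at the cost of the extra limit.
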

\begin{proof}
A basic clopen neighborhood of $\chi_\emptyset$ in $\{0,1\}^{\mathbb{R}^2}$ is of the form
$$C_F=\{\xi\in \{0,1\}^{\RR^2}:\xi(x)=0\mbox{ for }x\in F\},$$
where $F\subseteq \mathbb{R}^2$ is finite. Since for any finite $F\subseteq \RR^2$ we can find $D\in\mathcal{D}$ disjoint from $F$, the set $Y$ meets $C_F$ for each $F$. This gives $\chi_\emptyset\in KA$.

Fix $x=\la x_1,x_2\ra \in \RR^2$ and $\theta\in \RR$. Consider
the following two points in $\RR^2$:
$$\alpha=\la x_1+\cos\theta,x_2+\sin\theta\ra \mbox{ and } \beta=\la x_1+\cos(\theta+\pi), x_2+\sin(\theta+\pi)\ra$$
The points $\alpha$ and $\beta$ are the endpoints of the half of the circumference that is contained in each $L_i(x,\theta),\;i=0,1,2$ (see Figure \ref{figure1}).

\begin{claim}\label{claim}
$\chi_{L_0(x,\theta)}\in KA$
\end{claim}
\begin{proof}
Let $l$ be the line through $\alpha$ and $\beta$. The set $\RR^2\setminus l$ is the union of two disjoint open half-planes $H$ and $H'$ where $H$ contains the set $L_0(x,\theta)\setminus D(x)$.

To show that $\chi_{L_0(x,\theta)}\in KA$ let us consider the following subset of $\RR^2$ (cf. Figure \ref{figure2} below):
\begin{align*}
U=D(x)\cap H\cap\left(\RR^2\setminus \left(\overline{D(\alpha)}\cup \overline{D(\beta)} \right)\right)
\end{align*}

\begin{figure}[H]
\caption{The set $U$ consists of the shaded area without the boundary.}
\label{figure2}

\medskip

\includegraphics{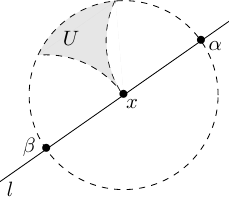}

\end{figure}


Clearly, $U$ is open in $\RR^2$ and $x\in \overline{U}$. So there is
a sequence $\{p_n\in \mathbb{Q}^2\cap U:n\in \omega\}$ that converges to $x$ in $\RR^2$.
We will check that
the sequence $\{\chi_{D(p_n)}:n\in\omega\}$ pointwise converges to $\chi_{L_0(x,\theta)}$.

Since $(p_n)_{n\in\omega}$ converges to $x$, if $z\in D(x)$ then $\chi_{D(p_n)}(z)=1$ for sufficiently large $n$. Similarly, if $z\notin \overline{D(x)}$ then eventually
$\chi_{D(p_n)}(z)=0$. It remains to verify that $(\chi_{D(p_n)}(z))_{n\in \omega}$ converges to $\chi_{L_0(x,\theta)}(z)$ for $z\in \partial D(x)$.

For every $n\in\omega$ denote
$$\{a_n, b_n\}=\partial D(p_n)\cap\partial D(x).$$
Let us assume that $a_n$ is closer to $\alpha$ than $b_n$, and $b_n$ is closer to $\beta$ than $a_n$.

Since $p_n\in U$, we have
\begin{align}
a_n\in H \mbox{ and }b_n\in H \label{eq2}
\end{align}
The points $a_n, b_n$ divide the circle $\partial D(x)$ into two open arcs $\Gamma_n$ and $\Delta_n$. One of them, say $\Gamma_n$, is entirely contained in $D(p_n)$ whereas $\Delta_n$ is disjoint from $D(p_n)$ (see Figure \ref{figure3} below).


\begin{figure}[H]
 \caption{The open arc $\Gamma_n$ marked in blue is contained in $D(p_n)$. The open arc $\Delta_n$ which is marked in red is disjoint from $D(p_n)$.}
 \label{figure3}

 \medskip

 \includegraphics{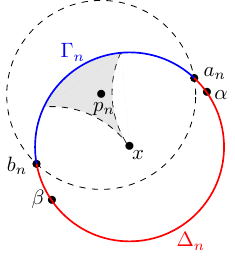}
\end{figure}


Since $(p_n)_{n\in\omega}$ converges to $x$, it follows that the sequences $(a_n)_{n\in\omega}$ and $(b_n)_{n\in\omega}$ converge to $\alpha$ and $\beta$, respectively.
So if $z\in  L_0(x,\theta)\cap \partial D(x)$, then $z\in \Gamma_n$ for sufficiently large $n$. Since $\Gamma_n\subseteq D(p_n)$, we infer that eventually $\chi_{D(p_n)}(z)=1=\chi_{L_0(x,\theta)}(z)$. If $z\in \partial D(x)\setminus L_0(x,\theta)$, then by \eqref{eq2}, $z\in \Delta_n$. As $\Delta_n\cap D(p_n)=\emptyset$ for all $n$, we conclude that $\chi_{D(p_n)}(z)=0=\chi_{L_0(x,\theta)}(z)$ for all $n$.
\end{proof}

By Claim, $$\chi_{L_0(x, \theta+1/n)},\;\chi_{L_0(x, \theta-1/n)} \in KA,\mbox{ for every } n\geq 1.$$
The sequences
$(\chi_{L_0(x,\theta+1/n)})_{n\in \omega}$ and  $(\chi_{L_0(x,\theta-1/n)})_{n\in \omega}$ pointwise converge to $\chi_{L_1(x,\theta)}$ and $\chi_{L_2(x,\theta)}$, respectively.
Hence, $\chi_{L_1(x,\theta)}, \chi_{L_2(x,\theta)}\in KA$.
\end{proof}

\begin{lemma}\label{Lemma-representation of K}
The space $KA$ is a separable Rosenthal compact space and
$KA=Y\cup\{\chi_B:B\in\mathcal{B}\}\cup\{\chi_\emptyset\}$.
\end{lemma}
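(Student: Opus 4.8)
The plan is to establish three things: that $KA$ is separable, that $KA$ is a Rosenthal compactum, and that $KA$ coincides with the explicitly described set $Y\cup\{\chi_B:B\in\mathcal{B}\}\cup\{\chi_\emptyset\}$. Separability is immediate: $KA=\overline{Y}$ and $Y=\{\chi_D:D\in\mathcal{D}\}$ is countable because $\mathcal{D}$ is indexed by $\mathbb{Q}^2$. For the Rosenthal property one argues that every characteristic function $\chi_B$ with $B\in\mathcal{B}\cup\mathcal{D}\cup\{\emptyset\}$, viewed as a function on the Polish space $\RR^2$, is of Baire class one; by the Baire criterion quoted in the Preliminaries this reduces to checking that each such $B$ is simultaneously $F_\sigma$ and $G_\delta$ in $\RR^2$. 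An open disk $D(a)$ is open, hence $F_\sigma$, and it is $G_\delta$ as well (being open in a metric space); a set $L_i(a,\theta)$ is the union of an open set with (part of) a one-dimensional circle arc, so it is easily seen to be both $F_\sigma$ and $G_\delta$. Then, once we know $KA$ equals the claimed set, every point of $KA$ is a Baire-one function on $\RR^2$ and therefore $KA\subseteq B_1(\RR^2)$ is a Rosenthal compactum.

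The heart of the lemma is the set equality, and by Lemma \ref{one inclusion} only the inclusion $KA\subseteq Y\cup\{\chi_B:B\in\mathcal{B}\}\cup\{\chi_\emptyset\}$ remains. The strategy is to take an arbitrary $\xi$ in the closure of $Y$ in $\{0,1\}^{\RR^2}$, realize it as a limit of a net (or, exploiting metrizability of the relevant pieces, a sequence) $\chi_{D(p_n)}$, and analyze where the centers $p_n\in\mathbb{Q}^2$ can accumulate. First I would argue that $\xi=\chi_A$ for $A=\xi^{-1}(1)$, and that $A$ must be one of the listed sets. The key geometric observation is that if the $p_n$ have an accumulation point $x\in\RR^2$, then $D(x)\subseteq A\subseteq\overline{D(x)}$: any point interior to $D(x)$ lies in $D(p_n)$ for large $n$ (along the subnet), and any point outside $\overline{D(x)}$ lies outside $D(p_n)$ for large $n$. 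Hence $A$ differs from $D(x)$ only on the circle $\partial D(x)$, and one must determine which subset of the circle survives. Passing to a further subnet one may assume the circles $\partial D(p_n)$ approach $\partial D(x)$ in a controlled way, meeting it in two points $a_n,b_n$ converging to antipodal points $\alpha,\beta$ of $\partial D(x)$ (or degenerating); the arc of $\partial D(x)$ inside $D(p_n)$ then converges to one of the two half-circles cut off by the line through $\alpha,\beta$, and the three possibilities for which endpoints are included give exactly $L_0(x,\theta)$, $L_1(x,\theta)$, $L_2(x,\theta)$ for the appropriate angle $\theta$. If instead the $p_n$ have no accumulation point in $\RR^2$, i.e. $|p_n|\to\infty$, then for every fixed $z\in\RR^2$ eventually $z\notin D(p_n)$, so $\xi=\chi_\emptyset$.

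The main obstacle I anticipate is the careful handling of the boundary circle in the accumulating case: one must pass to subnets so that the pair of intersection points $\partial D(p_n)\cap\partial D(x)$, the choice of which arc lies inside $D(p_n)$, and the limiting behavior of the endpoints are all simultaneously controlled, and then verify that the limit of $\chi_{D(p_n)}$ on $\partial D(x)$ is precisely $\chi_{L_i(x,\theta)}$ for one of $i=0,1,2$ rather than some other subset of the semicircle. This is essentially a refinement of the computation already carried out in Claim \ref{claim} (which produced $\chi_{L_0(x,\theta)}$ from disks approaching from one side), run in reverse and with all cases enumerated. The degenerate subcases — where $\partial D(p_n)$ becomes tangent to $\partial D(x)$, so $a_n$ and $b_n$ collide — need to be checked to see that they still yield a set of the form $L_i(x,\theta)$ (or $D(x)$ itself, which equals $L_0\cup$ nothing, but note $D(x)=\chi_{L_0(x,\theta)}$ is not literally in our list; in fact $D(x)$ does not arise as a limit since the two intersection points are always genuinely distinct for $p_n\neq x$ close to $x$, or one handles it by noting $\chi_{D(x)}$ equals $\chi_{L_0}$ only up to the circle — this point deserves a line of care). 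Once these topological bookkeeping details are settled, assembling the three conclusions of the lemma is routine.
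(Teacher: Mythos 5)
Your geometric analysis of the possible limits of disk indicators is essentially the one in the paper, but there is a genuine gap at the very first step of the inclusion $KA\subseteq Y\cup\{\chi_B:B\in\mathcal{B}\}\cup\{\chi_\emptyset\}$: you take an arbitrary $\xi\in\overline{Y}$ and propose to ``realize it as a limit of a net (or, exploiting metrizability of the relevant pieces, a sequence)''. The cube $\{0,1\}^{\RR^2}$ is not metrizable, and at this stage nothing relevant is known to be metrizable (that $KA$ is Rosenthal is only deduced by you \emph{after} the set equality), so there is no a priori reason why a point of the closure of the countable set $Y$ should be a limit of a sequence from $Y$. This is exactly where the paper's proof uses the Bourgain--Fremlin--Talagrand theorem: since $Y$ is a countable family of Baire class one functions and (by the case analysis you describe) every sequence in $Y$ has a subsequence converging pointwise to a member of the displayed set, which again consists of Baire-one functions, the closure of $Y$ lies in $B_1(\RR^2)$ and is a Rosenthal compactum, hence Fr\'echet--Urysohn, so every point of $KA$ \emph{is} a sequential limit from $Y$. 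That single theorem both licenses the reduction to sequences and delivers the Rosenthal property; your write-up has neither a substitute for it nor an acknowledgment that it is needed.

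If instead you commit to nets throughout, the argument can be completed without BFT, but then the bookkeeping you defer is precisely the proof: pass to a subnet with $p_i\to p$ in the one-point compactification of $\RR^2$ (the case $p=\infty$ gives $\chi_\emptyset$); if $p_i=p$ frequently the limit is $\chi_{D(p)}\in Y$; otherwise pass to a further subnet on which the unit vectors $(p_i-p)/\|p_i-p\|$ converge, so that every point of the open half-circle of $\partial D(p)$ in the limiting direction is eventually in $D(p_i)$ and every point of the opposite open half-circle is eventually outside it; finally, since the endpoints $\alpha,\beta$ satisfy $d(\alpha,\beta)=2$, the triangle inequality shows that no $D(p_i)$ can contain both, so the pair of limit values at the endpoints is never $(1,1)$ and the limit is $\chi_{L_i(p,\theta)}$ for some $i\in\{0,1,2\}$ (in particular the bare open disk cannot arise when $p_i\neq p$ eventually, and your tangency worry is vacuous: two distinct unit circles with centers at distance less than $2$ always meet in two distinct points). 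As written, your proposal leaves exactly these points -- the passage to sequences/subnets, the stabilization of the side of approach, and the exclusion of the ``both endpoints'' and ``open disk only'' limits -- as unresolved obstacles, so the proof is incomplete at its pivotal step; either supply the BFT reduction as the paper does, or carry out the subnet argument in full.
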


\begin{proof}
By Lemma \ref{one inclusion} we only need to show that $KA\subseteq Y\cup \{\chi_B:B\in \mathcal{B}\}\cup\{\chi_\emptyset\}$.
The set $Y$ is countable and consists of Baire class one functions on $\RR^2$ (because each function in $Y$ is the characteristic function of an open disk which is simultaneously $F_\sigma$ and $G_\delta$ in $\RR^2$).
By Bourgain-Fremlin-Talagrand theorem (cf. \cite[p. 56]{Tod}), it is enough to check that every sequence of elements of $Y$ has a subsequence convergent to an element of $Y\cup \{\chi_B:B\in \mathcal{B}\}\cup\{\chi_\emptyset\}$.

To this end, take a nontrivial sequence $\{\chi_{D(p_n)}:p_n\in \mathbb{Q}^2\}$ of elements of $Y$. If $(p_n)_{n\in \omega}$ is unbounded, then
there is a subsequence $(p_{n_k})_{k\in\omega}$ with $\| p_{n_k} \|\to +\infty $, whence
$\chi_{D(p_{n_k})}$ converge to $\chi_{\emptyset}$ and we are done in this case. Suppose that $(p_n)_{n\in \omega}$ is bounded in $\RR^2$. By passing to a suitable subsequence, we may without loss of generality assume that $(p_n)_{n\in \omega}$ converges to a point $p\in \RR^2$. In particular, for sufficiently large $n$, the set $\partial D(p_n)\cap\partial D(p)=\{a_n,b_n\}$ for some distinct points $a_n,b_n\in\RR^2$. The points $a_n$ and $b_n$ partition the circle $\partial D(p)$ into two open arcs $\Gamma_n$ and $\Delta_n$. One of them, say $\Gamma_n$, is contained in $D(p_n)$ whereas $\Delta_n\cap D(p_n)=\emptyset$ (cf. Figure \ref{figure3}; now the point $p_n$ is not necessarily in $U$).

By compactness of $\partial D(p)$, there are subsequences $(a_{n_k})_{k\in \omega}$ and
$(b_{n_k})_{k\in \omega}$ convergent to some points $a\in \partial D(p)$ and $b\in \partial D(p)$, respectively. Since points $p_n$ converge to $p$, we must have $d(a,b)=2$, where $d(\cdot,\cdot)$ is the distance between points in $\RR^2$. At least one of the following three cases holds:

\medskip

\textit{Case 1:} The set $\{k\in \omega: a\in \Gamma_{n_k}\}$ is infinite. As $\Gamma_{n}\subseteq D(p_n)$ and $\Delta_n\cap D(p_n)$, it follows that $a\in D(p_{n_k})$ and $b\notin D(p_{n_k})$ for infinitely many $k$'s. Therefore, there is a subsequence of $\{D(p_{n_k}):k\in \omega\}$, convergent to
$L_1(p,\theta)$ or $L_2(p,\theta)$, for some $\theta$.

\textit{Case 2:} The set $\{k\in \omega: b\in \Gamma_{n_k}\}$ is infinite. This case is analogous to the previous one.

\textit{Case 3:} Both $\{k\in \omega: a\in \Gamma_{n_k}\}$ and $\{k\in \omega: b\in \Gamma_{n_k}\}$ are finite. It is easy to see that in this case the sequence $\{D(p_{n_k}):k\in \omega\}$ converges to $L_0(p,\theta)$, for some $\theta$.
\end{proof}

\begin{thrm}\label{KA main result}
The space $KA$ is not an $L\Sigma(\leq\omega)$-space.
\end{thrm}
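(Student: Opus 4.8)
The plan is to contradict condition~$(B)$ of Proposition~\ref{LSigma-characterization}: I assume that $\CC=\{C_n:n\in\omega\}$ is a countable family of closed subsets of $KA$ with $\bigcup\CC=KA$ and with $M_x:=\bigcap\{C_n:x\in C_n\}$ metrizable for every $x\in KA$, and derive a contradiction.

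\smallskip\noindent\emph{Step 1: a rigid, intertwined skeleton.} For $a\in\RR^2$ put $S_a=\{\chi_{L_i(a,\theta)}:i\in\{0,1,2\},\ \theta\in\RR\}$. By Lemma~\ref{Lemma-representation of K} the sets $S_a$ are pairwise disjoint (from $\chi_{L_i(a,\theta)}$ one recovers $D(a)$, hence $a$), each is closed in $KA$, and $KA=Y\cup\bigcup_{a\in\RR^2}S_a\cup\{\chi_\emptyset\}$. Each $S_a$ is a compact non‑metrizable space modelled on the split interval: the points $\chi_{L_0(a,\theta)}$ are relatively isolated in $S_a$, and deleting them leaves a compact subspace $A_a$ — closed in $S_a$, with no isolated points — which is a copy of the double arrow space over a circle. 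Two properties matter. \emph{Rigidity (R):} a closed subspace of $S_a$ is metrizable iff it is countable (an uncountable subspace of the split interval is never metrizable, and an uncountable relatively discrete set of points $\chi_{L_0(a,\theta)}$ would give a compact set of uncountable spread, impossible if metrizable); in particular every nonempty relatively open subset of $A_a$ is uncountable and non‑metrizable. \emph{Tangling (T):} the circles $S_a$ are pairwise disjoint yet densely intertwined — by the computations behind Claim~\ref{claim} and Lemma~\ref{Lemma-representation of K}, if $a_n\to a$ in $\RR^2$ along a fixed direction then any cluster point in $KA$ of a sequence of points $z_n\in S_{a_n}$ lies in $S_a$, in fact in a fixed three‑element subset of $S_a$ determined by that direction alone; consequently each relatively isolated point $\chi_{L_0(a,\theta)}$ of $S_a$ is a genuine accumulation point, in $KA$, of $\bigcup\{S_{a'}:a'$ on the ray issuing from $a$ in direction $\theta+\tfrac{\pi}{2}\}$.

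\smallskip\noindent\emph{Step 2: Baire category and pigeonholing.} Each $A_a$ is compact without isolated points, hence Baire, so $A_a=\bigcup_n(C_n\cap A_a)$ is not a union of nowhere dense sets; let $n(a)$ be least with $\Int_{A_a}(C_{n(a)}\cap A_a)\neq\emptyset$. Since $\RR^2=\bigcup_m\{a:n(a)=m\}$, Baire's theorem yields $m$ for which $\{a:n(a)=m\}$ is non‑meager, hence dense in some ball $\mathcal O\subseteq\RR^2$. Fix a countable base $\{W_k\}$ of the double arrow space and, for each $a$, transport it to $A_a$ through the canonical homeomorphism; pigeonholing once more we obtain a single $k$ such that $G:=\{a\in\mathcal O:$ the image of $W_k$ in $A_a$ lies in $C_m\}$ is non‑meager. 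Since the open set of points at which $G$ is locally meager cannot fill $\mathcal O$, pick $b\in\mathcal O$ for which $G$ meets every neighbourhood of $b$ in a non‑meager set; then $G$ approaches $b$ from comeagerly many directions, and so, applying (T) direction by direction and then closing up, $C_m\cap S_b\supseteq A_b$ — an uncountable, and by~(R) non‑metrizable, closed subspace of $S_b$.

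\smallskip\noindent\emph{Step 3: producing a bad point — the main obstacle.} The crux is to upgrade Step~2 to a non‑metrizable trace $M_x$. A single $C_m$ with $C_m\cap S_b\supseteq A_b$ is not yet a contradiction, since $C_m$ itself is permitted to be non‑metrizable and the remaining $C_n$'s might conceivably trim it to countable sets around each point — and indeed a lone split circle \emph{is} an $L\Sigma(\leq\omega)$-space, so a purely local or purely cardinal argument cannot work (even $A(\mathfrak{c})$, the one‑point compactification of a discrete set of size continuum, is $L\Sigma(\leq\omega)$). What forbids the trimming is that the \emph{same} countable family $\CC$ is forced, through (T), to behave compatibly on the whole continuum of rotated copies $S_a$ piling up near $b$: one repeats the Baire–pigeonhole selection of Step~2 inside $C_m$ using the remaining $C_n$'s, invoking (R) at each stage to keep the pieces uncountable and the rigidity of the disk approximations in Claim~\ref{claim} to push fatness off the circles $S_a$ ($a\to b$) onto $S_b$, and the coherence this demands around a full circle of approach directions is incompatible with every trace on $S_b$ being countable. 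The outcome is a point $x\in S_b$ whose trace $M_x\cap S_b$ is an uncountable closed subspace of $S_b$, hence non‑metrizable by~(R), contradicting the choice of $\CC$; therefore $KA$ is not an $L\Sigma(\leq\omega)$-space. The delicate part — where the real work lies — is organising the nested selections so that they stay fat \emph{along a single} $S_b$ while the associated centres converge inside $\mathcal O$.
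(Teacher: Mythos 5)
Your Step 3 is not a proof but an acknowledged placeholder, and that is exactly where the theorem's difficulty lies. You correctly observe that producing one closed set $C_m$ with $C_m\cap S_b\supseteq A_b$ is not a contradiction (a single split circle is $L\Sigma(\leq\omega)$), so one must control \emph{all} the sets $C_n$ containing some point over $b$ simultaneously; but the promised ``repeat the Baire--pigeonhole selection inside $C_m$ using the remaining $C_n$'s'' is never organized into an actual induction, and you say yourself that ``the real work lies'' in doing so. There is also a concrete flaw earlier, in Step 2: from the choice of $b$ as a point at which $G$ is non-meager in every neighbourhood you conclude that $G$ approaches $b$ from comeagerly many directions. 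That implication is false --- an open sector, or a parabolic cusp region $\{(x,y):0<x<1,\ |y|<x^2\}$ with $b$ its apex, is non-meager in every neighbourhood of $b$ yet approaches $b$ from an arbitrarily thin set of directions, in the second case essentially one direction, which by your own tangling principle (T) yields only finitely many points of $S_b$ in $C_m$, not $A_b$. A repair would need $G$ to have the Baire property and $b$ to be chosen inside an open set where $G$ is comeager (then Kuratowski--Ulam in polar coordinates gives the directions), but since $C_m$ is only a closed subset of the non-metrizable cube $\{0,1\}^{\mathbb{R}^2}$, the definability of $G$ is itself unclear and is not addressed.

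The paper's proof shows what the missing idea is: instead of analysing the covers one set at a time, it uses the centre map $s:KA\to\mathbb{R}^2\cup\{\infty\}$, which is sequentially continuous and hence continuous by Bourgain--Fremlin--Talagrand. Then each $A_n=s(C_n)\cap\mathbb{R}^2$ is closed (so there are no Baire-property issues), and a single Baire category application picks $p\in\mathbb{R}^2\setminus\bigcup_n\bigl(A_n\setminus\Int A_n\bigr)$. For this $p$, \emph{every} $C_n$ containing \emph{any} $\xi$ with $s(\xi)=p$ has $p\in\Int A_n$, so one can approach $p$ inside $A_n$ along the segment towards the midpoint of any prescribed half-circle; closedness of $C_n$ then forces $\chi_{L_0(p,\theta)}\in C_n$ for all $\theta$, hence all of $K_p\subseteq C_n$. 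Thus the trace $\bigcap\{C_n:\xi\in C_n\}$ contains a copy of the split interval in one stroke, which is precisely the simultaneous, all-directions control that your Steps 2--3 lack. As it stands, your proposal has a genuine gap at its central step.
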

\begin{proof}
Let $\mathbb{R}^2\cup\{\infty\}$ be the one point compactification of $\RR^2$.
Consider the function $s:\mathcal{D}\cup\mathcal{B}\to \mathbb{R}^2$ that assigns to each disk its center. Identifying a set with its characteristic function and declaring that $s(\emptyset)=\infty$, we may view $s$ as a map $s:KA\to \mathbb{R}^2\cup\{\infty\}$ (cf. Lemma \ref{Lemma-representation of K}). Observe that $s$ is sequentially continuous. True, if a sequence $(\xi_n)_{n\in\omega}$ of elements of $KA$ converges to $\chi_\emptyset$ then the sequence $(s(\xi_n))_{\omega}$ has no convergent subsequence so it converges to $\infty$. If $(\xi_n)_{n\in\omega}$ converges to $\xi\in KA\setminus\{\chi_\emptyset\}$, then the sequence $(s(\xi_n))_{\omega}$ must converge
to the center of the disk defining $\xi$. Since $s$ is sequentially continuous, it is continuous by Bourgain-Fremlin-Talagrand theorem.

We will show that condition $(B)$ of Proposition \ref{LSigma-characterization} fails. To this end,
fix a countable compact cover $\mathcal{C}=\{C_n:n\in \omega\}$ of $KA$.
Since $s$ is continuous, the set
$$A_n=s(C_n)\cap\RR^2$$
is closed in $\mathbb{R}^2$, for all $n\in \omega$.
By the Baire Category Theorem there is
$$p=\la p_1,p_2 \ra\in \RR^2\setminus \bigcup_{n\in\omega}\left(A_n\setminus \Int A_n \right).$$
We will show that if $\xi\in s^{-1}(p)$, then the set $L=\bigcap\{C_n\in \mathcal{C}:\xi\in C_n\}$ contains a copy of the split interval $\mathbb{S}$ and hence, it is nonmetrizable.

Consider
$$K_p=s^{-1}(p)\cap\{\chi_B:B\in \mathcal{B}\},$$
i.e. $K_p$ consists of all elements of $KA$ corresponding to non-open disks centered at $p$.
Fix $\xi\in s^{-1}(p)$ and let $$L=\bigcap\{C_n\in \mathcal{C}:\xi\in C_n\}.$$
Note that $\xi\in C_n$ implies that $s(\xi)=p\in A_n$. By the choice of $p$ this means that $p\in \Int A_n$.

\begin{claim}
 We have $K_p\subseteq C_n$ for all $n$ with $\xi\in C_n$.
\end{claim}
\begin{proof}
Let us first show that $\chi_{L_0(p,\theta)}\in C_n$ for all $\theta$. To this end, fix $\theta$ and
consider the following point $m\in \mathbb{R}^2$:
$$m=\la p_1+\cos(\theta+\pi/2),p_2+\sin(\theta+\pi/2)\ra.$$
Note that $m$ is the midpoint of the arc lying on the boundary of $L_0(p,\theta)$. Let
$I$ be the open interval in $\RR^2$ joining $m$ and $p$. It follows from $p\in \Int A_n$ that there is a
sequence $\{a_k\in I\cap A_n:k\in\omega\}$ convergent to $p$. Since $a_k\in A_n$, for each $k$ we may find $\xi_k\in C_n$ so that $s(\xi_k)=a_k$. It is easy to see that
$(\xi_k)_{k\in\omega}$ pointwise converges to $\chi_{L_0(p,\theta)}$. Therefore
$\chi_{L_0(p,\theta)}\in C_n$ because $C_n$ is closed.

The sequences
$(\chi_{L_0(p,\theta+1/i)})_{n\in \omega}$ and  $(\chi_{L_0(p,\theta-1/i)})_{i\in \omega}$ pointwise converge to $\chi_{L_1(p,\theta)}$ and $\chi_{L_2(p,\theta)}$, respectively.
As we have already proved
$$\chi_{L_0(p,\theta+1/i)},\; \chi_{L_0(p,\theta-1/i)}\in C_n.$$
Hence
$\chi_{L_1(p,\theta)}, \chi_{L_2(p,\theta)}\in C_n$.
\end{proof}

Define a map $f:\mathbb{S}\to K_p$ by letting
$$f(x^-)=\chi_{L_2(p,x)},\quad f(x^+)=\chi_{L_1(p,x)}.$$
It can be readily checked that $f$ is
one-to-one and continuous so it is an embedding. By Claim, $K_p\subseteq L$ so $L$ contains a copy of $\mathbb{S}$.
\end{proof}

\begin{rem}\label{remark_CD}
We may consider the following two subclasses of the class $\mathcal{R}$ of all Rosenthal compacta. By $\mathcal{RK}$ denote the class of compact spaces that are homeomorphic to a subspace of $B_1(C)$ of Baire class one functions on the Cantor set $C$. A compact space $K$ belongs to the class
$\mathcal{CD}$ if $K$ is homeomorphic to a compact subset of $\mathbb{R}^X$ consisting of functions with countably many discontinuities, for some Polish space $X$. It is well known that $\mathcal{CD}\subseteq \mathcal{RK}\subseteq\mathcal{R}$ (see \cite{MP}). The first example of a Rosenthal compact space $K$ which is not in $\mathcal{RK}$ was given by Pol in \cite{Pol}. However only very recently the first example of a space distinguishing $\mathcal{CD}$ and $\mathcal{RK}$ was found by the first author and Todorcevic \cite{AT}.

Note that $KA$ may be viewed as a compact subspace of $B_1(\mathbb{R}^2\cup\{\infty\})$ of Baire class one functions on the one point compactifications of the plane $\RR^2$.  Thus, $KA\in \mathcal{RK}$.
If $K\in \mathcal{CD}$, then $K$ is an $L\Sigma(\leq\omega)$-space (see \cite[Proposition 2.14]{KOS}). Hence, by Theorem \ref{KA main result}, we get $KA\notin \mathcal{CD}$. Moreover, since the class of $L\Sigma(\leq\omega)$-spaces is stable under taking continuous images \cite[Proposition 2.4]{KOS}, the space $KA$ is not a continuous image of any space $K\in \mathcal{CD}$.
\end{rem}

\section{Example B: A countably supported Rosenthal compact that is not an $L\Sigma(\leq\omega)$-space}

In this section, we will construct a space $KB$ from Example B. The space $KB$ which we are going to define will be a compact subspace of $B_1(\RR^2)\cap\Sigma(\{0,1\}^{\RR^2})$.

Let $\mathcal{A}$ be the collection of all subsets $A$ of $\RR^2$ that simultaneously satisfy the following two conditions:
\begin{align}
&\mbox{If } \la x_1,y_1\ra,\ldots , \la x_n,y_n\ra\in A \mbox{ and } x_1<\ldots < x_n\mbox{ then } \forall{i}\;\;
|x_{i+1}-x_i|\leq 2^{-i} \label{condition 1}.\\
&\mbox{For every } x\in \RR\;\; \mbox{the set } (\{x\}\times \RR)\cap A \mbox{ is at most 1-element}. \label{condition 2}
\end{align}
Let $KB=\{\chi_A:A\in \mathcal{A}\}$ be considered as a subspace of the Cantor cube $\{0,1\}^{\mathbb{R}^2}$. Since the definition of $KB$ is of finite character, $KB$ is compact. Let $\pi:\RR^2\to \RR$ be the projection onto the first coordinate. Note that condition \eqref{condition 1} implies that for every $A\in \mathcal{A}$, the set $\pi(A)$ has at most one accumulation point. Hence, by \eqref{condition 2}, the set $A$ is countable and has at most one non-isolated point. It follows that all elements of $KB$ are of the first Baire class and belong to the $\Sigma$-product $\Sigma(\{0,1\}^{\RR^2})$.

In the remaining part of this section, we will use the following notation.
For an infinite $A\in \mathcal{A}$, by $\alpha(A)$ we will denote the unique accumulation point of $\pi(A)$. For $t\in \RR^2$, the map $p_t:KB\to \{0,1\}$ is the projection onto coordinate $t$.

\begin{thrm}\label{thrm:ExampleB}
The space $KB$ is not an $L\Sigma(\leq\omega)$-space.
\end{thrm}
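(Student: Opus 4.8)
The plan is to mimic the strategy used for $KA$ in Theorem \ref{KA main result}: exhibit a continuous ``retraction-like'' map from $KB$ onto a metrizable space, use the Baire Category Theorem to locate a point where every closed set from a hypothetical countable cover behaves well, and then show that the corresponding intersection $\bigcap\{C\in\mathcal{C}:\xi\in C\}$ contains a non-metrizable subspace, contradicting condition (B) of Proposition \ref{LSigma-characterization}.

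First I would set up the analogue of the ``center'' map. For $KA$ the map $s$ sent a disk to its center; here the natural invariant attached to an infinite $A\in\mathcal{A}$ is the accumulation point $\alpha(A)\in\RR$ together with the vertical fiber over it, i.e. the point of $A$ lying on the line $\{\alpha(A)\}\times\RR$ if there is one (by \eqref{condition 2} there is at most one). So I would define a map $s\colon KB\to \overline{\RR}$ or into $\RR^2\cup\{\infty\}$ (one-point compactification), sending $\chi_A$ to $\alpha(A)$ (or to the accumulation point of $A$ in $\RR^2$ when it exists, and to a point at infinity for the finite sets $A$ — one must be a little careful since finite $A$ have no accumulation point, and one should check which finite configurations actually appear in $KB$). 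The key claims are: $s$ is well-defined on $KB$, and $s$ is sequentially continuous, hence continuous by the Bourgain--Fremlin--Talagrand theorem, exactly as in the proof for $KA$. Sequential continuity should follow from analysing what pointwise limits of the $\chi_{A_n}$ look like: condition \eqref{condition 1} forces the $\pi(A_n)$ to cluster, and a diagonal/subsequence argument identifies the limit's accumulation point as the limit of the $\alpha(A_n)$.

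Next, suppose toward a contradiction that $KB$ is $L\Sigma(\leq\omega)$ and fix a countable closed cover $\mathcal{C}=\{C_n:n\in\omega\}$ as in Proposition \ref{LSigma-characterization}(B). Then $A_n:=\overline{s(C_n)}\cap\RR$ (or the appropriate trace in $\RR$) is closed, and by the Baire Category Theorem there is a point $p\in\RR$ avoiding every boundary $A_n\setminus\Int A_n$; equivalently, whenever $p\in A_n$ we have $p\in\Int A_n$. Now I would fix some $\xi=\chi_{A^\ast}\in s^{-1}(p)$ and set $L=\bigcap\{C_n:\xi\in C_n\}$. The heart of the argument, as with $KA$, is a Claim: some fixed non-metrizable ``fan'' $K_p\subseteq s^{-1}(p)$ is contained in every $C_n$ with $\xi\in C_n$, hence in $L$. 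Concretely, $K_p$ should consist of characteristic functions $\chi_A$ of sets $A\in\mathcal{A}$ whose accumulation point is $p$, and which approach $p$ from one side or the other along the real line with various vertical heights; the freedom in the ``second coordinate at $p$'' or in the side of approach should let one embed a split-interval-type space $\mathbb{S}$, which is compact non-metrizable. To prove each such $\chi_A\in C_n$: since $p\in\Int A_n$, pick points $a_k\to p$ with $a_k\in A_n$ and with the $a_k$ lying on a suitable curve so that the corresponding elements $\xi_k\in C_n\cap s^{-1}(a_k)$ pointwise converge to $\chi_A$; closedness of $C_n$ finishes it. One then uses further limits (from the two sides, taking one-sided limits as in the $L_1/L_2$ business) to get all the ``boundary'' elements of $K_p$.

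The main obstacle I expect is pinning down exactly which non-metrizable subspace sits inside $s^{-1}(p)$ and verifying that it is forced into every relevant $C_n$. Unlike the disk picture, where the half-circumferences give an obvious split interval, here the order-type/topology of $s^{-1}(p)$ comes from the interaction of condition \eqref{condition 1} (geometric-series spacing of the first coordinates, which pins how fast points must approach $p$) with condition \eqref{condition 2} (at most one point over each vertical line). I would need to check carefully that a one-parameter family of admissible approaching configurations exists — parametrised so that a split-interval embedding $f\colon\mathbb{S}\to s^{-1}(p)\subseteq KB$ is continuous and injective — and that the density/interior property at $p$ lets me realise each member of this family as a pointwise limit of elements of a single $C_n$. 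A secondary technical point is the correct treatment of the ``degenerate'' elements (finite sets, or infinite sets with no vertical point over $p$) so that the cover and the map $s$ are genuinely defined on all of $KB$; I expect this to be routine once the finite-character description of $KB$ is unwound, but it must be stated cleanly.
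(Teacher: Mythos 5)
Your outline transplants the $KA$ argument, but both of its pillars break for $KB$. First, the proposed ``center'' map is not continuous, not even sequentially: if $A\in\mathcal{A}$ is infinite and $F_1\subseteq F_2\subseteq\cdots$ are finite subsets with $\bigcup_m F_m=A$, then each $F_m\in\mathcal{A}$ and $\chi_{F_m}\to\chi_A$ pointwise, while your $s$ sends every $\chi_{F_m}$ to the point at infinity and $\chi_A$ to $\alpha(A)\in\RR$; conversely, infinite elements can converge to characteristic functions of finite sets (fix $A_k=\{\la 0,0\ra\}\cup\{\la x_i,k\ra:i\geq 1\}$ with the $x_i$ increasing suitably: then $\chi_{A_k}\to\chi_{\{\la 0,0\ra\}}$ while $\alpha(A_k)$ is constant), so no assignment of values to the finite elements repairs this. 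Unlike $KA$, where the only degenerate element is $\chi_\emptyset$ and every other element is determined by a disk and its center, the finite elements of $KB$ are unavoidable limits. Second, and more fundamentally, even granting some continuous invariant and a Baire-category point $p$ avoiding the boundaries of the sets $\overline{s(C_n)}\cap\RR$, the key Claim has no analogue: knowing that $s(C_n)$ contains a neighborhood of $p$ only yields, for each $a$ near $p$, \emph{some} element of $C_n$ with accumulation point $a$, with no control on its heights or on the positions of its points, so limits of such elements need not be any prescribed member of a family $K_p$ over $p$ (they may be $\chi_\emptyset$ or a finite set). In $KA$ the fibre of $s$ over a center is essentially one disk, which is exactly what made the radial-limit computation work; in $KB$ the fibre over $p$ is enormous and the cover may treat its members in unrelated ways. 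A smaller but telling slip: $KB\subseteq\Sigma(\{0,1\}^{\RR^2})$ is Corson compact and separable Corson compacta are metrizable, so no copy of the split interval $\mathbb{S}$ embeds in $KB$; the only nonmetrizable witnesses available are of the type $K(A)=\{\chi_A\}\cup\{\chi_{A_y}:y\in\RR\}$, i.e.\ copies of the one-point compactification of a discrete set of size $\mathfrak{c}$.

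The paper's proof is accordingly of a different nature. It restricts to the family $\mathcal{A}_0$ of infinite $A$ with $\alpha(A)\notin\pi(A)$ satisfying the tameness condition \eqref{tame}, notes that metrizability of $\bigcap\{C\in\mathcal{C}:\chi_A\in C\}$ always produces an index $n$ and a height $y$ with $\chi_A\in C_n$ and $\chi_{A_y}\notin C_n$, and then runs a fusion/diagonalization: it recursively picks minimal such indices $n_1<n_2<\cdots$, basic clopen sets $U_k$ disjoint from $C_{n_k}$ determined by finite sets $F^k_0,F^k_1$, and shrinking nonempty families $\mathcal{A}_k$ controlled by thresholds $\gamma_k$, so that the single set $S=\bigcup_k F^k_1$ satisfies $K(S)\subseteq\bigcap\{C\in\mathcal{C}:\chi_S\in C\}$, a contradiction. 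The nonmetrizable witness is thus \emph{constructed from the cover}, not located in advance by a Baire-category argument in the plane. To salvage your plan you would have to explain how a fixed uncountable family over a single point $p$ is forced into every $C_n$ containing $\xi$, and the image condition alone cannot do that.
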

\begin{proof}
Striving for a contradiction, 
suppose that $\mathcal{C}=\{C_1,C_2,\ldots\}$ is a countable closed cover of $KB$ such that for any $\xi\in KB$, the set
$$C_{\xi}=\bigcap\{C\in \mathcal{C}:\xi\in C\}$$
is metrizable (cf. Proposition \ref{LSigma-characterization}). Consider the family $\mathcal{A}_0\subseteq \mathcal{A}$ that consists of
all infinite elements $A$ of $\mathcal{A}$ that satisfy the following conditions:
\begin{align}
&\alpha(A)\notin \pi(A). \\
&\mbox{If } \la x_1,y_1 \ra,\dots , \la x_n,y_n\ra\in A \mbox{ and } x_1<\dots <x_n, \mbox{ then } |\alpha(A)-x_i|<2^{-i}. \label{tame}
\end{align}

Take $A\in \mathcal{A}_0$.
Clearly, for any $y\in \RR$, the set
$$A_y=A\cup\{\la \alpha(A),y\ra\}$$
belongs to $\mathcal{A}$.
Let
$$K(A)=\{\chi_A\}\cup\{\chi_{A_y}:y\in \RR\}.$$
It is easy to see that $K(A)$, considered as a subspace of $\{0,1\}^{\RR^2}$, is a copy of the one point compactification of a discrete set of size $\mathfrak{c}$. Hence, the set
$$\{y\in \RR:\chi_{A_y}\in C_{\chi_A}\}$$
must be countable, as $C_{\chi_A}$ is metrizable. In particular,
for every $A\in \mathcal{A}_0$ and every finite set $F\subseteq \RR$, we have
\begin{equation}\label{eq:induction}
 \{n:\chi_A\in C_n \mbox{ and }\chi_{A_y} \notin C_n \mbox{ for some }y\in \RR\setminus F\}\neq\emptyset.
\end{equation}
Let
$$n_1=\min\{n: \exists A\in \mathcal{A}_0\;\; \chi_A\in C_n\mbox{ and }\chi_{A_y} \notin C_n \mbox{ for some }y\in \RR\}.$$
Fix $X^0\in \mathcal{A}_0$ and $a(0)\in \RR$ such that $\chi_{X^0}\in C_{n_1}$ and $\chi_{X^0_{a(0)}}\notin C_{n_1}$.
Let $U_1$ be a basic clopen neighborhood of $\chi_{X^0_{a(0)}}$ disjoint from $C_{n_1}$. The set $U_1$ can be taken of the form
$$U_1=\bigcap\{p^{-1}_t(0):t\in F^1_0\}\cap \bigcap\{p^{-1}_t(1):t\in F^1_1\},$$
for some finite sets $F^1_0, F^1_1\subseteq \RR^2$.
Write $F^1_1=\{\la x^1_1,y^1_1\ra,\dots , \la x^1_{m(1)},y^1_{m(1)}\ra \}$, where
$x^1_1<\dots <x^1_{m(1)}$ and $x^1_{m(1)}=\alpha(X^0),\; y^1_{m(1)}=a(0)$.
Let
$$\gamma_1=\min\{x^1_i+2^{-i}:i=1,\dots ,m(1)\}.$$
Since $X^0\in \mathcal{A}_0$ and $F^1_1\subseteq X^0_{a(0)}$, by definition of $\gamma_1$ we get $\gamma_1>\alpha(X^0)$ (see \eqref{tame}). Also, we infer from \eqref{tame} that the family
$$\mathcal{A}_1=\{A\in \mathcal{A}_0:\chi_A\in U_1 \mbox{ and } \alpha(A)<\gamma_1\},$$
is nonempty.

Inductively, we construct:
\begin{itemize}
\item an increasing sequence $n_1<n_2<\dots$ of positive integers,
\item increasing sequences of finite sets $F^1_0\subseteq F^2_0\subseteq \dots$ and $F^1_1\subseteq F^2_1\subseteq\dots$
\item a sequence $\gamma_1\geq \gamma_2\geq\dots$ of real numbers and
\item a decreasing sequence of nonempty families $\mathcal{A}_0\supseteq \mathcal{A}_1\supseteq \mathcal{A}_2\supseteq\dots$
\end{itemize}
such that
if
$$F_1^k=\{\la x_1,y_1\ra,\dots , \la x_{m(k)},y_{m(k)}\ra\}\mbox{, where }
x_1<\dots <x_{m(k)} \mbox{ and } m(1)<m(2)<\ldots $$ and if
$$U_k=\bigcap\{p^{-1}_t(0):t\in F^k_0\}\cap \bigcap\{p^{-1}_t(1):t\in F^k_1\},$$ then
\begin{enumerate}[(i)]
\item $\gamma_k=\min\{x_i+2^{-i}:i=1,\dots ,m(k)\}$,
\item $\mathcal{A}_k=\{A\in \mathcal{A}_0:\chi_{A}\in U_k \mbox{ and }\alpha(A)<\gamma_k\}$,
\item $U_k\cap C_{n_k}=\emptyset$
\item $n_{k+1}=\min\{n:\exists A\in \mathcal{A}_k\;\;\chi_A\in C_n \mbox{ and } \chi_{A_y}\notin C_n \mbox{ for some }y\in \RR\}.$
\item $x_{m(k+1)}<\gamma_k$
\end{enumerate}

Fix $k\geq 1$ and suppose that $n_i\;, F^i_0,\; F^i_1,\; \gamma_i$ and $\mathcal{A}_i$ are constructed for all $i\leq k$.
Let $F$ be the projection of the set $F^k_0$ onto the second coordinate. Define (cf. \eqref{eq:induction})
$$n_{k+1}=\min\{n: \exists A\in \mathcal{A}_k\;\; \chi_A\in C_n\mbox{ and }\chi_{A_y} \notin C_n \mbox{ for some }y\in \RR\setminus F\}.$$
Since $\mathcal{A}_k\subseteq \mathcal{A}_{k-1}$, we infer from the inductive assumption on $n_k$ (cf. (iii)) that $n_k<n_{k+1}$.
Pick $X^{k}\in \mathcal{A}_k$ and $a(k)\in \RR\setminus F$ with $\chi_{X^{k}}\in C_{n_{k+1}}$ and $\chi_{X^{k}_{a(k)}}\notin C_{n_{k+1}}$. Note that $X^k\in \mathcal{A}_k$ and $a(k)\notin F$ imply that $\chi_{X^{k}_{a(k)}}\in U_k$.

We take $U_{k+1}\subseteq U_k$ a basic clopen neighborhood of $\chi_{X^k_{a(k)}}$ disjoint from $C_{n_{k+1}}$ of the form
$$U_{k+1}=\bigcap\{p^{-1}_t(0):t\in F^{k+1}_0\}\cap \bigcap\{p^{-1}_t(1):t\in F^{k+1}_1\},$$
for some finite sets $F^{k+1}_0, F^{k+1}_1\subseteq \RR^2$ containing $F^k_0$ and $F^k_1$, respectively. Write $F^{k+1}_1=\{\la x_1,y_1\ra ,\dots , \la x_{m(k+1)},y_{m(k+1)}\ra\}$, where $m(k+1)>m(k)$,
$x_1<\dots <x_{m(k+1)}=\alpha(X^k)$ and $y_{m(k+1)}=a(k)$. We set
$$\gamma_{k+1}=\min\{x_i+2^{-i}:i=1,\dots ,m(k+1)\}.$$
Since $F^k_1\subseteq F^{k+1}_1$, we get $\gamma_{k+1}\leq \gamma_k$, by (i).
Similarly as for $\gamma_1$, we argue that
$\gamma_{k+1}>\alpha(X^k)$ and that the family
$$\mathcal{A}_{k+1}=\{A\in \mathcal{A}_k:\chi_A\in U_{k+1} \mbox{ and } \alpha(A)<\gamma_{k+1}\},$$
is nonempty. This finishes the inductive construction.

Let
$$S=\bigcup_{k=1}^\infty F^k_1.$$
The set $S$ is infinite because for each $k$, we have $(\alpha(X^k),a(k))\in F^{k+1}_1\setminus F^k_1$. Moreover, from (v) and $\gamma_1\geq\gamma_2\geq\dots$, we get $S\in \mathcal{A}_k$ for all $k=0,1,\dots$.

\begin{claim}
$K(S)\subseteq \bigcap\{C\in \mathcal{C}:\chi_{S}\in C\}$.
\end{claim}
\begin{proof}
Pick $z\in \RR$. Aiming at a contradiction, suppose $\chi_{S}\in C_n$ and $\chi_{S_z}\notin C_n$, for some $n$. Since $n_1<n_2<\ldots$, there exists $k$ such that $n<n_{k+1}$. But $S\in \mathcal{A}_k$, so this contradicts the choice of $n_{k+1}$ (see (iv)).
\end{proof}
Since $K(S)$ is nonmetrizable being a copy of the one point compactification of an uncountable discrete set, it follows that $\bigcap\{C\in \mathcal{C}:\chi_{S}\in C\}$ is nonmetrizable, contradicting the assumption on the cover $\mathcal{C}$.
\end{proof}

Denote by $\mathcal{E}$ the class of Eberlein compacta of cardinality not exceeding continuum. We say that a space $X$ is an \textit{$L\Sigma(\mathcal{E})$-space} if there is a separable metrizable space $M$ and
a compact-valued upper semicontinuous onto map $p:M\to X$ such that $p(z)\in \mathcal{E}$ for all $z\in M$. If $X$ is compact this is equivalent to saying that there is a countable closed cover $\mathcal{C}$ of $X$ such that for every $x\in X$ the intersection $\bigcap\{C\in \mathcal{C}:x\in C\}$ is Eberlein of cardinality not exceeding continuum (cf. Proposition \ref{LSigma-characterization}).

\begin{prop}\label{proposition:LSigma(Eberlein)}
The space $KB$ is an $L\Sigma(\mathcal{E})$-space.
\end{prop}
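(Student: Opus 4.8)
The plan is to exhibit a countable closed cover $\mathcal{C}$ of $KB$ witnessing the $L\Sigma(\mathcal{E})$-property, i.e. such that for each $\xi\in KB$ the intersection $C_\xi=\bigcap\{C\in\mathcal{C}:\xi\in C\}$ is Eberlein compact of cardinality at most $\mathfrak{c}$. The natural parameter to cut along is the ``location'' $\alpha(A)$ of the accumulation point together with a finite initial segment of $A$. More precisely, I would fix a countable base $\{q_j\}$ of rationals (or a countable family of rational intervals) and, for each $m\in\omega$ and each finite tuple $\sigma=(\langle r_1,s_1\rangle,\dots,\langle r_m,s_m\rangle)$ of rational points in $\RR^2$ with $r_1<\dots<r_m$ satisfying the spacing condition \eqref{condition 1}, together with a rational interval $J$, let
$$C_{\sigma,J}=\{\chi_A\in KB: \sigma\subseteq A \text{ and } A\setminus(\text{initial part determined by }\sigma)\subseteq J\times\RR\}\cup(\text{limit points}),$$
taking closures in $KB$. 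The point is that once the initial segment and the interval $J$ controlling the tail are pinned down, and $J$ is short (length $<2^{-m}$, say), the set $A$ lives, apart from its fixed initial piece, inside a thin vertical strip; intersecting over all members of $\mathcal{C}$ containing $\chi_A$ forces the tail of $A$ to cluster at the single value $\alpha(A)$ in the first coordinate. What remains free is essentially a sequence of points converging to a vertical line $\{\alpha(A)\}\times\RR$, and by condition \eqref{condition 2} such sequences form (the one-point compactification structure of) something Eberlein.

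The key steps, in order, are: (1) Define the countable family $\mathcal{C}$ as above, indexed by finite rational data, and check each $C\in\mathcal{C}$ is closed (immediate, being defined by finitely many coordinate conditions together with a closed ``tail'' restriction, so it is closed in the compact $KB$) and that $\mathcal{C}$ covers $KB$ (every $\chi_A$ lies in the member with empty $\sigma$ and $J=\RR$, or one can arrange the trivial member $KB\in\mathcal{C}$). (2) Fix $\xi=\chi_A\in KB$ and analyze $C_\xi$. If $A$ is finite, $\chi_A$ is an isolated-type point and $C_\xi$ is easily metrizable, hence Eberlein; so assume $A$ infinite with accumulation value $\alpha=\alpha(A)$. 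Using the spacing condition and the fact that for every $m$ there is a member of $\mathcal{C}$ containing $\chi_A$ whose tail-interval $J_m$ is a rational interval of length $<2^{-m}$ around $\alpha$ (because the points of $A$ beyond index $m$ all lie within $2^{-m}$ of $\alpha$ in the first coordinate — this is exactly the content of \eqref{condition 1}), conclude that any $\chi_B\in C_\xi$ has $B$ with the same fixed ``skeleton'' and with $\pi(B)$ accumulating only at $\alpha$. (3) Identify $C_\xi$ explicitly: it is homeomorphic to a closed subspace of a product of the form $K(A)$-type one-point compactifications over the possible tail behaviours, but crucially cardinality $\le\mathfrak{c}$ and, being a closed subspace of $\Sigma(\{0,1\}^{\RR^2})$ that is additionally $\sigma$-compact-like in its ``scatter'', one shows it embeds into $c_0(\Gamma)$ for $|\Gamma|\le\mathfrak{c}$, i.e. it is Eberlein. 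The cleanest route here is: $C_\xi$ is a Corson compactum (closed subspace of a $\Sigma$-product) in which the set of points with infinitely many nonzero coordinates beyond the skeleton is ``controlled'' by the single real $\alpha$, making it scattered-over-metrizable of the right form, and any such space of weight $\le\mathfrak c$ that is additionally uniformly Eberlein-like is Eberlein; alternatively, invoke that $C_\xi$ is a continuous image / closed subspace of a suitable adequate compactum built from the thin strips, which is manifestly Eberlein. (4) Bound $|C_\xi|\le\mathfrak{c}$: each element is a characteristic function of a countable subset of $\RR^2$, and there are at most $\mathfrak{c}$ such, but one must check $C_\xi$ really has size $\le\mathfrak c$ rather than just $2^{\mathfrak c}$ — this follows because $C_\xi$ is separable (being Corson with a small $\pi$-character) or metrizable modulo a fiber of size $\le\mathfrak c$.

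The main obstacle I anticipate is step (3): verifying that $C_\xi$ is genuinely \emph{Eberlein} and not merely Corson compact. Membership in a $\Sigma$-product only gives Corson, and the whole point of Example B is that $KB$ itself is Corson but not $L\Sigma(\le\omega)$ — so the Eberlein-ness must come from the extra rigidity imposed by freezing $\alpha(A)$ and the skeleton. Concretely, after pinning the skeleton, the ``new'' coordinates that can be nonzero on members of $C_\xi$ form, for each choice of finite tail-length, a family of points in increasingly thin vertical strips clustering to a single vertical line; by \eqref{condition 2} at most one point per strip-column, so the support of any $\chi_B-\chi_A$ (on the non-skeleton part) is, for each $\varepsilon$, finite outside an $\varepsilon$-strip, which is precisely the combinatorial hallmark of an adequate \emph{Eberlein} (indeed uniformly Eberlein) family. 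I would make this rigorous by writing down an explicit reindexing of the relevant coordinates by a countable tree or by $\RR$ stratified into levels, showing each level contributes boundedly and the whole thing sits inside $c_0$, then quoting that weakly compact subsets of $c_0(\Gamma)$ are Eberlein. Checking the cardinality bound $\le\mathfrak c$ is then a bookkeeping matter: the tail of any $B$ with $\chi_B\in C_\xi$ is a countable set of rationals-in-strips-plus-one-limit-point, and such data is coded by a real.
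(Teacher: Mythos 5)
There is a genuine gap, and it sits upstream of the step you flagged. Your cover is built from \emph{positive} conditions: a member of $\mathcal{C}$ requires a prescribed finite set $\sigma$ of \emph{rational} points to be contained in $A$. But the points of a set $A\in\mathcal{A}$ are arbitrary points of $\RR^2$, so a typical $\chi_A\in KB$ lies in no member of your family with nonempty skeleton; the only members containing it are the trivial ones (empty $\sigma$, long $J$, or $KB$ itself). Hence the assertion in your step (2) --- that for every $m$ there is a member containing $\chi_A$ whose tail-interval has length $<2^{-m}$ around $\alpha(A)$ --- is false whenever $A$ has irrational coordinates and its projection is spread out before accumulating (which condition \eqref{condition 1} fully permits: the early gaps may be as large as $2^{-1},2^{-2},\dots$). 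For such $A$ your $C_{\chi_A}$ is not forced to cluster at $\alpha(A)$ at all; it contains all $\chi_B$ with $B$ inside a fixed wide strip $J\times\RR$, and the construction in the proof of Theorem \ref{thrm:ExampleB} can be carried out entirely inside such a strip (the total horizontal spread of any $A\in\mathcal{A}$ is at most $\sum_i 2^{-i}$), so that subspace is not $L\Sigma(\leq\omega)$ and therefore, by Tkachuk's theorem quoted before the corollary, not Eberlein of cardinality $\leq\mathfrak{c}$. So the proposed cover does not witness the $L\Sigma(\mathcal{E})$ property. A further defect: you pass to closures (``$\cup$ (limit points)'') in defining $C_{\sigma,J}$, but pointwise limits can lose the skeleton (e.g.\ $\chi_{\{\la 1/n,0\ra\}}\to\chi_\emptyset$), so even the nontrivial members, once closed, contain elements without the prescribed skeleton, undermining the ``same skeleton'' claim; for the same reason ``$A$ meets a given closed rational box'' is not a closed condition in $\{0,1\}^{\RR^2}$, so thickening the skeleton does not repair this.

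The paper's proof avoids positive conditions altogether, and this is the idea you are missing. It takes a countable base $\mathcal{B}$ of $\RR^2$ and sets $C_U=\{\xi\in KB:\xi(a)=0\mbox{ for all }a\in U\}$; these are obviously closed and cover $KB$. If $\chi_B\in\bigcap\{C\in\mathcal{C}:\chi_A\in C\}$ then $B$ avoids every basic open set that $A$ avoids, i.e.\ $\overline{B}\subseteq\overline{A}\subseteq A\cup(\{\alpha(A)\}\times\RR)$, and by condition \eqref{condition 2} at most one coordinate on the vertical line $\{\alpha(A)\}\times\RR$ can be nonzero. Hence the intersection embeds into $\{0,1\}^{A'}\times\sigma_1\bigl(\{0,1\}^{\{\alpha(A)\}\times\RR}\bigr)$ with $A'$ countable, a product of a metrizable compactum with the one-point compactification of a discrete set of size $\mathfrak{c}$, which is manifestly Eberlein of cardinality at most $\mathfrak{c}$. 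This also settles your step (3) without any appeal to adequate families or embeddings into $c_0(\Gamma)$. If you tried to salvage your skeleton-and-strip idea, you would have to encode it by which regions $A$ \emph{avoids}, at which point you recover exactly the paper's cover.
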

\begin{proof}
Let $\mathcal{B}$ be a countable basis for $\RR^2$.
For every $U\in \mathcal{B}$, let
$$C_U=\{\xi\in KB:\forall a\in U\;\; \xi(a)=0\}.$$
It is clear that $C_U$ is closed in $KB$. So the family
$$\mathcal{C}=\{C_U:U\in \mathcal{B}\},$$
is a countable closed cover of $KB$.

Fix $Y\in \mathcal{A}$.
Observe that if $\chi_A\in \bigcap\{C\in \mathcal{C}:\chi_Y\in C\}$, then $\overline{A}\subseteq\overline{Y}$ (the closures are taken in $\mathbb{R}^2$).
So if $Y$ is finite, then $\bigcap\{C\in \mathcal{C}:\chi_Y\in \mathcal{C}\}$ is finite.
Suppose that $Y$ is infinite and let $\alpha(Y)$ be the accumulation point of $\pi(Y)$. Let $Y'=Y\setminus ((\alpha(Y)\times \RR)$.
We have
$\overline{Y}\subseteq Y\cup (\alpha(Y)\times \RR)$ so restricting coordinates we get $\bigcap\{C\in \mathcal{C}:\chi_Y\in C\}\subseteq\{0,1\}^{Y'\cup (\alpha(Y)\times \RR)}$ and using condition \eqref{condition 2} we infer that $\bigcap\{C\in \mathcal{C}:\chi_Y\in C\}$ embeds into the product $$\{0,1\}^{Y'}\times \sigma_1\left(\{0,1\}^{\alpha(Y)\times \RR}\right),$$ where $\sigma_1\left(\{0,1\}^{\alpha(Y)\times \RR}\right)$ is the subset of $\{0,1\}^{\alpha(Y)\times \RR}$ consisting of elements with at most one nonzero coordinate.
Since $\sigma_1\left(\{0,1\}^{\alpha(Y)\times \RR}\right)$ is homeomorphic to the one point compactification of the discrete set of size $\mathfrak{c}$ and $Y$ is countable, the space $\{0,1\}^{Y'}\times \sigma_1\left(\{0,1\}^{\alpha(Y)\times \RR}\right)$ is an Eberlein compact space of cardinality continuum.
\end{proof}

It is known (see \cite[Theorem 2.15]{Tkachuk}) that if $K\in \mathcal{E}$, then $K$ is an $L\Sigma(\leq\omega)$-space. Hence we have the following corollary to Theorem \ref{thrm:ExampleB} and Proposition \ref{proposition:LSigma(Eberlein)} which answers Problems 4.10 and 4.11 from \cite{MLO} in the negative:

\begin{cor}
The space $KB$ is an $L\Sigma(L\Sigma(\leq\omega))$-space but not an $L\Sigma(\leq\omega)$-space. In particular, the classes of $L\Sigma(L\Sigma(\leq\omega))$-spaces and of $L\Sigma(\leq\omega)$-spaces are different in the realm of compacta.
\end{cor}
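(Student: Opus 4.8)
All of the substantive work is already contained in Theorem~\ref{thrm:ExampleB} and Proposition~\ref{proposition:LSigma(Eberlein)}, so the proof is essentially a bookkeeping argument. The first assertion of the corollary — that $KB$ is not an $L\Sigma(\leq\omega)$-space — is literally Theorem~\ref{thrm:ExampleB}, so nothing is left to do there. The plan for the remaining two assertions is as follows.

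First I would spell out the definition: a compact space $X$ is an $L\Sigma(L\Sigma(\leq\omega))$-space precisely when there is a separable metrizable space $M$ and a compact-valued upper semicontinuous onto map $p\colon M\to X$ each of whose fibres $p(z)$ is a \emph{compact $L\Sigma(\leq\omega)$-space}. By Proposition~\ref{proposition:LSigma(Eberlein)}, $KB$ is an $L\Sigma(\EE)$-space, i.e.\ such a map $p$ exists with every $p(z)$ an Eberlein compactum of cardinality at most $\mathfrak{c}$. Now I invoke \cite[Theorem~2.15]{Tkachuk}: each such Eberlein compactum is itself an $L\Sigma(\leq\omega)$-space. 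Hence every fibre $p(z)$ is a compact $L\Sigma(\leq\omega)$-space, and the very same $p$ witnesses that $KB$ is an $L\Sigma(L\Sigma(\leq\omega))$-space.

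For the ``in particular'' clause I would note the trivial reverse inclusion between the two classes of compacta: a metrizable compactum is an $L\Sigma(\leq\omega)$-space, so for any compact $L\Sigma(\leq\omega)$-space $X$ the singleton-valued identity map $X\to X$ already exhibits $X$ as an $L\Sigma(L\Sigma(\leq\omega))$-space. Thus the two classes of compacta would be equal were it not for the existence of a compact $L\Sigma(L\Sigma(\leq\omega))$-space failing to be $L\Sigma(\leq\omega)$; and $KB$ is exactly such a space by the two assertions just established.

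The only point requiring any care — and hence the closest thing to an ``obstacle'' here — is unwinding the definition of $L\Sigma(\mathcal{K})$ when $\mathcal{K}$ is itself the class of $L\Sigma(\leq\omega)$-compacta: one must apply the class inclusion $\EE\subseteq\{\text{compact }L\Sigma(\leq\omega)\text{-spaces}\}$ fibrewise, to each $p(z)$, rather than to $KB$ as a whole. Beyond that, every step is a direct citation of a previously established result.
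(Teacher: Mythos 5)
Your proposal is correct and follows essentially the same route as the paper: the corollary is exactly the combination of Theorem~\ref{thrm:ExampleB}, Proposition~\ref{proposition:LSigma(Eberlein)}, and the cited fact that every member of $\mathcal{E}$ is an $L\Sigma(\leq\omega)$-space, applied fibrewise to the map witnessing $L\Sigma(\mathcal{E})$. One small slip in your side remark: for a nonmetrizable compact $L\Sigma(\leq\omega)$-space $X$ the ``singleton-valued identity map $X\to X$'' is not a valid witness, since the domain of such a map must be separable metrizable; the correct (and equally trivial) observation is that the map $p\colon M\to X$ witnessing $L\Sigma(\leq\omega)$ already witnesses $L\Sigma(L\Sigma(\leq\omega))$, because its compact metrizable fibres are $L\Sigma(\leq\omega)$-spaces. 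In any case this inclusion is not needed for the ``in particular'' clause: the two classes differ simply because $KB$ belongs to one and not the other.
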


\section{Example C and Corson compacta that are $L\Sigma(\leq\omega)$-spaces}

Given a set $\Gamma$ and a point $x\in \mathbb{R}^\Gamma$, \textit{the support} of $x$ is the set
$$\supp x=\{\gamma\in \Gamma:x(\gamma)\neq 0\}.$$
The following subclass of Corson compacta was introduced by A. Leiderman \cite{L} under the name \textit{almost Gul'ko compact spaces}. Our notation follows Todorcevic \cite{Tod1}. We refer the interested reader to \cite{Tod1} for a more general notion of $\mathcal{E}_2(\theta)$-spaces.

\begin{defin}\label{def:E_2}
We say that a compact subspace $K$ of some $\Sigma$-product $\Sigma(\mathbb{R}^\Gamma)$ has the property $\mathcal{E}_2(\aleph_1)$ if there is a sequence $(\Gamma_n)_{n\in \omega}$ of subsets of $\Gamma$ such that if, for $x\in K$, we let $N_x=\{n\in \omega: |\supp x\cap \Gamma_n|<\aleph_0\}$, then the set $\Gamma\setminus\bigcup_{n\in N_x} \Gamma_n$ is countable.
\end{defin}

We will give an elementary proof the following

\begin{thrm}\label{E_2 are LSigma}
If a compact space $K$ of cardinality $\leq\mathfrak{c}$ has the property
$\mathcal{E}_2(\aleph_1)$, then $K$ is an $L\Sigma(\leq\omega)$-space.
\end{thrm}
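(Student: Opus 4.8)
The plan is to verify condition $(B)$ of Proposition~\ref{LSigma-characterization}: I will exhibit an explicit countable closed cover $\mathcal{C}$ of $K$ such that, for each $x\in K$, the intersection $\bigcap\{C\in\mathcal{C}:x\in C\}$ is metrizable. We may assume $K$ is non-metrizable (otherwise $\mathcal{C}=\{K\}$ works). Fix an embedding $K\subseteq\Sigma(\mathbb{R}^\Gamma)$ and a sequence $(\Gamma_n)_{n\in\omega}$ of subsets of $\Gamma$ witnessing $\mathcal{E}_2(\aleph_1)$; deleting coordinates used by no point of $K$ (which does not affect the $\mathcal{E}_2(\aleph_1)$ property) we may assume $\Gamma=\bigcup_{x\in K}\supp x$, and then $|\Gamma|\le|K|\cdot\aleph_0\le\mathfrak{c}$ --- this is the only place the cardinality hypothesis is used. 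As $K$ is non-metrizable, $\Gamma$ is uncountable, so I may fix an injection $c\colon\Gamma\to 2^\omega$; write $c(\gamma)=(c_m(\gamma))_{m\in\omega}$.

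For the cover, set $C_{n,k}=\{x\in K:|\supp x\cap\Gamma_n|\le k\}$ for $n,k\in\omega$, and for $n\in\omega$, $q\in 2^{<\omega}$ set $\Gamma_{n,q}=\{\gamma\in\Gamma_n:q\subseteq c(\gamma)\}$ and $D_{n,q}=\{x\in K:x(\gamma)=0\text{ for all }\gamma\in\Gamma_{n,q}\}$; let $\mathcal{C}=\{C_{n,k}:n,k\in\omega\}\cup\{D_{n,q}:n\in\omega,\ q\in 2^{<\omega}\}$. The first, routine, step is to check that $\mathcal{C}$ is a countable family of closed sets covering $K$: $C_{n,k}$ is closed because $x\mapsto|\supp x\cap\Gamma_n|$ is lower semicontinuous on a $\Sigma$-product, $D_{n,q}$ is closed as an intersection of zero-sets of coordinate projections, and $\mathcal{C}$ covers $K$ since for any $x$ the set $N_x=\{n:|\supp x\cap\Gamma_n|<\aleph_0\}$ is nonempty (as $\Gamma\setminus\bigcup_{n\in N_x}\Gamma_n$ is countable while $\Gamma$ is not), whence $x\in C_{n,|\supp x\cap\Gamma_n|}$ for any $n\in N_x$.

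The core computation is as follows. Fix $x\in K$ and put $L=\bigcap\{C\in\mathcal{C}:x\in C\}$, $F_n=\supp x\cap\Gamma_n$ (finite for $n\in N_x$), and $S_x=\Gamma\setminus\bigcup_{n\in N_x}\Gamma_n$ (countable). Fix $n\in N_x$ and let $T_n=\{q\in 2^{<\omega}:q\subseteq c(\gamma)\text{ for some }\gamma\in F_n\}$, a finite subtree of $2^{<\omega}$. For $q\notin T_n$ one has $\supp x\cap\Gamma_{n,q}=F_n\cap\Gamma_{n,q}=\emptyset$, so $x\in D_{n,q}$, hence $y\in D_{n,q}$ for all $y\in L$. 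Since $F_n$ is finite, the only infinite branches of $T_n$ are the elements $c(\gamma)$, $\gamma\in F_n$; therefore any $\gamma\in\Gamma_n\setminus F_n$ has some initial segment $q$ of $c(\gamma)$ with $q\notin T_n$, and then $\gamma\in\Gamma_{n,q}$, so $y(\gamma)=0$. Thus $\supp y\cap\Gamma_n\subseteq F_n$ for every $n\in N_x$ and every $y\in L$. Now $\supp y\subseteq\Gamma=S_x\cup\bigcup_{n\in N_x}\Gamma_n$, so $\supp y\subseteq S_x\cup\bigcup_{n\in N_x}F_n\subseteq S_x\cup\supp x$, which is a countable set independent of $y$. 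Consequently $\bigcup_{y\in L}\supp y$ is countable, the projection onto $\mathbb{R}^{S_x\cup\supp x}$ restricts to a continuous injection of the compact space $L$ into a separable metrizable space, and so $L$ is metrizable. By Proposition~\ref{LSigma-characterization}, $K$ is an $L\Sigma(\le\omega)$-space.

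The main obstacle is precisely what forces the cover to contain the sets $D_{n,q}$ rather than just the $C_{n,k}$: a compact subspace of the ``at most $k$-supported'' part of $\mathbb{R}^{\Gamma_n}$ can fail to be metrizable (it may be a copy of $A(\mathfrak{c})$, as in Example~\ref{Example B}), so knowing only $|\supp y\cap\Gamma_n|\le|F_n|$ is not enough --- one must actually recover the finite set $F_n$. The injection $c\colon\Gamma\to 2^\omega$, available because $|\Gamma|\le\mathfrak{c}$, does this: the family $\{D_{n,q}:q\in 2^{<\omega}\}$ records, inside a countable cover, which branches of $2^{<\omega}$ occur in $\supp x\cap\Gamma_n$, and the finiteness of $F_n$ (guaranteed by $n\in N_x$) makes the corresponding subtree have only its obvious branches. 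The relaxation from the Gul'ko condition ``$\Gamma=\bigcup_{n\in N_x}\Gamma_n$'' to the $\mathcal{E}_2(\aleph_1)$ condition ``$\Gamma\setminus\bigcup_{n\in N_x}\Gamma_n$ countable'' then costs nothing: the leftover coordinates all lie in the countable set $S_x$ and are simply absorbed into the countable support bound for $L$.
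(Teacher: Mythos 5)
Your proof is correct and follows essentially the same route as the paper's: both use the cardinality bound to inject the (relevant part of the) index set into a second-countable space (you use $2^\omega$ with basic clopen sets, the paper uses $[0,1]$ with rational intervals), form the countable closed cover of sets ``vanishing on the trace of $\Gamma_n$ over a basic set,'' and then separate a coordinate outside $\supp x$ from the finite set $\supp x\cap\Gamma_n$ to confine the supports of all points of $L$ to the countable set $\supp x\cup S_x$. The only blemish is the phrase ``a finite subtree'' for $T_n$ (it is an infinite tree with finitely many infinite branches), but your argument only uses the latter property, so nothing is affected.
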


Sokolov showed in \cite{S} that every Gul'ko compact space has the property $\mathcal{E}_2(\aleph_1)$. It is also known that there are compact spaces of cardinality $\mathfrak{c}$ which are not Gul'ko compact but have the property $\mathcal{E}_2(\aleph_1)$ (see \cite{L1} or \cite{Tod1}). Therefore, Theorem \ref{E_2 are LSigma} generalizes \cite[Theorem 4.4]{MLO} and provides a space for Example \ref{Example C}.
It is worth mentioning that another extension of \cite[Theorem 4.4]{MLO} (different than ours), using a similar concept as in Definition \ref{def:E_2}, was given by Rojas-Hern\'andez in \cite{RH}.

\begin{proof}[Proof of Theorem \ref{E_2 are LSigma}]
Fix a set $\Gamma$ such that $K\subseteq \Sigma(\RR^\Gamma)$ and fix a sequence
$(\Gamma_n)_{n\in\omega}$ of subsets of $\Gamma$ as in Definition \ref{def:E_2}.
Consider
$$T=\bigcup\{\supp x:x\in K\}.$$
Since $K\subseteq \Sigma(\RR^\Gamma)$ satisfies $|K|\leq \mathfrak{c}$, the set $T$ has cardinality at most $\mathfrak{c}$. The restriction mapping $x\mapsto x\upharpoonright T$ embeds $K$ into the $\Sigma$-product $\Sigma(\RR^T)$ and if we let $T_n=\Gamma_n\cap T$, then the set $T$ and the sequence $(T_n)_{n\in\omega}$ have the property from Definition \ref{def:E_2}. Denote by $I$ the unit interval $[0,1]$ and fix an arbitrary injection $h:T\to I$. Put $I_{n+1}=h(T_n)$ and $I_0=I\setminus\rng(h)$, where $\rng(h)$ is the range of $h$. It is clear that $h$ defines a homeomorphic embedding of $K$ into the $\Sigma$-product $\Sigma(\RR^I)$ and if, for $x\in K$, we let $N_x=\{n\in\omega:|\supp x\cap I_n|<\aleph_0\}$, then the set $I\setminus\bigcup_{n\in N_x} I_n$ is countable.

For rationals $p<q$ and integer $n$ define
$$C_{n,p,q}=\{x\in K:\forall\gamma\in (p,q)\cap I_n\;\;x(\gamma)=0\}.$$
It can be easily checked that the set $C_{n,p,q}$ is closed in $K$.
If $z\in K$, then $I\setminus \bigcup\{I_n:n\in N_z\}$ is countable. Hence, for some $m\in N_z$ the set $I_m$ is uncountable. Since $\supp z\cap I_m$ is finite, there are rationals $r<t$ such that $z(\gamma)=0$ for every $\gamma\in (r,t)\cap I_m$. This shows that $z\in C_{m,r,t}$ and thus the family $\{C_{n,p,q}: p,q\in \mathbb{Q},\;n\in \omega\}$ is a countable closed cover of $K$.

Fix $z\in K$ and let $C_z=\bigcap\{C_{n,p,q}:z\in C_{n,p,q}\}$. The set $$R_z=I\setminus\left(\bigcup_{n\in N_z}I_n\right)$$ is countable. We claim that
\begin{equation}\label{equation}
\mbox{if } x\in C_z\mbox{, then } \supp x\subseteq \supp z\cup R_z. \end{equation}

Indeed, otherwise $x(\gamma)\neq 0$ and $z(\gamma)=0$ for some $\gamma\in \bigcup_{n\in N_z}I_n$. Take $n\in N_z$ with $\gamma\in I_n$. Since $n\in N_z$, the set $$F=\supp z\cap I_n$$
is finite and $\gamma\notin F$. It follows that there are rationals $p<q$ such that $$\gamma\in (p,q) \mbox{ and } (p,q)\cap F=\emptyset.$$
This gives $z\in C_{n,p,q}$ and thus $x\in C_{n,p,q}$ (because $x\in C_z$). This contradicts $x(\gamma)\neq 0$.

Since the set $\supp z\cup R_z$ is countable, we infer from \eqref{equation} that the set $C_z$ is metrizable.
\end{proof}

\section{Example D: A first countable $KL\Sigma(\leq\omega)$-space need not be metrizably fibered}

Tkachuk asked in \cite{Tkachuk} whether every first-countable continuous image of a metrizably fibered compactum is metrizably fibered (see Question \ref{question1}). A related question of Tkachuk, whether every first-countable continuous image of the lexicographic square is metrizably fibered, was answered in the affirmative by Daniel and Kennaugh \cite{DK} (see \cite{D} for a slight generalization). However, as we we will show in this section, in general the answer to Question \ref{question1} is in the negative (see Corollary \ref{cor_lex_product}).

The following fact is easy do derive.

\begin{prop}\label{KLSigma_characterization_2}
Let $K$ be a compact space. The following conditions are equivalent:
\begin{enumerate}[(A)]
 \item $K\in KL\Sigma(\leq \omega)$
 \item There is a compact metric space $M$ and a closed subspace $F$ of $K\times M$ such that $\pi_1(F)=K$ and $\pi_2^{-1}(y)\cap F$ is metrizable for every $y\in M$, where $\pi_1:K\times M\to K$ and $\pi_2:K\times M\to M$ are the projections.
\end{enumerate}
\end{prop}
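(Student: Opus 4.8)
The plan is to prove this by unwinding the definition of $KL\Sigma(\leq\omega)$ in the same spirit as the proof of Proposition \ref{KLSigma_characterization}, essentially exhibiting the required closed subset of $K\times M$ as (a graph-like object associated with) the multivalued map. For the implication $(A)\Rightarrow(B)$, I would start from a compact metric space $M$ and an upper semicontinuous compact-valued onto map $p\colon M\to K$ with each $p(z)$ metrizable, and consider the graph
$$F=\{\la x,z\ra\in K\times M: x\in p(z)\}.$$
Upper semicontinuity of $p$ makes $F$ closed in $K\times M$, hence compact; since $p$ is onto, $\pi_1(F)=K$; and $\pi_2^{-1}(z)\cap F$ is (a homeomorphic copy of) $p(z)$, which is metrizable. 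This is exactly the construction already used in the proof of $(A)\Rightarrow(B)$ of Proposition \ref{KLSigma_characterization}, so there is essentially nothing new here beyond bookkeeping of which coordinate plays which role.

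For the converse $(B)\Rightarrow(A)$, suppose $M$ is a compact metric space and $F\subseteq K\times M$ is closed with $\pi_1(F)=K$ and each fiber $\pi_2^{-1}(y)\cap F$ metrizable. Define $p\colon M\to K$ by
$$p(y)=\pi_1\bigl(\pi_2^{-1}(y)\cap F\bigr)=\{x\in K:\la x,y\ra\in F\}.$$
I would check the three required properties: $p$ is onto because $\pi_1(F)=K$; $p(y)$ is compact because it is the continuous image under $\pi_1$ of the compact set $\pi_2^{-1}(y)\cap F$, and in fact $\pi_1$ restricted to that fiber is a homeomorphism onto $p(y)$ (it is a continuous bijection from a compact space to a Hausdorff space), so $p(y)$ is metrizable since the fiber is; and $p$ is upper semicontinuous — given an open $U\subseteq K$ with $p(y)\subseteq U$, the set $(K\setminus U)\times M$ meets $F$ in a compact set whose image under $\pi_2$ is a closed subset of $M$ missing $y$, so its complement is an open neighborhood of $y$ mapped by $p$ into $U$. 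Then $M$ compact metric plus $p$ upper semicontinuous compact-valued onto with metrizable values is exactly the definition of $K\in KL\Sigma(\leq\omega)$.

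The only mildly delicate point — and the one I would be most careful about — is the upper semicontinuity argument and, relatedly, making sure the fibers $\pi_2^{-1}(y)\cap F$ really are compact (which is immediate: $F$ is closed in the compact space $K\times M$, and $\pi_2^{-1}(y)$ is closed, so the intersection is compact) and that the identification of $p(y)$ with that fiber via $\pi_1$ is a homeomorphism. Everything else is routine. Since the proposition is explicitly flagged as ``easy to derive,'' I would keep the write-up short, quoting the graph construction from the proof of Proposition \ref{KLSigma_characterization} for one direction and only spelling out the upper-semicontinuity verification for the other.
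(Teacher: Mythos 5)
Your proposal is correct and follows essentially the same route as the paper: the $(A)\Rightarrow(B)$ direction is the identical graph construction $F=\{\la x,y\ra\in K\times M:x\in p(y)\}$, closed by upper semicontinuity, with fibers $\pi_2^{-1}(y)\cap F\cong p(y)$. For $(B)\Rightarrow(A)$ the paper simply cites \cite[Proposition 2.3]{KOS}, and your direct verification that $y\mapsto\pi_1\bigl(\pi_2^{-1}(y)\cap F\bigr)$ is a compact-valued, upper semicontinuous, onto map with metrizable values is a correct self-contained substitute for that citation.
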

\begin{proof}
 The implication $(B)\Rightarrow (A)$ follows from \cite[Proposition 2.3]{KOS}. To show the converse,
 suppose that $K\in KL\Sigma(\leq\omega)$. By definition, there is a compact metric space $M$ and a compact-valued upper semicontinuous map $p:M\to K$ such that $K=\bigcup\{p(y):y\in M\}$ and $p(y)$ is metrizable for all $y\in M$. Let
 $$F=\{\la x,y \ra \in K\times M:x\in p(y)\}.$$
 The set $F$ is closed being the graph of the upper semicontinuous function $p$. It is easy to verify that $F$ is as required.
\end{proof}

For an ordinal number $\lambda$, by $[0,1]^\lambda_{lex}$ we denote the space $[0,1]^\lambda$ endowed with the order topology given by the lexicographic product order. We write $[0,1]^\lambda$ when the usual product topology on $[0,1]^\lambda$ is considered. Let us show the following:

\begin{thrm}\label{ExampleD:main}
 For every countable ordinal $\lambda<\omega_1$, the compact space $[0,1]^\lambda_{lex}$ is a $KL\Sigma(\leq\omega)$-space.
\end{thrm}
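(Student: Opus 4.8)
The plan is to prove the statement by induction on the countable ordinal $\lambda$, using the characterization in Proposition~\ref{KLSigma_characterization}, namely the existence of a binary tree $\{C_s:s\in 2^{<\omega}\}$ of closed subsets with $C_\emptyset=[0,1]^\lambda_{lex}$, $C_s=C_{s\frown 0}\cup C_{s\frown 1}$, and $\bigcap_n C_{\sigma|n}$ metrizable for every branch $\sigma\in 2^\omega$. The base case $\lambda=0$ (or $\lambda=1$) is trivial since $[0,1]^1_{lex}=[0,1]$ is metrizable. For the successor step $\lambda=\mu+1$, I would observe that $[0,1]^{\mu+1}_{lex}$ is built from $[0,1]^\mu_{lex}$ by replacing each point by a copy of $[0,1]$; more precisely there is a natural continuous order-preserving surjection $[0,1]^{\mu+1}_{lex}\to[0,1]^\mu_{lex}$ (project off the last coordinate) whose fibres are copies of $[0,1]$, i.e.\ $[0,1]^{\mu+1}_{lex}$ is metrizably fibered over $[0,1]^\mu_{lex}$. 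Since $KL\Sigma(\leq\omega)$ is exactly the class of continuous images of compact metrizably fibered spaces (Proposition~\ref{KLSigma_characterization}(B)) and is closed under the relevant operations, a metrizably fibered space over a $KL\Sigma(\leq\omega)$-space should again be $KL\Sigma(\leq\omega)$; concretely, pull the tree $\{C_s\}$ for $[0,1]^\mu_{lex}$ back along the projection and note each pulled-back branch-intersection is metrizably fibered over a metrizable space, hence (being compact and metrizably fibered) itself admits such a tree, which can be grafted on.

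The genuinely hard case is the limit step: write $\lambda=\sup_k \mu_k$ with $\mu_k\nearrow\lambda$. Here I would use the fact that a point $x\in[0,1]^\lambda_{lex}$ is determined by the coherent sequence of its truncations $x|\mu_k\in[0,1]^{\mu_k}_{lex}$, and that the order topology on $[0,1]^\lambda_{lex}$ interacts with these truncations in a controlled way: two points are ``close'' in the order topology essentially when they agree on a long initial segment. The idea is to build the binary tree for $[0,1]^\lambda_{lex}$ by interleaving the binary trees $\{C^{(k)}_s:s\in 2^{<\omega}\}$ obtained from the inductive hypothesis for each $[0,1]^{\mu_k}_{lex}$, pulled back via the truncation maps $q_k:[0,1]^\lambda_{lex}\to[0,1]^{\mu_k}_{lex}$. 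That is, one designs a single binary-branching scheme that, along any branch $\sigma$, eventually refines according to $\{C^{(1)}_s\}$, then $\{C^{(2)}_s\}$, and so on, so that $\bigcap_n C_{\sigma|n}$ gets pinned down to lie inside a single branch-intersection $\bigcap_n C^{(k)}_{\tau|n}$ for each $k$ simultaneously. One must check that the resulting set, being contained (via all the $q_k$) in a metrizable set for each $k$ and diagonally embeddable into $\prod_k (\text{metrizable})$, is itself metrizable --- this works provided the maps $q_k$ jointly separate points of the intersection, which they do since the $\mu_k$ are cofinal in $\lambda$.

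I expect the main obstacle to be precisely this limit step, and within it two technical points: first, verifying that the lexicographic order topology on $[0,1]^\lambda_{lex}$ is correctly captured by the truncation maps --- one needs that $\bigcap_k q_k^{-1}(\text{small order-interval in }[0,1]^{\mu_k}_{lex})$ forms a neighbourhood base, or at least that closedness and the metrizability conclusion survive; and second, the bookkeeping to interleave countably many binary trees into one while keeping the union condition $C_s=C_{s\frown 0}\cup C_{s\frown 1}$ exactly satisfied at every node (rather than merely eventually). A clean way to handle the bookkeeping is to fix a surjection $\omega\to\omega$ hitting each $k$ infinitely often and, at stage $n$ assigned to index $k$, split each current piece according to one more level of the $k$-th tree (pulled back along $q_k$), leaving the other coordinates untouched; the union condition is automatic since at each node we split a set into two closed pieces covering it. The metrizability of $\bigcap_n C_{\sigma|n}$ then follows because this set injects into $\prod_k \big(\bigcap_n C^{(k)}_{\sigma^{(k)}|n}\big)$, a countable product of metrizable compacta, and the injection is continuous from a compact space, hence an embedding.

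Alternatively --- and this may be cleaner than the tree formalism --- one could phrase the limit step directly via Proposition~\ref{KLSigma_characterization_2} or via upper semicontinuous maps: construct a compact metrizable space $M$ (a subset of a countable product of Cantor sets coming from the inductive data at each $\mu_k$) together with a compact-valued u.s.c.\ surjection onto $[0,1]^\lambda_{lex}$ with metrizable values, by taking a suitable ``inverse-limit-like'' composition of the maps furnished by the inductive hypothesis. In either formulation the crux is the same: the lexicographic order at a limit ordinal is controlled by arbitrarily long initial segments, and countably many of them suffice to pin a point down up to a metrizable ambiguity.
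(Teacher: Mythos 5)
Your successor step is where the genuine gap lies. For $\lambda=\mu+1$ you reduce everything to the claim that a compactum admitting a continuous surjection onto a $KL\Sigma(\leq\omega)$ compactum with metrizable (here $[0,1]$) fibres is again $KL\Sigma(\leq\omega)$, but nothing you write establishes this. Pulling the tree $\{C^{(\mu)}_s\}$ for $[0,1]^\mu_{lex}$ back along the truncation $q$ gives branch-intersections of the form $q^{-1}(B_\sigma)$ with $B_\sigma$ compact metrizable; these are only metrizably fibered, not metrizable (already for $\lambda=3$ they can contain split-interval/double-arrow type subspaces), as you acknowledge. Your fix --- each such set ``admits such a tree, which can be grafted on'' --- does not parse: a branch-intersection is reached only after infinitely many levels, so there is no node at which a fibrewise tree could be attached, and there are continuum many branches $\sigma$ but only countably many nodes in a single binary tree, so the continuum many fibre trees cannot be absorbed by any interleaving. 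What the pullback argument really gives is membership in $KL\Sigma(\mathcal{K})$ with $\mathcal{K}$ the class of metrizably fibered compacta, i.e.\ in an iterated class $KL\Sigma(KL\Sigma(\leq\omega))$; that such iterations collapse is exactly the kind of statement one may not assume --- the paper's Example B shows the analogous identity $L\Sigma(L\Sigma(\leq\omega))=L\Sigma(\leq\omega)$ fails among compacta. So the successor transfer principle is precisely the hard content, and it is left unproved. (Your limit step, by contrast, is essentially sound: the truncations $q_k$ are continuous, they jointly separate points since the $\mu_k$ are cofinal in $\lambda$, and the interleaving with $C_{s\frown i}=C_s\cap q_k^{-1}\bigl(C^{(k)}_{\tau_k(s)\frown i}\bigr)$ does satisfy the exact union condition and yields metrizable branch-intersections via the diagonal embedding into $\prod_k(\text{metrizable})$.)

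The paper avoids induction altogether and in particular never needs the successor principle. It applies Proposition~\ref{KLSigma_characterization_2} with $M=[0,1]^\lambda$ in the \emph{product} topology (compact metrizable because $\lambda$ is countable) and takes $F\subseteq [0,1]^\lambda_{lex}\times[0,1]^\lambda$ to consist of the pairs $\langle (x_\alpha),(y_\alpha)\rangle$ such that for some $\beta$ one has $x_\alpha=y_\alpha$ for all $\alpha<\beta$ while $(x_\alpha)_{\alpha\geq\beta}$ is constantly $0$ or constantly $1$. The point of the ``eventually constant tail'' is that, although coordinates beyond an initial segment are badly discontinuous on $[0,1]^\lambda_{lex}$ (so a graph-of-the-identity set would not be closed), this relaxed relation is closed, it still projects onto all of $[0,1]^\lambda_{lex}$, and each fibre over $[0,1]^\lambda$ is countable (at most two points for each of the countably many $\beta$), hence metrizable. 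To salvage your scheme you would either have to prove the successor transfer principle or replace the successor step by a direct construction of this kind.
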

\begin{proof}
 Consider the following subset $F$ of $[0,1]^\lambda_{lex}\times [0,1]^\lambda$:
 $$F=\{\la (x_\alpha), (y_\alpha)  \ra: (\exists \beta)\; \left(\forall\alpha<\beta\;\;x_\alpha=y_\alpha\right) \mbox{ and } \left(\forall\alpha\geq \beta\; x_\alpha=0 \mbox{ or } \forall\alpha\geq \beta\; x_\alpha=1 \right)\}.$$

 \begin{claim}
 The set $F$ is closed in $[0,1]^\lambda_{lex}\times [0,1]^\lambda$.
 \end{claim}
\begin{proof}
Pick $\la(x_\alpha), (y_\alpha) \ra\in
[0,1]^\lambda_{lex}\times [0,1]^\lambda$ so that $\la(x_\alpha), (y_\alpha) \ra\notin F$. Let $$\beta=\min\{\alpha<\lambda: x_\alpha\neq y_\alpha\}.$$
Let $A$ and $B$ be two disjoint open subsets of $[0,1]$ satisfying $x_\beta\in A$ and $y_\beta\in B$. Denote by $p_\beta:[0,1]^\lambda\to [0,1]$ the projection onto the coordinate $\beta$. Consider the following three cases:

\medskip

\textit{Case 1:} $x_\beta\notin\{0,1\}$. Consider the following set:
$$W=\{(z_\alpha)\in [0,1]^\lambda_{lex}:z_\alpha=x_\alpha\mbox{ for }\alpha<\beta\mbox{ and }z_\beta\in A\cap (0,1)\}.$$
It is easy to see that $W$ is open in $[0,1]^\lambda_{lex}$ and $(x_\alpha)\in W$. Hence, the set $W\times p_\beta^{-1}(B)$ is an open neighborhood of $\la(x_\alpha), (y_\alpha) \ra$ in $[0,1]^\lambda_{lex}\times [0,1]^\lambda$ disjoint from $F$.

\medskip

\textit{Case 2:} $x_\beta=0$. Since $\la(x_\alpha), (y_\alpha) \ra\notin F$, there is $\alpha>\beta$ with $x_\alpha\neq 0$. Let
$$\gamma=\min\{\alpha>\beta:x_\alpha\neq 0\}.$$
Fix $\varepsilon>0$ such that $[0,\varepsilon)\subseteq A$. Define

\medskip

\begin{minipage}{.4\linewidth}
\begin{equation*}
a_\alpha=
  \left\{\begin{aligned}
  &   x_\alpha &\mbox{if }\alpha<\gamma\\
&0  &\mbox{if }\alpha=\gamma\\
&1  &\mbox{if }\alpha>\gamma
\end{aligned}
 \right.
\end{equation*}
\end{minipage}
\begin{minipage}{.1\linewidth}
 and
\end{minipage}
\begin{minipage}{.3\linewidth}
\begin{equation*}
b_\alpha=
  \left\{\begin{aligned}
  &   x_\alpha &\mbox{if }\alpha<\beta\\
&\varepsilon  &\mbox{if }\alpha=\beta\\
&0  &\mbox{if }\alpha>\beta
\end{aligned}
\right.
\end{equation*}
\end{minipage}

\medskip

Let $W$ be the open subset of $[0,1]^\lambda_{lex}$ consisting of all elements of $[0,1]^\lambda_{lex}$ that lie between $(a_\alpha)_{\alpha<\lambda}$ and $(b_\alpha)_{\alpha<\lambda}$ in the lexicographic order. Clearly, $(x_\alpha)\in W$ and one easily verifies that the set $W\times p^{-1}_\beta(B)$ is an open neighborhood of $\la(x_\alpha), (y_\alpha) \ra$ in $[0,1]^\lambda_{lex}\times [0,1]^\lambda$ disjoint from $F$.

\medskip

\textit{Case 3:} $x_\beta=1$. This case is analogous to the previous one. There is $\alpha>\beta$ with $x_\alpha\neq 1$. Let
$$\gamma=\min\{\alpha>\beta:x_\alpha\neq 1\}.$$
Fix $\varepsilon>0$ such that $(\varepsilon,1]\subseteq A$. Define

\medskip

\begin{minipage}{.4\linewidth}
\begin{equation*}
a_\alpha=
  \left\{\begin{aligned}
  &   x_\alpha &\mbox{if }\alpha<\beta\\
&\varepsilon  &\mbox{if }\alpha=\beta\\
&1  &\mbox{if }\alpha>\beta
\end{aligned}
 \right.
\end{equation*}
\end{minipage}
\begin{minipage}{.1\linewidth}
 and
\end{minipage}
\begin{minipage}{.3\linewidth}
\begin{equation*}
b_\alpha=
  \left\{\begin{aligned}
  &   x_\alpha &\mbox{if }\alpha<\gamma\\
&1  &\mbox{if }\alpha=\gamma\\
&0  &\mbox{if }\alpha>\gamma
\end{aligned}
\right.
\end{equation*}
\end{minipage}

\medskip

As in Case 2, let $W$ consists of all elements in $[0,1]^\lambda_{lex}$ that lie between $(a_\alpha)_{\alpha<\lambda}$ and $(b_\alpha)_{\alpha<\lambda}$. Then, the set $W\times p_\beta^{-1}(B)$ is an open neighborhood of $\la(x_\alpha), (y_\alpha) \ra$ in $[0,1]^\lambda_{lex}\times [0,1]^\lambda$ disjoint from $F$.

\medskip

This finishes the proof of the claim.
\end{proof}
Let $$\pi_1:[0,1]^\lambda_{lex}\times [0,1]^\lambda\to [0,1]^\lambda_{lex} \quad\mbox{and}\quad \pi_2:[0,1]^\lambda_{lex}\times [0,1]^\lambda\to [0,1]^\lambda$$
be projections.
Observe that $\pi_1(F)=[0,1]^\lambda_{lex}$ and if $y\in [0,1]^\lambda$, then the set $\pi^{-1}_2(y)\cap F$ is countable and compact, thus metrizable. It follows from Proposition \ref{KLSigma_characterization_2} that $[0,1]^\lambda_{lex}\in KL\Sigma(\leq \omega)$.
\end{proof}

From Theorem \ref{ExampleD:main}, Proposition \ref{KLSigma_characterization} and \cite[Example 2.4]{Tkachuk} we get the following.

\begin{cor}\label{cor_lex_product}
 The lexicographic product of three intervals $[0,1]^3_{lex}$ is a first-countable continuous image of a metrizably fibered space, yet it is not metrizably fibered.
\end{cor}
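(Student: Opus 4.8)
The plan is to derive Corollary \ref{cor_lex_product} as a straightforward assembly of three ingredients already in hand. Apply Theorem \ref{ExampleD:main} with $\lambda = 3$ to conclude that $[0,1]^3_{lex}$ is a $KL\Sigma(\leq\omega)$-space. By Proposition \ref{KLSigma_characterization}, condition $(A)$ is equivalent to condition $(B)$, so $[0,1]^3_{lex}$ is a continuous image of a compact metrizably fibered space. Thus the only remaining points are that $[0,1]^3_{lex}$ is first-countable and that it is not metrizably fibered.

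For first-countability, I would note that $[0,1]^3_{lex}$ is a linearly ordered compact space whose order is separable-like enough that every point has a countable neighbourhood base; concretely, a point $(x_0,x_1,x_2)$ with all coordinates interior has the open intervals determined by rational perturbations of $x_2$ as a base, and the boundary cases ($x_2 \in \{0,1\}$, or deeper coincidences) reduce to the same analysis one coordinate up. Since $\lambda = 3$ is finite, only finitely many such reductions occur, so the argument terminates with a countable local base at every point. (Alternatively one may simply cite that $[0,1]^n_{lex}$ is first-countable for finite $n$, which is classical; in the text this is the kind of fact one invokes without comment.) For the failure of metrizable fiberedness, I would appeal directly to \cite[Example 2.4]{Tkachuk}, where it is shown that the lexicographic product of three intervals is not metrizably fibered — this is exactly the cited reference in the statement, so nothing further is needed.

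The proof therefore reads: \emph{By Theorem \ref{ExampleD:main}, $[0,1]^3_{lex}$ is a $KL\Sigma(\leq\omega)$-space, hence by Proposition \ref{KLSigma_characterization} it is a continuous image of a compact metrizably fibered space. It is first-countable, being a finite lexicographic power of $[0,1]$. Finally, by \cite[Example 2.4]{Tkachuk}, it is not metrizably fibered.} I do not anticipate any genuine obstacle here; the corollary is purely a packaging of earlier results, and the one substantive claim (non-metrizable-fiberedness) is imported wholesale from the literature. If anything, the mild care point is making sure the first-countability assertion is either visibly routine or properly cited, since every metrizably fibered compactum is first-countable and the whole interest of the example lies in exhibiting a first-countable space outside that class.
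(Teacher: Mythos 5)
Your proposal is correct and follows exactly the route the paper takes: Theorem \ref{ExampleD:main} (with $\lambda=3$) combined with the equivalence $(A)\Leftrightarrow(B)$ of Proposition \ref{KLSigma_characterization}, plus the citation of \cite[Example 2.4]{Tkachuk} for the failure of metrizable fiberedness, with first-countability being the routine classical fact. Nothing to add.
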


\subsection{Example D} Let us describe now the space $KD$ from Example D announced in the Introduction (see Propositions \ref{KD:proposition1} and \ref{KD:proposition2} below). Consider the following lexicographic product:
$$X=[0,1]\times_{lex} [0,1] \times_{lex} \{0,1\}.$$
Let $KD$ be the quotient space obtained from $X$ by identifying each pair of points $(t,0,0)$ and $(t,1,1)$, where $t\in [0,1]$. Let $q:X\to KD$ be the quotient map. It is clear that $KD$ is a compact first-countable space.

The following fact which asserts that $KD$ is not metrizably fibered can be proved essentially in the same way as \cite[Example 2.4]{Tkachuk}. We enclose the argument for the convenience of the reader.

\begin{prop}\label{KD:proposition1}
The space $KD$ is not metrizably fibered
\end{prop}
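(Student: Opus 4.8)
The goal is to show that $KD$ fails to admit a continuous map onto a metrizable space with all fibers metrizable. The model space to imitate is \cite[Example 2.4]{Tkachuk}, where the lexicographic cube is shown not to be metrizably fibered. The plan is to suppose, toward a contradiction, that $f\colon KD\to M$ is a continuous map onto a metrizable space $M$ with every fiber $f^{-1}(z)$ metrizable, and to extract from this an uncountable family of pairwise disjoint nonempty open subsets of $KD$ witnessing a failure of metrizability of one fiber (or, alternatively, to derive a contradiction with the Alexandroff-double-type structure sitting inside $KD$). The key structural feature of $KD$ is that, for each $t\in[0,1]$, the image $q(\{t\}\times\{0,1\}\times\{0,1\}) $ inside $KD$ collapses the two ``outer'' points $(t,0,0)$ and $(t,1,1)$ but leaves the ``slice'' $q(\{t\}\times[0,1]\times\{0,1\})$ as a copy of a long connected space glued into a circle; more importantly each $t$ contributes a pair of points that are ``almost isolated'' from one side, much as in the Alexandroff double circle.

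\textbf{Key steps.} First I would fix the composition $g = f\circ q\colon X\to M$, a continuous map on $X=[0,1]\times_{lex}[0,1]\times_{lex}\{0,1\}$, which is constant on each identified pair $\{(t,0,0),(t,1,1)\}$. Second, for each $t\in[0,1]$ consider the point $x_t = q(t,0,0)=q(t,1,1)\in KD$ and its image $f(x_t)\in M$. I would analyze the local structure of $KD$ at $x_t$: approaching $(t,0,0)$ from below in $X$ one runs through points $q(s,1,1)$ with $s\nearrow t$, while approaching $(t,1,1)$ from above one runs through $q(s,0,0)$ with $s\searrow t$; together with the ``vertical'' approaches within the slice over $t$ this makes $x_t$ a point whose character and whose behaviour under $f$ I can control. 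Third — and this is the heart — I would show that for uncountably many $t$ the fiber $f^{-1}(f(x_t))$ must contain an uncountable discrete-in-itself (or otherwise non-metrizable) subset, because $f$ cannot separate the continuum-many ``directions'' at the collapsed points by a metrizable target: a standard pressing-down / Baire-category argument on $[0,1]$ produces a single value $m\in M$ and an uncountable $T\subseteq[0,1]$ with $f(x_t)=m$ for all $t\in T$, and then the points $x_t$, $t\in T$, form an uncountable subset of $f^{-1}(m)$ which one checks is relatively discrete (each $x_t$ is separated from the others by an open set of $KD$ coming from a small lexicographic interval around $(t,\cdot,\cdot)$), contradicting metrizability of $f^{-1}(m)$.

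\textbf{Main obstacle.} The delicate point is the third step: producing the uncountable set $T$ on which $f$ is constant and simultaneously ensuring the corresponding points $x_t$ are relatively discrete in their common fiber. Constancy on an uncountable set follows from $M$ being metrizable (hence of countable spread when restricted to a separable piece, or by an elementary argument using that $[0,1]$ is not a countable union of sets mapped injectively into a second-countable space), but one must be careful that ``uncountably many $t$ with the same image'' genuinely lands inside one fiber and that the separating open sets in $KD$ are honest neighborhoods — here the order topology on the lexicographic product and the identification both need to be handled explicitly, exactly as in \cite[Example 2.4]{Tkachuk}. I expect the write-up to consist largely of transcribing that argument with the extra bookkeeping caused by the gluing $q$, verifying along the way that the quotient does not accidentally make the relevant points non-separated.
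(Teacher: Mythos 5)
Your overall strategy (argue by contradiction against a continuous $f\colon KD\to M$ with $M$ metrizable and all fibers metrizable, imitating \cite[Example 2.4]{Tkachuk}) is the right one, but the concrete heart of your argument — step 3 — does not work, for two reasons. First, the claimed ``pressing-down / Baire-category'' step producing an uncountable $T\subseteq[0,1]$ with $f(x_t)=m$ constant is unjustified and in fact false: nothing about metrizability of $M$ forces $t\mapsto f(x_t)$ to be constant on an uncountable set. Indeed the natural first-coordinate map $KD\to[0,1]$, $q(x,y,i)\mapsto x$, is continuous (the first-coordinate projection of the lexicographic product is continuous and respects the identification) and it is \emph{injective} on $\{x_t:t\in[0,1]\}$; your parenthetical justification (``$[0,1]$ is not a countable union of sets mapped injectively into a second-countable space'') is not a true statement, since $[0,1]$ itself embeds in a second-countable space. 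Second, even if such a $T$ existed, the set $\{x_t:t\in T\}$ is never relatively discrete: as you yourself note in step 2, $q(s,1,1)\to q(t,0,0)=x_t$ as $s\nearrow t$ and $q(s,0,0)\to x_t$ as $s\searrow t$, i.e.\ $x_s\to x_t$ as $s\to t$. Every neighborhood of $x_t$ in $KD$ pulls back to a saturated open set containing order intervals around both $(t,0,0)$ and $(t,1,1)$, and such intervals necessarily contain whole neighboring slices; a ``small lexicographic interval around $(t,\cdot,\cdot)$'' cannot separate $x_t$ from the nearby $x_s$. So any condensation point $p$ of $T$ gives a non-isolated $x_p$, and no uncountable discrete set appears. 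The collapsed points are exactly the points designed to be limits of the neighboring slices (this is what makes $KD$ first-countable), so they cannot carry the nonmetrizability.

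The obstruction lives instead in the whole slices, and this is how the paper argues: each $q(X_t)$ with $X_t=\{t\}\times[0,1]\times\{0,1\}$ contains a copy of the split interval, hence is nonmetrizable, so it suffices to show that $A=\{t:|f(q(X_t))|>1\}$ is countable (then any $t\notin A$ puts a nonmetrizable slice inside a single fiber). If $A$ were uncountable, pick $a_t,b_t\in X_t$ with $d(f(q(a_t)),f(q(b_t)))\geq\varepsilon$ for uncountably many $t$ (pigeonhole over $\varepsilon=1/k$), choose a monotone sequence $t_n\to p$ inside this set, and observe that $a_{t_n}$ and $b_{t_n}$ both converge in $X$ to the \emph{same} point, $(p,0,0)$ if the sequence increases and $(p,1,1)$ if it decreases; continuity of $f\circ q$ then contradicts the uniform $\varepsilon$-separation. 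Note that the convergence-to-a-common-limit phenomenon you describe in step 2 is precisely the engine of this argument — but applied to pairs of points chosen inside slices, not to the collapsed points themselves.
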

\begin{proof} For $t\in [0,1]$ consider the following subset $X_t$ of $X$
$$X_t=\{t\}\times[0,1]\times\{0,1\}.$$
Let $M$ be a compact metric space with a metric $d$. Fix a continuous map $f:KD\to M$. Since each $q(X_t)$ is nonmetrizable (because $q(X_t)$ contains a copy of the split interval), it is enough to show that the set
$A=\{t\in [0,1]:|f(q(X_t))|>1\}$
is countable. Striving for a contradiction, suppose that $A$ is uncountable. For each $t\in A$, fix $a_t,b_t\in X_t$ with $f(q(a_t))\neq f(q(b_t))$. As $A$ is uncountable, there is $\varepsilon>0$ and an uncountable $B\subseteq A$ such that $d(q(f(a_t)),f(q(b_t)))\geq \varepsilon$. The set $B$ being an uncountable subset of $[0,1]$ contains a nontrivial sequence $S=\{t_n:n\in \omega\}\subseteq B$ convergent to a point $p\in B$.
Without loss of generality we may assume that $S$ is monotone (as $S$ contains a monotone subsequence). But then both $(a_{t_n})_{n\in \omega}$ and $(b_{t_n})_{n\in \omega}$ converge in $X$ to the same point: either to $(p,0,0)$ if $S$ is increasing, or to $(p,1,1)$ if $S$ is decreasing. This is a contradiction because $d(q(f(a_{t_n})),f(q(b_{t_n})))\geq \varepsilon$, for all $n\in \omega$.
\end{proof}

\begin{prop}\label{KD:proposition2}
The space $KD$ is a continuous image of a $3$-fibered compactum.
\end{prop}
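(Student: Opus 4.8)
The plan is to realize $KD$ as a continuous image of the lexicographic product $Z=[0,1]\times_{lex}[0,1]\times_{lex}\{0,1\}\times_{lex}\{0,1\}$, or of a closely related compact ordered space, and to exhibit a continuous surjection onto $KD$ whose fibers have at most three points. The point of inserting an extra copy of $\{0,1\}$ at the end is to "unglue" the identification that produced $KD$: in $X=[0,1]\times_{lex}[0,1]\times_{lex}\{0,1\}$ the problematic collapse identifies $(t,0,0)$ with $(t,1,1)$, and in $Z$ we have room to split the relevant points so that the natural map $Z\to X\to KD$ becomes finite-to-one. First I would write down the candidate compact space $L$ precisely; a clean choice is to let $L$ be the subspace of $Z$ obtained by keeping, over each $t\in[0,1]$, only the fibers needed to cover $q(X_t)$ with triple points at the two "seam" ordinates, and I would check that $L$ is closed in $Z$ (a finite-character condition) hence compact, and that $L$ is $3$-fibered via the projection onto the first (or first two) lexicographic coordinates, whose fibers are each a copy of the split interval with finitely many doubled points — and more to the point, via a suitable metrizable quotient map one gets fibers of size $\le 3$.

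The key steps, in order: (1) define $L\subseteq Z$ and verify compactness; (2) define $g:L\to KD$ as the composition of the coordinate-forgetting map $L\to X$ with the quotient map $q:X\to KD$, and verify $g$ is well-defined, continuous (order-preserving up to the quotient, so continuity is routine from the order topologies), and surjective onto $KD$ since $q$ is surjective and every point of $X$ is hit; (3) compute the fibers $g^{-1}(y)$ for $y\in KD$ and check $|g^{-1}(y)|\le 3$ — generic points have a single preimage, points of the form $q(t,s,i)$ with $s\in\{0,1\}$ on a seam pick up one extra preimage from the doubling in the third coordinate, and the glued points $q(t,0,0)=q(t,1,1)$ are where the count can reach three, since such a point is the image of the top of the block over $(t,0)$, the bottom of the block over $(t,1)$, and possibly one doubled seam point; (4) exhibit a metrizable space $N$ and continuous $\phi:L\to N$ with $|\phi^{-1}(z)|\le 3$, e.g. $N=[0,1]$ with $\phi$ the first coordinate is not finite-to-one, so instead take $N$ to be an appropriate metrizable quotient of $[0,1]^2_{lex}$ — by Corollary \ref{cor_lex_product} combined with Proposition \ref{KLSigma_characterization}, $[0,1]^2_{lex}$ is metrizably fibered, and one refines this to a genuine $3$-to-$1$ map; alternatively, and more simply, observe directly that $L$ itself is $3$-fibered by mapping to the metrizable space obtained from $L$ by collapsing each split-interval fiber to a point.

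The main obstacle I expect is step (3)–(4): getting the fiber bound to be exactly $3$ and not larger, while simultaneously keeping the target metrizable and the map onto $KD$ continuous. The delicate part is bookkeeping at the two seams $s=0$ and $s=1$ together with the gluing $(t,0,0)\sim(t,1,1)$: a careless construction doubles points at both seams and then the glued point accumulates preimages from both, giving $4$ rather than $3$. The fix is to be asymmetric — double the seam only on one side of each block (say attach the extra $\{0,1\}$ coordinate only where the third coordinate is $1$, or only at $s=1$), so that after the identification each point of $KD$ has at most $1+1+1=3$ preimages. I would verify this count case-by-case over the three types of points of $KD$ (interior points of a block, non-glued seam points, glued seam points) and confirm that the coordinate projection from $L$ to the relevant metrizable quotient has all fibers of cardinality $\le 3$, which yields that $L$ is a compact $3$-fibered space surjecting continuously onto $KD$, as required.
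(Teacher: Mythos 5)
Your plan never actually produces the object on which everything hinges: the continuous map $\phi:L\to N$ onto a metrizable space with fibers of size at most $3$. The surjection $g:L\to KD$ and the fiber count in your step (3) are beside the point (the fibers of $g$ need not be small at all -- in the paper's construction they are not even finite in any controlled sense -- what must be $\le 3$-to-one is the map to a \emph{metrizable} space), and both of your suggestions for step (4) fail. Collapsing each split-interval block of $L$ to a point gives a quotient map whose fibers are (doubled) split intervals of cardinality $\mathfrak{c}$, so at best it could witness metrizable fibering, never $3$-fibering. And the idea of ``refining'' the metrizably fibered structure of $[0,1]^2_{lex}$ to a genuine $3$-to-$1$ map is hopeless: the lexicographic square is not $n$-fibered for any finite $n$, by the argument sketched next.

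In fact the whole route through a subspace $L$ of $Z=[0,1]\times_{lex}[0,1]\times_{lex}\{0,1\}\times_{lex}\{0,1\}$ is blocked, not merely incomplete. If $g$ is coordinate-forgetting followed by $q$, then every point of $q(X_t)$ is hit only from the block over $t$, so $L_t=L\cap(\{t\}\times[0,1]\times\{0,1\}\times\{0,1\})$ maps onto the nonmetrizable set $q(X_t)$ and is therefore uncountable for every $t\in[0,1]$. On the other hand, in the lexicographic order any sequence of points of $L$ whose first coordinates satisfy $t_k\downarrow t$ (resp.\ $t_k\uparrow t$) converges to the single point $\max L_t$ (resp.\ $\min L_t$): this is the characteristic ``collapsing of neighboring blocks'' of lexicographic products. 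Now suppose $\phi:L\to N$ were continuous into a metric space with all fibers of cardinality $\le n$. Each $\phi(L_t)$ would be uncountable, hence of positive diameter; but the convergence just described forces $\diam \phi(L_{t'})\to 0$ as $t'\to t$, $t'\neq t$, for every $t$, so the set of $t$ with $\diam\phi(L_t)\ge 1/m$ is finite for each $m$ and only countably many blocks can have image of positive diameter --- a contradiction. (The same computation shows that $[0,1]^2_{lex}$ itself is not $n$-fibered for any $n$.) This is exactly why the paper does not look for a preimage inside a lexicographic product but instead takes the closed set $F\subseteq KD\times[0,1]^2$ consisting of the pairs $\la q(x,y,i),(x,y)\ra$ together with the pairs $\la q(x,0,0),(x,y)\ra$: there the metrizable second coordinate separates the points within each block, $\pi_1(F)=KD$, and each fiber of $\pi_2$ on $F$ consists of exactly the three points $q(x,y,0)$, $q(x,y,1)$, $q(x,0,0)$, which together with Proposition \ref{KLSigma_characterization_2} gives the statement. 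To salvage your approach you would have to abandon the lexicographic ambient space and pass to such a graph-type construction.
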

\begin{proof}
 Consider the following subspace $F$ of the Cartesian product $KD\times [0,1]^2$
 \begin{align*}
  F=&\{\la q(x,y,i), (x,y)\ra: (x,y)\in [0,1]^2,\;i\in\{0,1\}\}\\
  &\cup\{\la q(x,0,0), (x,y)\ra: (x,y)\in [0,1]^2 \}
 \end{align*}
\begin{claim}
 The set $F$ is closed in $KD\times [0,1]^2$.
\end{claim}
\begin{proof}
Pick $\la z, (x,y)\ra\in (KD\times [0,1]^2)\setminus F$. Take $a,b\in [0,1]$ and $i\in \{0,1\}$ such that $z=q(a,b,i)$. Since $\la z, (x,y)\ra\notin F$, one of the following three cases holds:

\medskip

\textit{Case 1:} $a\neq x$. Let $A$ and $B$ be two disjoint open subsets of $[0,1]$ satisfying $a\in A$ and $x\in B$. We put
$$U=q(A\times [0,1]\times \{0,1\}) \quad \mbox{and}\quad V=B\times [0,1].$$
The set $U$ is open in $KD$, as $q^{-1}(U)=A\times [0,1]\times \{0,1\}$ and the latter set is open in $X$. Therefore, the set $U\times V$ is an open neighborhood of $\la z,(x,y) \ra$ disjoint from $F$.

\medskip

\textit{Case 2:} $a=x$ and $0<b<1$. Since $\la z,(x,y) \ra\notin F$ and $z=q(a,b,i)=q(x,b,i)$, we have $b\neq y$. Let $A$ and $B$ be two disjoint open sets in $[0,1]$ satisfying $b\in A$ and $y\in B$. Since $0<b<1$, there are $\alpha<\beta\in [0,1]$ with $b\in (\alpha,\beta)\subseteq A$. Define $W$ to be the set of all points in $[0,1]\times[0,1]\times\{0,1\}$ that lie strictly between $(x,\alpha,0)$ and $(x,\beta,1)$ in the lexicographic order. The set $W$ is open in $X$ and since $W=q^{-1}(q(W))$, the set $q(W)$ is open in $KD$. Since $\alpha<b<\beta$ we have $(x,b,i)\in W$, whence $z\in q(W)$. Let $V=[0,1]\times B$. We conclude that the set
$q(W)\times V$ is an open neighborhood of $\la z,(x,y) \ra$ in $KD\times [0,1]^2$ disjoint from $F$.

\medskip

\textit{Case 3:} $a=x$, $b\in\{0,1\}$ and $(b,i)\neq (0,0),(1,1)$. As in the previous case, we have $b\neq y$. Let $A$ and $B$ be two disjoint open sets in $[0,1]$ satisfying $b\in A$ and $y\in B$. Since $A$ is an open neighborhood of $b$ in $[0,1]$, there are $\alpha,\beta\in [0,1]$ with $b\in [\alpha,\beta]\subseteq A$. Define $W$ to be the set of all points in $[0,1]\times[0,1]\times\{0,1\}$ that lie strictly between $(x,\alpha,0)$ and $(x,\beta,1)$ in the lexicographic order. The set $W$ is open in $X$ and since $W=q^{-1}(q(W))$, the set $q(W)$ is open in $KD$. Since $(b,i)\neq (0,0),(1,1)$, we have $(x,b,i)\in W$, whence $z\in q(W)$. Let $V=[0,1]\times B$. We conclude that the set
$q(W)\times V$ is an open neighborhood of $\la z,(x,y) \ra$ in $KD\times [0,1]^2$ disjoint from $F$.

\medskip

The proof of the claim is finished.
\end{proof}
Let
$$\pi_1: KD\times [0,1]^2\to KD \quad \mbox{and}\quad \pi_2:KD\times [0,1]^2\to [0,1]^2$$
be projections. First observe that $\pi_1(F)=KD$, for if $q(x,y,i)\in KD$ then $\la q(x,y,i),(x,y) \ra \in F$. So $F$ maps continuously onto $KD$. Next, observe that
if $p=(x,y)\in [0,1]^2$, then
$$\pi_2^{-1}(p)\cap F=\{\la q(x,y,0), p \ra,\; \la q(x,y,1),p \ra,\;\la q(x,0,0), p\ra\}.$$
So $F$ is 3-fibered.
\end{proof}

\section{Continuous images of $n$-fibered compacta}

The purpose of this section is to show that $3$-fibered cannot be improved to $2$-fibered in Example \ref{Example D}.
According to \cite{AviTod_metricdegree},

\begin{defin} We say that the \textit{open degree} of $K$ does not exceed $n$ and write $odeg(K)\leq n$ if there exists a countable family $\mathcal{O}$ of open sets such that for every $n+1$ points $x_0,\ldots,x_n\in K$ there exists $W_i\in \mathcal{O}$ such that $x_i\in W_i$ for $i=0,\ldots,n$ and $\bigcap_{i=0}^n W_i = \emptyset$.
\end{defin}

The following theorem is an unpublished result of the first author and Todorcevic:

\begin{thrm}\label{odeg_characterization}
	A compact space $K$ satisfies $odeg(K)\leq n$ if and only if $K$ is a continuous image of an $n$-fibered compactum.
\end{thrm}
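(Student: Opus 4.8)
The plan is to prove both implications of Theorem~\ref{odeg_characterization} directly from the definitions, using characteristic-function representations into Cantor cubes.

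\textbf{The easier direction.} First I would show that if $K$ is a continuous image of an $n$-fibered compactum $L$, then $odeg(K)\le n$. Let $f:L\to K$ be a continuous surjection and $g:L\to M$ a continuous map onto a metrizable compactum with $|g^{-1}(z)|\le n$ for all $z\in M$. Fix a countable base $\mathcal{B}$ for $M$. For $B\in\mathcal{B}$ put $V_B=K\setminus f(L\setminus g^{-1}(B))$; since $f$ is a closed map and $g$ is continuous, $V_B$ is open in $K$, and $f^{-1}(V_B)\subseteq g^{-1}(B)$. Let $\mathcal{O}=\{V_B:B\in\mathcal{B}\}$. Given $n+1$ distinct points $x_0,\dots,x_n\in K$, pick $y_i\in f^{-1}(x_i)$; the points $g(y_0),\dots,g(y_n)$ cannot all coincide (otherwise $n+1$ points of a fiber of $g$), so using that $M$ is metrizable (hence has a base separating points in the required combinatorial way) one finds $B_0,\dots,B_n\in\mathcal{B}$ with $g(y_i)\in B_i$ and $\bigcap_i B_i=\emptyset$; then $x_i\in V_{B_i}$ and $\bigcap_i f^{-1}(V_{B_i})\subseteq\bigcap_i g^{-1}(B_i)=\emptyset$, so $\bigcap_i V_{B_i}=\emptyset$ since $f$ is onto. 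A small wrinkle: in general $n+1$ points of $M$ need not admit basic neighborhoods with empty common intersection unless at least two are distinct, which is exactly what we have; I would make this combinatorial lemma about metric spaces explicit (it is essentially $odeg(M)\le 1$ for metric $M$, refined to handle multiplicities).

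\textbf{The harder direction.} Now suppose $odeg(K)\le n$, witnessed by a countable family $\mathcal{O}=\{W_k:k\in\omega\}$ of open sets. The idea is to build the $n$-fibered preimage $L$ inside $K\times C$ for the Cantor set $C=\{0,1\}^\omega$. For each $k$ choose (using normality/complete regularity and the compact Hausdorff structure) a way to ``resolve'' membership in $W_k$: concretely, let $L\subseteq K\times\{0,1\}^\omega$ be the set of pairs $\langle x,\sigma\rangle$ such that for every $k$, $\sigma(k)=1$ forces $x\in\overline{W_k}$ and $\sigma(k)=0$ forces $x\notin W_k$ — i.e. $L=\bigcap_k\bigl[(\overline{W_k}\times\{\sigma:\sigma(k)=1\})\cup((K\setminus W_k)\times\{\sigma:\sigma(k)=0\})\bigr]$, which is closed, hence compact, and projects onto $K$ because each $x$ has at least one admissible $\sigma$ (send $k\mapsto 1$ iff $x\in W_k$; then $x\in W_k\subseteq\overline{W_k}$ or $x\notin W_k$). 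The second projection $g:L\to\{0,1\}^\omega$ lands in a metrizable compactum, and the key claim is that each fiber $g^{-1}(\sigma)$ has at most $n$ points. Indeed, if $x_0,\dots,x_n$ were $n+1$ distinct points with $\langle x_i,\sigma\rangle\in L$, apply the $odeg$ hypothesis to get $W_{k_0},\dots,W_{k_n}\in\mathcal{O}$ with $x_i\in W_{k_i}$ and $\bigcap_i W_{k_i}=\emptyset$; admissibility forces $\sigma(k_i)=1$ hence $x_i\in\overline{W_{k_i}}$, but I need $x_i\in W_{k_i}$ itself to derive a contradiction, and also all the $k_i$ equal would be needed for a single coordinate. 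I expect this to be the main obstacle: reconciling the single value $\sigma(k)$ at a coordinate with the fact that $odeg$ supplies possibly distinct indices $k_i$, and closing the gap between $W_k$ and $\overline{W_k}$.

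To overcome this I would refine the construction. Rather than one bit per $W_k$, use for each pair $k$ a ternary-like or more elaborate tag: enlarge $\mathcal{O}$ so that it is closed under the operation $W\mapsto$ ``shrink'' (for each $W\in\mathcal{O}$ and each point, a smaller member of $\mathcal{O}$ whose closure sits inside $W$), which is possible since $K$ is compact Hausdorff and we may take $\mathcal{O}$ to be a countable \emph{base-like} family — in fact one may assume $\mathcal{O}$ consists of cozero sets of a fixed countable family $\{f_m\}$ of continuous functions $K\to[0,1]$, and then encode in the Cantor coordinate the values $\lfloor 2^j f_m(x)\rfloor$ for all $m,j$; the second projection $g$ becomes the diagonal map $x\mapsto(f_m(x))_m\in[0,1]^\omega$, which is metrizable-valued, and $L$ can be taken to be the graph closure. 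The fibers of $g$ restricted to that graph are singletons unless the $f_m$ fail to separate points; so instead one takes $\mathcal{O}$ to be just enough functions that the combinatorial $odeg$ condition holds, and argues that on any fiber $g^{-1}(z)$ the points are pairwise ``$\mathcal{O}$-inseparable'', whence by the contrapositive of the $odeg$ condition there are at most $n$ of them. The routine verifications — that $L$ is closed, that $\pi_1|_L$ is onto, that $\pi_2|_L$ has metrizable (indeed countable) range — I would leave as straightforward checks, and concentrate the write-up on the fiber-size estimate, which is where the constant $n$ from $odeg$ converts exactly into the $n$ of $n$-fibered.
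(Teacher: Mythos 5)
The converse direction ($odeg(K)\le n \Rightarrow K$ is an image of an $n$-fibered compactum) is where your proposal breaks down, and the repair you sketch cannot work. If, as you propose, the witnessing family $\mathcal{O}$ could be taken to consist of cozero sets (or preimages of open sets) of a fixed countable family of continuous functions $f_m:K\to[0,1]$, then membership in each $W\in\mathcal{O}$ would be determined by the value $g(x)$ of the diagonal map $g=(f_m)_m:K\to[0,1]^\omega$, and your ``pairwise $\mathcal{O}$-inseparable'' fiber argument would show that $g$ itself has fibers of size at most $n$ --- i.e.\ that $K$ is $n$-fibered, not merely an image of an $n$-fibered space. That conclusion is strictly stronger than the theorem and false in general: $A(\mathfrak{c})$ is a continuous image of the $2$-fibered Alexandroff double circle, so $odeg(A(\mathfrak{c}))\le 2$ by the easy direction, yet it is not first countable, hence not metrizably fibered, let alone $2$-fibered. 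Concretely, on $A(\mathfrak{c})$ no countable witnessing family can consist of cozero sets (a cozero set missing the point at infinity is countable), so the reduction ``we may assume $\mathcal{O}$ consists of cozero sets of countably many functions'' is simply unavailable. The missing idea, which the paper uses, is to record \emph{choices} rather than function values: restate $odeg(K)\le n$ with closed sets (for any $n+1$ distinct points there are $F_0,\dots,F_n\in\mathcal{F}$ with $x_i\notin F_i$ and $F_0\cup\dots\cup F_n=K$), let $Z$ be the countable set of covering $(n+1)$-tuples $F=(F_0,\dots,F_n)$ from $\mathcal{F}$, and set $L=\{(x,(i_F)_{F\in Z})\in K\times\{0,\dots,n\}^Z:\ x\in\bigcap_{F\in Z}F_{i_F}\}$. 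The second projection is then at most $n$-to-one because a single coordinate $i_F$ defeats all $n+1$ points of a putative large fiber simultaneously --- exactly what your one-bit-per-$W_k$ scheme (with its distinct indices $k_i$ and the $W_k$ versus $\overline{W_k}$ gap) could not achieve, since points of $L$ over the same $x$ are allowed to differ in these choice coordinates.

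Your ``easier'' direction also has a slip, though a repairable one. With $V_B=K\setminus f(L\setminus g^{-1}(B))$ one has $x\in V_B$ if and only if the \emph{entire} fiber $f^{-1}(x)$ lies in $g^{-1}(B)$, so choosing a single preimage $y_i$ with $g(y_i)\in B_i$ does not give $x_i\in V_{B_i}$. To fix it, let $B$ range over finite unions of basic sets and cover the compact sets $K_i=g(f^{-1}(x_i))$: these $n+1$ compacta have empty intersection (a common point would place $n+1$ distinct $f$-preimages inside one $g$-fiber), hence by compactness they admit such neighborhoods $B_i$ with $\bigcap_i B_i=\emptyset$, and then $\bigcap_i V_{B_i}=\emptyset$ because $f$ is onto. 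The paper argues this direction differently, via a dyadic tree of closed subsets of $M$ and K\"onig's lemma, producing closed witnesses $f(g^{-1}(A_i))$ directly; either route is fine here, but the converse direction genuinely needs the choice-function construction (or an equivalent), not the cozero-set reduction.
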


\begin{proof}
	It is convenient to rephrase the definition of the open degree in terms of closed sets instead of open sets. We have that $odeg(K)\leq n$ if and only if there exists a countable family $\mathcal{F}$ of closed sets such that whenever we are given $n+1$ different points $x_0,\ldots,x_n\in K$ there exist $F_0,\ldots,F_n\in\mathcal{F}$ such that $x_i\not\in F_i$ and $\bigcup_{i=0}^n F_i = K$.

	Suppose first that there are continuous surjections $f:L\to K$ and $g:L\to M$ such that $M$ is metric and $|g^{-1}(x)|\leq n$ for all $x\in M$. Since every compact metric space is a continuous image of the Cantor set, we can find a tree $\mathcal{T} = \{M_s : s\in 2^{<\omega}\}$ of closed subsets of $M$ such that
	\begin{itemize}
		\item $M_\emptyset = M$,
		\item $M_s = M_{s^\frown 0}\cup M_{s^\frown 1}$ for all $s\in 2^{<\omega}$,
		\item $\left|\bigcap_{m<\omega}M_{\sigma|m}\right| = 1$ for all  $\sigma\in 2^{\omega}$.
		\end{itemize}
		Define
		$$\mathcal{F} = \{f(g^{-1}(A)) :A \text{ is a finite union of sets from }\mathcal{T}\}.$$
		Take a set of $n+1$ different points $X=\{x_0,\ldots,x_{n}\}\subset K$. We claim that there exists $m<\omega$ such that $X\not\subset f(g^{-1}(M_s))$ for all $s\in 2^m$. Otherwise, by K\"onig's lemma, there exists $\sigma\in 2^\omega$ such that $X\subset \bigcap_{m<\omega}f(g^{-1}(M_{\sigma|m}))$. But a compactness argument gives that
		$$\bigcap_{m<\omega}f(g^{-1}(M_{\sigma|m})) = f\left(\bigcap_{m<\omega}g^{-1}(M_{\sigma|m})\right) = f\left(g^{-1}\left(\bigcap_{m<\omega}M_{\sigma|m}\right) \right),$$
		that has cardinality at most $n$, so it cannot contain $X$. Now, define
		$$A_i = \bigcup\{M_s : s\in 2^m, x_i\not\in f(g^{-1}(M_s))\}.$$
		The sets $F_i = f(g^{-1}(A_i))$ are as desired.

		Now suppose that $\mathcal{F}$ is a family of closed sets that witnesses that $odeg(K)\leq n$. Consider the countable set
		$$Z = \{F=(F_0,\ldots,F_n) : F_i\in\mathcal{F} \mbox{ and } F_0\cup\cdots\cup F_n= K\}. $$
		Let
		$$ L = \left\{(x,(i_{F})_{F\in Z}) \in K \times \{0,\ldots,n\}^Z : x\in \bigcap_{F\in Z}F_{i_F}\right\}.$$
		The set $L$ is closed, therefore compact. The projection on the first coordinate $L\to K$ is onto. The fact that  $\mathcal{F}$ witnesses that $odeg(K)\leq n$ implies that the projection on the second coordinate $L\to \{0,\ldots,n\}^Z$ is $n$-to-one.
	\end{proof}

\begin{lemma}\label{finiteremovelemma}
	Let $K$ be a compact space and $F\subset K$ be a finite $G_\delta$-subset of $K$. Suppose also that $odeg(L)<n$ for all closed $L\subset K\setminus F$. Then $odeg(K)<n$
\end{lemma}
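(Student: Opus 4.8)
The goal is to remove a finite $G_\delta$ set $F$ from a compact space $K$ while keeping control of the open degree: given that $odeg(L)<n$ for every closed $L\subseteq K\setminus F$, we want to conclude $odeg(K)<n$, i.e. that there is a countable family of closed sets witnessing $odeg(K)\le n-1$ in the closed-set reformulation established in the proof of Theorem~\ref{odeg_characterization}. My plan is to build a suitable countable family of closed sets by combining two ingredients: (1) a countable family coming from a single closed set $L\subseteq K\setminus F$ that is ``large enough'', namely $K$ minus a small clopen-type neighborhood of $F$, and (2) finitely many closed sets that separate the points of $F$ from everything else. Since $F$ is a finite $G_\delta$, write $F=\bigcap_m U_m$ with $U_m$ open and, by compactness and normality, arrange a decreasing sequence of open sets $U_0\supseteq\overline{U_1}\supseteq U_1\supseteq\overline{U_2}\supseteq\cdots$ with $\bigcap_m U_m=\bigcap_m\overline{U_m}=F$. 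Put $L_m=K\setminus U_m$, a closed subset of $K\setminus F$; by hypothesis each $L_m$ has $odeg(L_m)<n$, witnessed by a countable family $\mathcal{F}_m$ of closed (in $L_m$, hence in $K$) sets such that any $n$ distinct points of $L_m$ can be covered leaving one point out of each of $n$ chosen members.

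The main construction is then as follows. Take $n+1$ distinct points $x_0,\dots,x_n\in K$. \emph{Case A:} at most one $x_i$ lies in $F$. Then at least $n$ of the points lie in $K\setminus F$, hence all lie in some $L_m$ (choose $m$ large, using that $F=\bigcap\overline{U_m}$ and the $n$ points outside $F$ are a finite set at positive ``distance'' from $F$ in the sense that each lies outside some $\overline{U_m}$); apply the witness family $\mathcal{F}_m$ to those $n$ points to get $F_1,\dots,F_{n-1}$ (wait — we only get $n-1$ sets from an $odeg<n$ witness; we need $n$ sets here). So we must also throw in, for each $m$, the single extra closed set $K\setminus U_{m}$ itself, or rather $\overline{U_{m+1}}$: the point is that the one point $x_i\in F$ (if any) is separated from the $n$ points in $L_m$ by the pair $(\overline{U_{m+1}}, L_m)$ since $x_i\in U_{m+1}\subseteq$ interior, so $x_i\notin L_m$, and the other points, being in $L_m$, are not in $\overline{U_{m+1}}$ provided $m$ is large. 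Thus the global witness family is $\mathcal{F}=\{\overline{U_m}:m\in\omega\}\cup\{L_m:m\in\omega\}\cup\bigcup_m\mathcal{F}_m$, which is countable, together with one more ingredient handling \emph{Case B:} two or more $x_i$ lie in $F$. Since $|F|$ is finite, there are only finitely many pairs of points of $F$, so it suffices to add, for each pair $\{a,b\}\subseteq F$ of distinct points, a single closed set $F_{a,b}$ with $a\notin F_{a,b}$, $b\notin F_{a,b}$, $F_{a,b}=K\setminus(V_a\cup V_b)$ where $V_a,V_b$ are disjoint open neighborhoods; but then we need to cover the \emph{other} $n-1$ points too, which we do by one more application of an $odeg$ witness on the closed set $K\setminus(V_a\cup V_b)$ — and this set is again a closed subset of $K\setminus F$ only if we shrink $V_a,V_b$ so that $V_a\cup V_b\supseteq F$; that requires $F$ to have a neighborhood base of sets whose complement avoids $F$, which is automatic. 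So in Case B we use the finitely many families witnessing $odeg<n$ on the finitely many closed sets $K\setminus(\text{nbhd of }F)$, plus the finitely many separating sets $F_{a,b}$.

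Assembling: the final witness family for $odeg(K)\le n-1$ is the countable union of all the $\mathcal{F}_m$, the countably many sets $\overline{U_m}$ and $L_m$, the finitely many sets $F_{a,b}$ and the finitely many witness families on $K\setminus(V_a\cup V_b)$ for a fixed finite collection of neighborhoods of $F$. For any $n+1$ distinct points, either Case A or Case B applies; in each case we exhibit $n$ members of the family, one missing each point, whose union is $K$ (here we must double-check the union condition: in Case A, $L_m\cup\overline{U_{m+1}}$ need not be all of $K$ — it is, because $L_m=K\setminus U_m$ and $U_m\subseteq\overline{U_{m+1}}$... no, $\overline{U_{m+1}}\subseteq U_m$, so $L_m\cup\overline{U_{m+1}}\ne K$; instead we use $L_{m+1}\cup\overline{U_{m+1}}=(K\setminus U_{m+1})\cup\overline{U_{m+1}}=K$). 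Each of the ``covering'' sets from an $odeg$ witness on $L_{m+1}$ must be replaced by that set union $\overline{U_{m+1}}$ (still closed, still in the family if we close the family under finite unions with the $\overline{U_m}$'s — or just add these unions explicitly, still countably many), so that they collectively cover $K$ rather than just $L_{m+1}$. \textbf{The main obstacle} is exactly this bookkeeping of the covering condition $\bigcup_{i=0}^{n-1}F_i=K$: unlike the separation condition $x_i\notin F_i$, it is global, and each piece we use only covers a closed piece of $K$, so every member of the family must be enlarged by the appropriate $\overline{U_m}$ (and, in Case B, by the complement of the relevant neighborhood of $F$) to patch the hole around $F$ — and one must verify these enlargements do not reintroduce the points they were meant to omit, which is why the decreasing, ``well-separated'' sequence $\overline{U_{m+1}}\subseteq U_m$ is essential and why the hypothesis is ``$odeg<n$'', giving a spare slot for the patch set.
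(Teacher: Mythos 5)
Your closed-set strategy (the interleaved exhaustion $L_m=K\setminus U_m$ with $\overline{U_{m+1}}\subseteq U_m$ and $\bigcap_m U_m=F$, witness families on the $L_m$, patching the covering condition with the sets $\overline{U_m}$, and pairwise separation of the finitely many points of $F$) is viable, and it is the dual of what the paper actually does: the paper stays with the open-set form of the definition, exhausting $K\setminus F$ by open sets $V_0\subseteq\overline{V_0}\subseteq V_1\subseteq\cdots$, taking open witnesses for $odeg(\overline{V_m})<n$, and adding countable neighborhood bases at the points of $F$ (which exist because a finite $G_\delta$ in a compact space consists of points of countable character). There the bookkeeping is the local empty-intersection condition, so no patching is needed; your route trades that for the global covering condition, which is exactly the bookkeeping you struggle with.

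The concrete flaw in your writeup is the point count: you verify the witness condition for $n+1$ distinct points, which is the test for $odeg(K)\le n$, whereas the conclusion $odeg(K)<n$ requires handling every set of $n$ distinct points. This is not just cosmetic, because with the correct count the case analysis is different from what you wrote. If none of the $n$ points lies in $F$, they all lie in $L_m$ for large $m$ (indeed with $x_i\notin\overline{U_m}$), the witness for $odeg(L_m)\le n-1$ applies to exactly these $n$ points, and you patch each chosen set $G_i$ to $G_i\cup\overline{U_m}$, getting union $L_m\cup\overline{U_m}=K$ without reintroducing any $x_i$; your ``spare slot'' is not available and not needed here — this is the only place where the full strength of the hypothesis is used. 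If exactly one point lies in $F$, you have only $n-1$ points in $L_m$, and the witness families cannot be invoked at all (for $n-1$ points they would assert $odeg(L_m)\le n-2$, which you do not have); instead this case is settled entirely by assigning $L_m$ to the $F$-point and $\overline{U_m}$ (repeated) to the others, using $L_m\cup\overline{U_m}=K$ — your ``pair'' remark gestures at this but, as you noticed, with the wrong pair ($L_m\cup\overline{U_{m+1}}\neq K$). If two or more points lie in $F$, the complements $K\setminus V_a$ and $K\setminus V_b$ of disjoint neighborhoods already have union $K$, so your detour through a witness family on $K\setminus(V_a\cup V_b)$ (and the shrinking needed to make it a subset of $K\setminus F$) is unnecessary: since repetitions are allowed, you only need some member of the family omitting each remaining point, and $\overline{U_m}$ (for points outside $F$) and the sets $K\setminus V_c$ (for points of $F$) provide these. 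With these corrections your argument closes; as written, it proves only $odeg(K)\le n$.
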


\begin{proof}
	Consider $$V_0\subset \overline{V_0} \subset V_1 \subset \overline{V_1}\subset \cdots $$ a chain of open sets such that $\bigcup V_i = K\setminus F$.
	For every $m$ take a countable family $\mathcal{O}_m$ of open subsets of $V_{m+1}$ whose intersections with $\overline{V_m}$ witness the fact that $odeg(\overline{V_m})<n$. Put $$\widetilde{\mathcal{O}}_m=\{A\cap V_m:A\in \mathcal{O}_m\}.$$
For every $x\in F$, take a countable basis $\mathcal{N}_x$ of neighborhoods of $x$. The family $\bigcup_m \widetilde{\mathcal{O}}_m \cup \bigcup_{x\in F}\mathcal{N}_x$ witnesses that $odeg(K)<n$.
\end{proof}

\begin{thrm}
	If $K$ is a first-countable compact space and $odeg(K)\leq n$, then there is a continuous surjection $\phi:K\to Z$ onto a metrizable space $Z$  such that $odeg(\phi^{-1}(t))<n$ for all $t\in Z$.
\end{thrm}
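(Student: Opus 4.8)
The plan is to reduce the hypothesis to the concrete model of Theorem \ref{odeg_characterization}, manufacture the metric quotient $\phi$ by collapsing a suitable equivalence relation carried by the \emph{metric} factor of that model, and finally bound $odeg$ on the fibres using Lemma \ref{finiteremovelemma}.

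First I would fix continuous surjections $f\colon L\to K$ and $g\colon L\to M$ with $L$ compact, $M$ compact metric and $|g^{-1}(m)|\le n$ for all $m\in M$ (Theorem \ref{odeg_characterization}); for $k\in K$ put $A_k=g(f^{-1}(k))$, so that $k\mapsto A_k$ is upper semicontinuous and $\bigcup_kA_k=M$. Call $\psi\in C(M)$ \emph{admissible} if $\psi\circ g$ is constant on each fibre of $f$, equivalently if $\psi\circ g=\eta\circ f$ for some $\eta\in C(K)$ (necessarily continuous, $f$ being a quotient map). The admissible functions form a unital uniformly closed subalgebra $\mathcal A\subseteq C(M)$, which is separable because $M$ is compact metric. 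Let $\sim$ be the closed equivalence relation on $M$ defined by "$m\sim m'$ iff $\psi(m)=\psi(m')$ for all admissible $\psi$"; then $M/\!\sim$ is compact metrizable, the map $L\xrightarrow{g}M\to M/\!\sim$ is constant on $f$-fibres and hence factors through $f$ as a continuous surjection $\phi\colon K\to Z:=M/\!\sim$, and $\phi(x)=\phi(y)$ exactly when $A_x$ and $A_y$ lie in a common $\sim$-class (each $A_k$ lies in a single class, being linked by the generating pairs $(g(l),g(l'))$, $f(l)=f(l')$). This settles metrizability of $Z$.

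Now fix $t\in Z$, let $C\subseteq M$ be the corresponding ($\sim$-)class and $Y=\phi^{-1}(t)$. Routine point-set checks give $Y=f(g^{-1}(C))$ and $f^{-1}(Y)\subseteq g^{-1}(C)$, so with $L_Y:=f^{-1}(Y)$ the map $g\colon L_Y\to C$ is onto and $\le n$-to-one and $f\colon L_Y\to Y$ is onto. If $|C|=1$ then $|Y|\le n$, hence $odeg(Y)\le 1<n$ and we are done. Otherwise let $F=\{y\in Y:|A_y|\ge 2\}$ and consider $\rho\colon Y\setminus F\to C$ with $A_y=\{\rho(y)\}$; it is continuous (upper semicontinuity plus single-valuedness). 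The combinatorial heart of the matter is that $\rho$ is at most $(n-1)$-to-one: if $y_1,\dots,y_n$ were $n$ distinct points with $A_{y_i}=\{m\}$, then $g^{-1}(m)$ (of cardinality exactly $n$) would be partitioned by the singletons $f^{-1}(y_i)$; picking $m'\in C\setminus\{m\}$ linked to $m$ by a finite chain of generating pairs, the first step leaving $m$ is realized by some $k$ with $m\in A_k$ and $A_k\ne\{m\}$, forcing $k=y_i$ for some $i$ and so $A_{y_i}\ne\{m\}$, a contradiction. Hence $Y\setminus F$ is $(n-1)$-fibred over the metric space $C$, so every closed $L\subseteq Y\setminus F$ has $odeg(L)\le n-1<n$; if in addition $F$ is finite it is a finite $G_\delta$ subset of the first-countable space $Y$, and Lemma \ref{finiteremovelemma} yields $odeg(Y)<n$. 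Since $t$ was arbitrary this proves the theorem.

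Two points in the previous paragraph are the genuine obstacle, and I expect the bulk of the work to lie there. First, the contradiction used finite chains of generating pairs, while the class $C$ was defined via the \emph{closed} relation $\sim$; I must arrange that on each class these two relations agree — or argue around it — which the closed relation does not guarantee a priori (its classes can be strictly larger, e.g.\ absorbing a limit of a chain). Second, I need $F$ to be finite. I would attack both by first normalizing the model — replacing $L$ by the closed graph $\{(k,m)\in K\times M:m\in A_k\}$, over which $g$ becomes the second-coordinate projection (still $\le n$-to-one, since $\{k:m\in A_k\}=f(g^{-1}(m))$ has $\le n$ points) and $f$ the first — and then either iterating the collapse finitely many times or invoking a compactness/first-countability argument on $K$ to see that only finitely many points of each fibre can have $|A_y|\ge 2$. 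The rest (the factorizations, continuity of $\phi$ and $\rho$, the identities $Y=f(g^{-1}(C))$, etc.) is bookkeeping.
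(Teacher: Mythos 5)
The two difficulties you flag at the end are not residual bookkeeping; they are the actual content of the theorem, and neither can be repaired along the lines you sketch. Concretely, take $K=M=[0,1]$ and let $L\subseteq K\times M$ be the union of the diagonal and the graph of $x\mapsto x/2$, with $f,g$ the two coordinate projections; this is already your normalized graph model, and $g$ is at most $2$-to-one, so $n=2$. Here $A_k=\{k,k/2\}$, and an admissible $\psi\in C(M)$ must satisfy $\psi(x)=\psi(x/2)$ for every $x$, hence $\psi(x)=\lim_j\psi(x/2^j)=\psi(0)$: every admissible function is constant. So your $Z=M/\!\sim$ is a single point, the unique fibre is $Y=K$, the class $C=[0,1]$ is strictly larger than every chain-component of the generating relation (those are the countable sets $\{x2^j:j\in\mathbb{Z}\}\cap(0,1]$ and $\{0\}$), and $F=\{y\in Y:|A_y|\ge 2\}=(0,1]$ is infinite, although $K$ is compact, first countable, even metrizable. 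Thus both pillars of your fibre estimate fail for the quotient you build: the ``finite chain of generating pairs'' step behind the $(n-1)$-to-one bound on $\rho$ is unavailable (the closed relation $\sim$ is in general strictly coarser than chain-equivalence, precisely because the chain quotient need not be Hausdorff), and the finiteness of $F$ needed to invoke Lemma~\ref{finiteremovelemma} is simply false, so neither iterating the collapse nor a first-countability argument can rescue it. The gap is therefore in the design of $\phi$ itself, not in the subsequent analysis.

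The paper's proof avoids both problems by never forming a quotient of $M$. It fixes a countable basis $\mathcal{B}$ of $M$, sets $\Gamma=\{(A,B)\in\mathcal{B}\times\mathcal{B}: f(g^{-1}(\overline{A}))\cap f(g^{-1}(\overline{B}))=\emptyset\}$, picks for each pair a continuous $\phi_{(A,B)}:K\to[0,1]$ separating the two saturations, and lets $\phi$ be the diagonal map into $[0,1]^\Gamma$. The payoff is a \emph{one-step} linkage rather than chain linkage: if $\phi(f(x))=\phi(f(y))$, then by running over shrinking basic neighborhoods of $g(x)$ and $g(y)$ and passing to an ultrafilter limit one gets $\tilde{x},\tilde{y}$ with $g(\tilde{x})=g(x)$, $g(\tilde{y})=g(y)$ and $f(\tilde{x})=f(\tilde{y})$. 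With this in hand, for a fibre $\phi^{-1}(t)$ one deletes only $F=f(g^{-1}(s))$ for a \emph{single} point $s\in M$ lying over the fibre --- finite because $g$ is $n$-fibered, and $G_\delta$ by first countability --- and shows that for every closed $A\subseteq\phi^{-1}(t)\setminus F$ the map $g$ is at most $(n-1)$-to-one on $f^{-1}(A)$: if some full fibre $g^{-1}(r)$ sat inside $f^{-1}(A)$, the one-step linkage of $r$ with $s$ would produce a point of $g^{-1}(r)$ mapped by $f$ into $F$, a contradiction; Theorem~\ref{odeg_characterization} and Lemma~\ref{finiteremovelemma} then finish. So the finite set to be removed and the linkage property are both manufactured by the construction of $\phi$, which is exactly the idea missing from your proposal.
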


\begin{proof}
By Theorem \ref{odeg_characterization}, the space $K$ is a continuous image of an $n$-fibered compact space $L$.
Let $f:L\to K$ be a continuous surjection and let $\pi:L\to M$ a continuous map onto a metric space such that $|\pi^{-1}(z)|\leq n$ for all $z\in M$.

We consider a countable basis $\mathcal{B}$ of $M$. Let
$$ \Gamma = \{(A,B) \in \mathcal{B}\times \mathcal{B} : f(\pi^{-1}(\overline{A}))\cap f(\pi^{-1}(\overline{B})) = \emptyset \}.$$

For each $(A,B)\in \Gamma$, we consider a continuous function $\phi_{(A,B)}:K\to [0,1]$ such that
$\phi_{(A,B)}(f(\pi^{-1}(\overline{A}))) = \{0\}$ and $\phi_{(A,B)}(f(\pi^{-1}(\overline{B})))=\{1\}$. We then take
$\phi:K\to [0,1]^\Gamma$ be given by $\phi(x)_{(A,B)} = \phi_{(A,B)}(x)$. Since $\Gamma$ is countable, $[0,1]^\Gamma$ is metrizable. So it is enough to prove that $odeg(\phi^{-1}(t))<n$ for every $t\in [0,1]^\Gamma$.

\begin{claim}
For every $x,y\in L$, if $\phi(f(x)) = \phi(f(y))$, then there exist $\tilde{x},\tilde{y}\in L$ such that $\pi(\tilde{x}) = \pi(x)$, $\pi(\tilde{y}) = \pi(y)$ and $f(\tilde{x})=f(\tilde{y})$.
\end{claim}

\begin{proof}
For every basic neighborhoods $A$ of $\pi(x)$ and $B$ of $\pi(y)$ we must have that $f(\pi^{-1}(\overline{A}))\cap f(\pi^{-1}(\overline{B}))\neq \emptyset$ because otherwise $\phi_{(A,B)}(x)=0$ and $\phi_{(A,B)}(y)=1$ which contradicts that $\phi(f(x)) = \phi(f(y))$. Take $\{A_m\}_{m\in\mathbb{N}}$ and $\{B_m\}_{m\in\mathbb{N}}$ decreasing countable bases of neighborhoods of $\pi(x)$ and $\pi(y)$ respectively. For every $m$ we will be able to find $\tilde{x}_m \in \pi^{-1}(\overline{A}_m)$ and $\tilde{y}_m\in \pi^{-1}(\overline{B}_m)$ such that $f(\tilde{x}_m) = f(\tilde{y}_m)$. If we now take a nonprincipal ultrafilter $\mathcal{U}$, then $\tilde{x} = \lim_\mathcal{U}\tilde{x}_m$ and $\tilde{y} = \lim_\mathcal{U}\tilde{y}_m$ are as desired.
\end{proof}

Now fix $t\in [0,1]^\Gamma$ with $\phi^{-1}(t)\neq\emptyset$, fix $y\in L$ such that $\phi(f(y)) = t$ and $s=\pi(y)$. Let $F = f(\pi^{-1}(s))$. By Lemma~\ref{finiteremovelemma}, it is enough to prove that $odeg(A)<n$ whenever $A\subset \phi^{-1}(t)\setminus F$ is closed. For this, it is enough to prove that $\pi: f^{-1}(A) \to M$ is at most $(n-1)$-to-one (cf. Theorem \ref{odeg_characterization}). If it was not the case, then there must exist $r\in M$ such that $\pi^{-1}(r) \subset f^{-1}(A)$. But given $x\in \pi^{-1}(r)$, notice that $\phi(f(x)) = \phi(f(y))$, so we can consider the $\tilde{x}$ and $\tilde{y}$ provided by the Claim above. Notice that $\tilde{x}\in \pi^{-1}(r)$ since $\pi(\tilde{x}) = \pi(x)$. Also, $f(\tilde{x}) = f(\tilde{y})\not\in A$ because $\pi(\tilde{y})=\pi(y) = s$, so $f(\tilde{y})\in F$. So we found $\tilde{x}\in \pi^{-1}(r) \setminus f^{-1}(A)$ a contradiction.

\end{proof}

\begin{cor}\label{theorem1.3}
	If a first-countable compact $K$ is a continuous image of a 2-fibered compactum, then $K$ is metrizably fibered.
\end{cor}

\begin{proof}
Apply the previous theorem for $n=1$.
\end{proof}

\section{Example E: A first-countable compact $L\Sigma(\leq\omega)$-space which is not a continuous image of any metrizably fibered compactum}

Let $Y=[0,1]\times \mathbb{S}$ be the lexicographic product of $[0,1]$ and the split interval $\mathbb{S}$.
By $I(\la x,s\ra ,\la y,t\ra)$ we denote the set of all points lying strictly between $\la x,s \ra$ and $\la y,t\ra$ in the lexicographic order.

Let $X=[0,1]\times(0,1]$.
For a positive integer $n$ and $z,a,b\in [0,1]$ such that $a<b$ we set (see Figure \ref{figure5} below)
$$U_n(z,a,b)=X\cap\left\{\la z+r\cos(\pi-\pi t), r\sin(\pi-\pi t)\ra\in \RR^2:0<r<\tfrac{1}{n},\;a<t<b\right\}.$$

\begin{figure}[H]
 \caption{The set $U_n(z,a,b)$ consists of the shaded area without the boundary. The angle marked in blue has measure $\pi a$; the angle marked in red has measure $\pi b$.}
 \label{figure5}

 \medskip

 \includegraphics{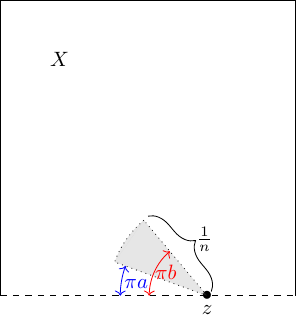}
\end{figure}

We define the following topology on $KE=X\cup Y$:
If $p\in X$ then $U$ is a neighborhood of $p$ in $KE$ if $U$ is a neighborhood of $p$ in $X$ (in its usual product topology). To define neighborhoods of points $p\in Y$ we shall consider the following four cases:

\textit{Case 1:} $p=\la x,y^-\ra\in Y=[0,1]\times \mathbb{S}$ and $y\notin\{0,1\}$.
In this case basic open neighborhoods of $p$ are of the form
$$U_n(x,a,y)\cup I(\la x,a^-\ra, \la x,y^+\ra), \mbox{ where } a<y.$$

\textit{Case 2:} $p=\la x,y^+\ra\in Y=[0,1]\times \mathbb{S}$ and $y\notin\{0,1\}$.
Then basic open neighborhoods of $p$ are of the form
$$U_n(x,y,b)\cup I(\la x,y^-\ra,\la x,b^+\ra), \mbox{ where } y<b.$$

\textit{Case 3:} $p=\la x,0^+\ra\in Y=[0,1]\times \mathbb{S}$
Then basic open neighborhoods of $p$ are of the form
$$U_n(x,0,b)\cup I(\la x-\tfrac{1}{n},\tfrac{1}{2}^-\ra,\la x,b^+\ra), \mbox{ where } b>0.$$

\textit{Case 4:} $p=\la x,1^-\ra\in Y=[0,1]\times \mathbb{S}$
Then basic open neighborhoods of $p$ are of the form
$$U_n(x,a,1)\cup I(\la x,a^-\ra,\la x+\tfrac{1}{n},\tfrac{1}{2}^+\ra), \mbox{ where } a<1.$$

One can readily check that this is a well defined neighborhood system on $KE$ and that the topology defined in this way is Hausdorff, separable and first-countable. It is also evident that the subspace topology on $X$ (respectively, $Y$) agrees with the product topology on $X$ (the lexicographic order topology on $Y$). To show that $KE$ is compact, fix an open cover $\mathcal{U}$ of $KE$. Since $Y$ is a compact subset of $KE$, being the lexicographic product of $[0,1]$ and $\mathbb{S}$, there is a finite subfamily $\mathcal{V}$ of $\mathcal{U}$ that covers $Y$. Now, $KE\setminus \bigcup\mathcal{V} \subseteq X$ is closed in $KE$ and hence it is closed in the product topology of $[0,1]\times [0,1]$. It follows that there is a finite subfamily $\mathcal{V}'\subseteq \mathcal{U}$ that covers $KE\setminus \bigcup\mathcal{V}$. Thus, $\mathcal{V}\cup\mathcal{V}'$ is a finite subcover of $\mathcal{U}$.

\begin{lemma}\label{lemma_dichotomy}
 Let $T\subseteq [0,1]$ be an open interval and let $a>0$. Suppose that $C_0,C_1$ are closed subsets of $KE$ covering the set
 $(T\times \mathbb{S})\cup (T\times (0,a))$.
 Then either
 \begin{enumerate}
  \item The set $X(C_0)=\{x\in T: \exists s\in \mathbb{S}\;\; \la x,s \ra \notin C_0 \}$ is meager in $T$ or
  \item There is an open interval $J\subseteq T$ and $\varepsilon>0$ such that
  $(J\times \mathbb{S})\cup (J\times (0,\varepsilon)) \subseteq C_1$.
 \end{enumerate}
\end{lemma}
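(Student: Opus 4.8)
The plan is to analyze the structure of the cover $\{C_0, C_1\}$ by examining, for each point $x \in T$, the "vertical slice" $\{x\} \times \mathbb{S}$ together with the adjacent piece of $X$, namely $\{x\} \times (0,a)$. The key topological feature to exploit is the peculiar neighborhood system defining $KE$: a basic neighborhood of a point $\langle x, s\rangle \in Y$ with $s \notin \{0^+, 1^-\}$ contains not only an interval in the split-interval direction but also a full "fan" $U_n(x, \cdot, \cdot)$ reaching into $X$, and — crucially in Cases 3 and 4 — the neighborhoods of $\langle x, 0^+\rangle$ and $\langle x, 1^-\rangle$ spill over to involve points $\langle x \pm \tfrac1n, \tfrac12^{\mp}\rangle$ with a \emph{shifted} first coordinate. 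This is what links neighbouring vertical slices together and forces a "clopen-like" spreading behavior of the closed sets $C_0, C_1$.

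First I would set $X(C_0) = \{x \in T : \exists s \in \mathbb{S},\ \langle x,s\rangle \notin C_0\}$ and suppose case (1) fails, i.e. $X(C_0)$ is \emph{not} meager in $T$. Since $\{x\}\times\mathbb S$ is metrizable (it is a copy of the split interval), this is a first-countable situation; I would show that for $x \in X(C_0)$ there is actually a basic neighborhood $V$ of the witnessing point $\langle x, s\rangle$ with $V \cap C_0 = \emptyset$, hence $V \subseteq C_1$ since $\{C_0, C_1\}$ covers the relevant region. The next step is to push this local information around: because $X(C_0)$ is non-meager, it is non-meager in some subinterval, and by a Baire-category argument (partitioning by the integer parameter $n$ and by rational endpoints $a<b$ of the angular apertures in $U_n(x,a,b)$, and by which of the four neighborhood-types occurs) I would find a single $n$, a single pair of rationals, and a set $E \subseteq T$ that is non-meager — indeed comeager in some open subinterval $J_0 \subseteq T$ — such that for every $x \in E$ the \emph{same shape} of neighborhood of $\langle x, s_x\rangle$ lies in $C_1$. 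Taking closures and using that $C_1$ is closed, the union over $x$ in a dense subset of $J_0$ of these uniform neighborhoods has closure containing a set of the form $(J\times\mathbb S)\cup (J\times(0,\varepsilon))$ for a suitable subinterval $J\subseteq J_0$ and $\varepsilon>0$ depending on $n$ and the fixed rational aperture; this is conclusion (2).

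The main obstacle I expect is the last step — promoting "$C_1$ contains a uniform neighborhood of $\langle x,s_x\rangle$ for comeagerly many $x$" to "$C_1$ contains a full product slab $(J\times\mathbb S)\cup(J\times(0,\varepsilon))$." The difficulty is that the witnessing split-interval coordinate $s_x$ varies with $x$, so a single uniform neighborhood does not obviously cover the whole fiber $\{x\}\times\mathbb S$; one must instead run the density/Baire argument a second time \emph{within} a typical fiber, or observe that once $C_1$ contains the $X$-part $J\times(0,\varepsilon)$ on a dense set of $x$'s, closedness of $C_1$ together with the fan-shaped neighborhoods forces $C_1 \supseteq J'\times(0,\varepsilon')$ outright for a smaller interval, and then the neighborhoods of \emph{every} point of $J'\times\mathbb S$ meet $J'\times(0,\varepsilon')\subseteq C_1$, so again by closedness $J'\times\mathbb S\subseteq C_1$. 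Handling Cases 3 and 4 (the coordinate-shifting neighborhoods at $s=0^+$ and $s=1^-$) requires a little care so that the shifts $x\mapsto x\pm\tfrac1n$ stay inside $T$, which is why one works inside a small subinterval $J$ well inside $T$ and shrinks $\varepsilon$ accordingly; everything else is a routine verification using the explicit neighborhood bases.
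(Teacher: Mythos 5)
Your proposal follows essentially the same route as the paper's proof: decompose $X(C_0)$ according to the integer radius and rational angular parameters of fans $U_n(x,q,r)$ disjoint from the closed set $C_0$, use Baire category to find one piece dense in a subinterval, conclude that the corresponding fans cover a slab $J\times(0,\varepsilon)$ which therefore lies in $C_1$, and finally use closedness of $C_1$ together with the fact that every neighborhood of every point of $J\times\mathbb{S}$ meets that slab. The minor overstatements (a whole basic neighborhood need not lie in $C_1$, since it can leave the covered region $(T\times\mathbb{S})\cup(T\times(0,a))$, and density rather than comeagerness of the uniform piece is what is needed and what the category argument directly gives) do not affect the argument, because your concluding observation about the slab and the fan-shaped neighborhoods is exactly the paper's final step.
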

\begin{proof}
Suppose that $X(C_0)$ is non-meager in $T$. We will show that assertion (2) holds. The set $C_0$ is closed in $KE$, so if $x\in X(C_0)$, then for some $s\in \mathbb{S}$, there is a basic open neighborhood $U$ of $\la x,s \ra$ disjoint from $C_0$. Hence, for some positive integer $n$ and rationals $q<r$ we have
 $$U_n(x,q,r)\cap C_0=\emptyset.$$
 We can therefore write $$X(C_0)=\bigcup\{X(n,q,r):n\geq 1, q,r\in \mathbb{Q}\cap[0,1], q<r\},$$ where
 $X(n,q,r)=\{x\in T: U_n(x,q,r)\cap C_0=\emptyset\}$.
 Since $X(C_0)$ is not meager in $T$, we can find $n,q,r$ so that
 $X(n,q,r)$ is not nowhere dense in $T$, i.e. $X(n,q,r)$ is dense in some open interval $J'\subseteq T$. It follows that for sufficiently small $\varepsilon>0$ and some interval $J\subseteq J'$, the family $\{U_n(x,q,r):x\in X(n,q,r))\}$ covers the whole rectangle $J\times (0,\varepsilon)$, whence $J\times (0,\varepsilon)$ is disjoint from $C_0$. Since $C_0\cup C_1=KE$, we conclude that $J\times (0,\varepsilon)\subseteq C_1$.
 Observe that if $\la x ,s\ra\in J\times \mathbb{S}$, then every open neighborhood of $\la x ,s\ra$ in $KE$ meets $J\times (0,\varepsilon)$. Consequently,  $J\times \mathbb{S}\subseteq C_1$ because $C_1$ is closed. This gives assertion (2).
\end{proof}

\begin{prop}\label{proposition_comeager}
 Let $I\subseteq [0,1]$ be an open interval.
 Suppose that $\mathcal{C}$ is a finite family of closed subsets of $KE$.
 If $\mathcal{C}$ covers a set $(T\times \mathbb{S}) \cup (T\times (0,a))$, for some open interval $T\subseteq I$ and some $a>0$, then there is $C\in \mathcal{C}$ and an open interval
 $J\subseteq I$ such that the set $\{x\in J:\forall s\in \mathbb{S}\;\;\la x,s \ra\in C\}$ is comeager in $J$.
 \end{prop}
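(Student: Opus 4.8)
The plan is to iterate Lemma~\ref{lemma_dichotomy} finitely many times, peeling off the sets of $\mathcal{C}$ one at a time. Enumerate $\mathcal{C}=\{C_0,C_1,\dots,C_{k}\}$. We will produce a decreasing chain of open intervals $T=T_0\supseteq T_1\supseteq\cdots$ and a decreasing chain of positive reals $a=a_0\geq a_1\geq\cdots$ so that at stage $j$ the subfamily $\{C_j,C_{j+1},\dots,C_k\}$ covers $(T_j\times\mathbb{S})\cup(T_j\times(0,a_j))$, while no earlier $C_i$ ($i<j$) has yet been shown to ``win''. At each stage we apply Lemma~\ref{lemma_dichotomy} with $C_0$ there equal to our current $C_j$ and $C_1$ there equal to the union $C_{j+1}\cup\cdots\cup C_k$ (which is closed, being a finite union of closed sets, and by induction hypothesis together with $C_j$ covers the rectangle-plus-strip over $T_j$).

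First I would treat the dichotomy. If alternative~(1) of Lemma~\ref{lemma_dichotomy} fails for $C_j$ over $T_j$, i.e. $X(C_j)=\{x\in T_j:\exists s\in\mathbb{S}, \la x,s\ra\notin C_j\}$ is \emph{meager} in $T_j$, then its complement $\{x\in T_j:\forall s\in\mathbb{S}, \la x,s\ra\in C_j\}$ is comeager in $T_j$; in that case we are done immediately by taking $C=C_j$ and $J=T_j\subseteq I$. Otherwise alternative~(2) holds: there is an open interval $J'\subseteq T_j$ and $\varepsilon>0$ with $(J'\times\mathbb{S})\cup(J'\times(0,\varepsilon))\subseteq C_{j+1}\cup\cdots\cup C_k$. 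Now the point is that $\{C_{j+1},\dots,C_k\}$ covers $(J'\times\mathbb{S})\cup(J'\times(0,\varepsilon))$, so we may set $T_{j+1}=J'$, $a_{j+1}=\varepsilon$ and continue the recursion with one fewer set. Since $\mathcal{C}$ is finite, after at most $k$ steps we either have exited via the meager alternative, or we reach a single remaining set $C_k$ covering $(T_k\times\mathbb{S})\cup(T_k\times(0,a_k))$ all by itself; in the latter case $\{x\in T_k:\forall s\in\mathbb{S}, \la x,s\ra\in C_k\}=T_k$, which is trivially comeager in $J=T_k\subseteq I$, and we are done with $C=C_k$.

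The only mild subtlety — and the step I would be most careful about — is checking that at each stage the remaining family genuinely covers the smaller rectangle-plus-strip, so that Lemma~\ref{lemma_dichotomy} applies at the next step. This is exactly what alternative~(2) delivers: it gives inclusion of $(J'\times\mathbb{S})\cup(J'\times(0,\varepsilon))$ into the union $C_{j+1}\cup\cdots\cup C_k$, which \emph{is} the statement that this subfamily covers that set. One should also note that $J'\subseteq T_j\subseteq\cdots\subseteq T_0=T\subseteq I$, so every interval produced is a subinterval of $I$, as required in the conclusion. No new idea beyond Lemma~\ref{lemma_dichotomy} is needed; the whole argument is a finite induction on $|\mathcal{C}|$, and there is no obstacle of substance — the work has already been done in the dichotomy lemma.
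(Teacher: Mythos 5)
Your proposal is correct and follows essentially the same route as the paper: the paper runs an induction on $|\mathcal{C}|$, peeling off one closed set and applying Lemma~\ref{lemma_dichotomy} with $C_0$ the peeled set and $C_1$ the union of the rest, exiting in the meager case and shrinking the interval and strip in the other case, which is exactly your finite recursion. (Only a wording slip: when you say ``alternative~(1) fails'' you in fact describe the case where $X(C_j)$ \emph{is} meager, i.e.\ alternative~(1) holds; the mathematics that follows is the intended and correct dichotomy.)
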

\begin{proof}
 We proceed by induction on the size of $\mathcal{C}$. If $\mathcal{C}$ consists of one set and covers  $(T\times \mathbb{S}) \cup (T\times (0,a))$, then we can obviously take $J=T$.
 Fix $n\geq 1$ and suppose that our assertion holds for any family $\mathcal{C}$ of size $n$. Let $\mathcal{C}'$ be a family of size $n+1$.
 Suppose that for some open interval $T\subseteq I$ and some $a>0$, we have
 \begin{equation}\label{equation_cover}
  (T\times \mathbb{S}) \cup (T\times (0,a))\subseteq \bigcup \mathcal{C}'
 \end{equation}
 Pick $F\in \mathcal{C}'$ and define $\mathcal{C}=\mathcal{C}'\setminus \{F\}$.
 By \eqref{equation_cover}, we may apply Lemma \ref{lemma_dichotomy}, to $C_0=F$ and $C_1=\bigcup\mathcal{C}$. If $X(C_0)$ is meager in $T$, then we take $J=T$ and $C=C_0=F$. Otherwise, by Lemma \ref{lemma_dichotomy}, there is an open interval $J'\subseteq T$ and $\varepsilon >0$ such that
 $$(J'\times \mathbb{S})\cup (J'\times (0,\varepsilon)) \subseteq C_1=\bigcup C.$$
 Since $|\mathcal{C}|=n$, the result follows from the inductive assumption.
 \end{proof}
 \begin{cor}\label{Corollary_comeager}
  If $\mathcal{C}$ is a finite cover of $KE$ consisting of closed sets, then the set
  $$X(\mathcal{C})=\bigcup_{C\in \mathcal{C}}\{x\in [0,1]:\forall s\in \mathbb{S}\;\;\la x,s \ra\in C\}$$
  is comeager in $[0,1]$.
 \end{cor}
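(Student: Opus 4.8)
The plan is to derive Corollary \ref{Corollary_comeager} from Proposition \ref{proposition_comeager} by a routine Baire-category localization. For $C\in\mathcal{C}$ write $A_C=\{x\in[0,1]:\{x\}\times\mathbb{S}\subseteq C\}$, so that $X(\mathcal{C})=\bigcup_{C\in\mathcal{C}}A_C$; it is then enough to prove that $[0,1]\setminus X(\mathcal{C})$ is meager. First I would call a nonempty open interval $J\subseteq[0,1]$ \emph{good} if $A_C$ is comeager in $J$ for some $C\in\mathcal{C}$. The key point is that \emph{every} nonempty open interval $I\subseteq[0,1]$ contains a good subinterval: since $\mathcal{C}$ covers $KE$, it covers $(I\times\mathbb{S})\cup(I\times(0,1))$ (using $I\times\mathbb{S}\subseteq Y$ and $I\times(0,1)\subseteq X$), so Proposition \ref{proposition_comeager}, applied with $T=I$ and $a=1$, yields some $C\in\mathcal{C}$ and an open interval $J\subseteq I$ in which $A_C$ is comeager.

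Next I would observe that goodness localizes, i.e.\ if $A_C$ is comeager in $J$ then it is comeager in every open subinterval of $J$; hence the \emph{rational} good intervals already meet every open interval, and the (countable) family $\mathcal{J}$ of all good open intervals with rational endpoints satisfies that $U:=\bigcup\mathcal{J}$ is open and dense in $[0,1]$. Consequently $[0,1]\setminus U$ is nowhere dense. For each $J\in\mathcal{J}$ fix $C_J\in\mathcal{C}$ with $A_{C_J}$ comeager in $J$; then $J\setminus X(\mathcal{C})\subseteq J\setminus A_{C_J}$ is meager in $J$, hence meager in $[0,1]$ because $J$ is open. Since $\mathcal{J}$ is countable, $U\setminus X(\mathcal{C})=\bigcup_{J\in\mathcal{J}}\bigl(J\setminus X(\mathcal{C})\bigr)$ is meager, and therefore
\[
[0,1]\setminus X(\mathcal{C})\subseteq\bigl([0,1]\setminus U\bigr)\cup\bigl(U\setminus X(\mathcal{C})\bigr)
\]
is meager, which gives the claim.

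I do not expect a genuine obstacle here, as all the substance is already contained in Proposition \ref{proposition_comeager}; the remaining work is the passage from ``$X(\mathcal{C})$ is comeager on a $\pi$-base of intervals'' to ``$X(\mathcal{C})$ is comeager'', and the only mildly delicate (but entirely standard) points are that comeagerness restricts to open subintervals and that a set meager in an open subinterval of $[0,1]$ is meager in $[0,1]$.
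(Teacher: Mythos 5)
Your proposal is correct and takes essentially the same route as the paper: apply Proposition \ref{proposition_comeager} to every open interval (with $T=I$, $a$ arbitrary) to obtain a dense family of intervals on which $X(\mathcal{C})$ is comeager, then pass to comeagerness on all of $[0,1]$. The only difference is that the paper delegates that last localization step to a standard reference (Kechris, 8.29), whereas you prove it directly via rational good intervals and countability, which is a valid substitute.
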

 \begin{proof}
  By Proposition \ref{proposition_comeager}, for every open interval $I\subseteq [0,1]$, we can find an open interval $J\subseteq I$ so that the set
  $X(\mathcal{C})\cap J$ is comeager in $J$. Hence, $X(\mathcal{C})$ is comeager in $[0,1]$ (cf. \cite[8.29]{Ke}).
 \end{proof}

 \begin{thrm}\label{ExampleE is not KLS}
The space $KE$ is not a continuous image of any metrizably fibered compactum.
\end{thrm}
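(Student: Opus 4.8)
The plan is to argue by contradiction, using the tree characterization of the class $KL\Sigma(\leq\omega)$. Assume $KE$ is a continuous image of some metrizably fibered compactum. Then by Proposition \ref{KLSigma_characterization} there is a family $\{C_s:s\in 2^{<\omega}\}$ of closed subsets of $KE$ with $C_\emptyset=KE$, with $C_s=C_{s\frown 0}\cup C_{s\frown 1}$ for all $s\in 2^{<\omega}$, and with $\bigcap_{n}C_{\sigma|n}$ metrizable for every $\sigma\in 2^\omega$. In particular, for each $n$ the finite family $\mathcal{C}_n=\{C_s:s\in 2^n\}$ is a closed cover of $KE$, so Corollary \ref{Corollary_comeager} applies to it.

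First I would set, for each $n$,
\[
G_n=\bigcup_{s\in 2^n}\bigl\{x\in[0,1]:\{x\}\times\mathbb{S}\subseteq C_s\bigr\},
\]
which is precisely the set $X(\mathcal{C}_n)$ of Corollary \ref{Corollary_comeager}, hence comeager in $[0,1]$. Since $[0,1]$ is a Baire space, $\bigcap_n G_n$ is comeager, in particular nonempty; fix $x\in\bigcap_n G_n$. For every $n$ the set $S_n=\{s\in 2^n:\{x\}\times\mathbb{S}\subseteq C_s\}$ is then nonempty, and if $s\in S_{n+1}$ then $s|n\in S_n$, because $C_{s}\subseteq C_{s|n}$. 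Thus $\bigcup_n S_n$ is a finitely branching subtree of $2^{<\omega}$ meeting every level, so by K\"onig's lemma there is $\sigma\in 2^\omega$ with $\sigma|n\in S_n$ for all $n$; that is, $\{x\}\times\mathbb{S}\subseteq C_{\sigma|n}$ for every $n$.

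It follows that $\{x\}\times\mathbb{S}\subseteq\bigcap_n C_{\sigma|n}$, so this intersection, and hence its subspace $\{x\}\times\mathbb{S}$, would have to be metrizable. But $\{x\}\times\mathbb{S}$ is a convex subset of the lexicographically ordered space $Y=[0,1]\times\mathbb{S}$, whose subspace topology inside $KE$ is the order topology; hence $\{x\}\times\mathbb{S}$ carries the order topology of $\mathbb{S}$ and is homeomorphic to the split interval, which is compact, separable and first-countable but not metrizable --- a contradiction. The genuinely delicate content of the argument already sits in the preparatory results, above all Lemma \ref{lemma_dichotomy} and Proposition \ref{proposition_comeager} leading to Corollary \ref{Corollary_comeager}: it is there that the way $X$ and $Y$ are glued together along $[0,1]$ is exploited to force comeagerness of $X(\mathcal{C})$. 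Granting those, the remaining steps --- a Baire-category intersection together with K\"onig's lemma --- are routine, and this gluing-based comeagerness is the point I expect to demand the most care.
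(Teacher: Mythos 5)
Your argument is correct and is essentially the paper's own proof: apply Corollary \ref{Corollary_comeager} to each level $\mathcal{C}_n=\{C_s:s\in 2^n\}$, intersect the comeager sets by Baire category, extract a branch $\sigma$ via K\"onig's lemma, and conclude that $\bigcap_n C_{\sigma|n}$ contains the nonmetrizable copy $\{x\}\times\mathbb{S}$ of the split interval. The only cosmetic difference is that you phrase it as a contradiction while the paper directly exhibits a branch violating condition (iii) of Proposition \ref{KLSigma_characterization}.
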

\begin{proof}
Let $\mathcal{C}=\{C_t:t\in 2^{< \omega}\}$ be an arbitrary family of closed subsets of $KE$ such that
$C_\emptyset=KE$ and $C_t=C_{t\frown 0}\cup C_{t\frown 1}$, for every $t\in 2^{<\omega}$.
According to Proposition \ref{KLSigma_characterization}, it is enough to find $\sigma\in 2^\omega$ so that $\bigcap_n C_{\sigma|n}$ is nonmetrizable.

For $n\in \omega$ denote $\mathcal{C}_n=\{C_t:|t|=n\}$. By Corollary \ref{Corollary_comeager}, for every $n\in \omega$, the set
$$X(\mathcal{C}_n)=\bigcup_{C\in \mathcal{C}_n}\{x\in [0,1]:\forall s\in \mathbb{S}\;\;\la x,s \ra\in C\}$$
is comeager in $[0,1]$, whence $\bigcap_{n\in \omega} X(\mathcal{C}_n)\neq\emptyset$. Pick $a\in \bigcap_{n\in \omega} X(\mathcal{C}_n)$. For every $n$ there is $t\in 2^{<\omega}$ of length $n$ such that for every $s\in \mathbb{S}$ we have $\la a,s \ra\in C_t$. It follows that the set
$$\{t\in 2^{<\omega}:\forall s\in \mathbb{S}\;\;\la a,s \ra\in C_t\}$$ is an infinite tree and thus by K\"onig's lemma it has an infinite branch $\sigma$. Now, the set $\bigcap_{n\in\omega}C_{\sigma|n}$ is nonmetrizable because it contains
a copy of the split interval $\{a\}\times \mathbb{S}$.
\end{proof}

The corollary below gives a partial answer to a question of Tkachuk \cite[Problem 3.4]{Tkachuk}.

\begin{cor}
The space $KE$ is a compact first-countable $L\Sigma(\leq\omega)$-space which is not a continuous image of any compact metrizably fibered space.
\end{cor}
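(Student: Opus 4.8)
The plan is to verify the three asserted properties in turn, observing that two of them are already in hand. That $KE$ is compact and first‑countable was established when the topology on $KE$ was defined, and the assertion that $KE$ is not a continuous image of any compact metrizably fibered space is exactly Theorem \ref{ExampleE is not KLS}. So the only thing left to prove is that $KE$ is an $L\Sigma(\leq\omega)$‑space, and for this I would exhibit a countable cover of $KE$ by closed sets whose diagonal intersections are metrizable, and then invoke Proposition \ref{LSigma-characterization}.

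I would build the cover from two pieces, one for $Y$ and one for $X$. For the $Y$‑part: the map $[0,1]\times\mathbb{S}\to[0,1]\times[0,1]$ that forgets the ``$\pm$'' in the second coordinate is a continuous surjection onto a compact metric space with all fibers of cardinality at most $2$, so $Y\cong[0,1]\times\mathbb{S}$ is metrizably fibered and hence an $L\Sigma(\leq\omega)$‑space; by Proposition \ref{LSigma-characterization} there is a countable family $\mathcal{D}$ of closed subsets of $Y$ covering $Y$ with $\bigcap\{D\in\mathcal{D}:\xi\in D\}$ metrizable for every $\xi\in Y$, and since $Y$ is closed in $KE$ every member of $\mathcal{D}$ is closed in $KE$. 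For the $X$‑part: put $V_n=[0,1]\times[1/n,1]$ for $n\geq 1$; each $V_n$ is a closed subset of the metrizable space $X$, hence metrizable, and $\bigcup_{n}V_n=[0,1]\times(0,1]=X$.

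Next I would check the one point that genuinely uses the topology of $KE$, namely that each $V_n$ is closed in $KE$. Every basic neighborhood of a point of $Y$ contains a set of the form $U_m(z,a,b)$, and by construction $U_m(z,a,b)\subseteq[0,1]\times(0,1/m)$; hence for $m\geq n$ such a basic neighborhood is disjoint from $V_n$, so no point of $Y$ lies in the closure of $V_n$, and since $V_n$ is already closed in the open set $X$ we get that the closure of $V_n$ in $KE$ equals $V_n$. Thus $\mathcal{C}=\mathcal{D}\cup\{V_n:n\geq 1\}$ is a countable cover of $KE$ by sets closed in $KE$. Finally, for $\xi\in KE$ I would compute $\bigcap\{C\in\mathcal{C}:\xi\in C\}$: if $\xi\in X$ then $\xi$ lies in no member of $\mathcal{D}$ (these are contained in $Y$), so the intersection is $\bigcap\{V_n:\xi\in V_n\}$, a closed subset of the metrizable space $V_1$ and hence metrizable; if $\xi\in Y$ then $\xi$ lies in no $V_n$ (these are contained in $X$), so the intersection is $\bigcap\{D\in\mathcal{D}:\xi\in D\}$, metrizable by the choice of $\mathcal{D}$. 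By Proposition \ref{LSigma-characterization}, $KE$ is an $L\Sigma(\leq\omega)$‑space, which completes the proof of the corollary.

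I do not expect a serious obstacle here: the hard work was carried out in Theorem \ref{ExampleE is not KLS}, and the space $KE$ was set up precisely so that its metrizable open piece $X$ (exhausted by the closed metrizable rectangles $V_n$) and its metrizably fibered closed piece $Y$ fit together without the two halves of the cover interfering. The only step demanding any care is the verification that the $V_n$ are closed in $KE$, i.e.\ that the ``spiral'' neighborhoods $U_m(z,a,b)$ of points of $Y$ hug the bottom edge $[0,1]\times\{0\}$ — which is immediate from their definition.
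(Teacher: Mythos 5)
Your decomposition of the cover into a part for $Y$ and the rectangles $V_n=[0,1]\times[1/n,1]$ for $X$ is sound, and it is essentially the paper's route (the paper simply notes that $X$ and $Y$ are $L\Sigma(\leq\omega)$-spaces, citing \cite{KOS}, and that a union of two such subspaces is again $L\Sigma(\leq\omega)$, rather than writing the cover out). The genuine gap is in your justification of the $Y$-part. In this section $Y=[0,1]\times\mathbb{S}$ is the \emph{lexicographic} product, not the topological product, and the map forgetting the signs is not continuous from $Y$ to $[0,1]^2$ with the product topology: if $z_n\nearrow x$, then $\langle z_n,1^-\rangle\to\langle x,0^+\rangle$ in $Y$, while the images $(z_n,1)$ converge to $(x,1)\neq(x,0)$. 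Worse, no other witnessing map exists: $Y$ is not metrizably fibered at all. The argument of Proposition \ref{KD:proposition1} applies verbatim, since for any strictly monotone sequence $t_n\to p$ in $[0,1]$ \emph{all} points of the fibers $\{t_n\}\times\mathbb{S}$ converge to the single point $\langle p,0^+\rangle$ (respectively $\langle p,1^-\rangle$); hence any continuous map of $Y$ into a metrizable space is constant on all but countably many fibers $\{x\}\times\mathbb{S}$, and so has a point-inverse containing a copy of $\mathbb{S}$. So the family $\mathcal{D}$ cannot be obtained from metrizable fiberedness of $Y$.

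What you actually need is only the weaker (and true) statement that $Y$ is an $L\Sigma(\leq\omega)$-space; this is precisely what the paper invokes \cite{KOS} for. If you prefer a self-contained argument, you can exhibit $\mathcal{D}$ directly: take the order-closed intervals $\{\langle x,s\rangle: p\le x\le q\}$ for rationals $p<q$, together with the closures in $Y$ of the strips $[0,1]\times\{s\in\mathbb{S}:s\le q^-\}$ and $[0,1]\times\{s\in\mathbb{S}:s\ge q^+\}$ for rational $q$; these closures only add the points $\langle x,1^-\rangle$, respectively $\langle x,0^+\rangle$, and the resulting diagonal intersection at any $\langle x,t^{\pm}\rangle$ is contained in the finite set $\{x\}\times\{0^+,t^-,t^+,1^-\}$, hence metrizable by Proposition \ref{LSigma-characterization}. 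With $\mathcal{D}$ so repaired, the rest of your argument — the closedness of the $V_n$ in $KE$ (their $X$-parts avoid the neighborhoods $U_m$, which lie in $[0,1]\times(0,1/m)$), the disjointness of the two halves of the cover, and the reduction of the remaining claims to the construction of the topology and to Theorem \ref{ExampleE is not KLS} — is correct.
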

\begin{proof}
By definition of the topology, the space $KE$ is compact and first-countable. Recall that $X=[0,1]\times (0,1]$ and $Y=[0,1]\times \mathbb{S}$. Both $X$ and $Y$ are $L\Sigma(\leq\omega)$-spaces (cf. \cite[Proposition 2.5]{KOS}) so $KE$ is an $L\Sigma(\leq\omega)$-space being the union of $X$ and $Y$ (see \cite[Proposition 2.4]{KOS}). According to Theorem \ref{ExampleE is not KLS}, the space $KE$ is as required.
\end{proof}

\section*{Acknowledgements}
The authors were partially supported by
Fundaci\'{o}n S\'{e}neca - ACyT Regi\'{o}n de Murcia project 21955/PI/22, Agencia Estatal de Investigación (Government of Spain) and ERDF project PID2021-122126NB-C32 (A. Avil\'es and M. Krupski); European Union - NextGenerationEU funds through Mar\'{i}a Zambrano fellowship and
the NCN (National Science Centre, Poland) research Grant no. 2020/37/B/ST1/02613 (M. Krupski)

\bibliographystyle{siam}
\bibliography{bib.bib}
\end{document}